% ETDSdoc.tex V1.0, 5 March 1999

\documentclass[a4paper,11pt]{amsart}

\usepackage{graphicx}

\usepackage{amssymb}
\usepackage{amsfonts}
\usepackage{indentfirst}
\usepackage{graphicx, graphics}
\usepackage{pst-plot}

\def\epsi{\varepsilon}
\def\mup{{\mu_\phi}}

\def\N{\mathbb{N}}
\def\rr{\mathbb{R}}

\def\Lkap{\mathcal{L}^\delta(x)}
\def\Fkap{\mathcal{F}^\delta(x)}
\def\C{\mathcal{C}}

\def\e{\varepsilon}

\def\muphi{\mu_{\phi}}
\def\un\mathbb{1}
\def\zu{[0,1]}
\def\mk{\medskip}
\def\sk{\smallskip}

\newtheorem{theorem}{Theorem}[section]
\newtheorem{lemma}[theorem]{Lemma}%[section]
\newtheorem{proposition}[theorem]{Proposition}%[section]
\newtheorem{corollary}[theorem]{Corollary}%[section]
\newtheorem{remark}[theorem]{Remark}
\newtheorem{definition}[theorem]{Definition}%[section]
%[section]
%[section]

\begin{document}

\title[Diophantine approximation by  orbits of Markov maps]{Diophantine approximation \\ by orbits of expanding Markov maps}

\author{Lingmin Liao}
\address{LAMA,  CNRS UMR 8050,
Universit\'e Paris-Est   Cr\'eteil, 61 Avenue du
G\'en\'eral de Gaulle, 94010 Cr\'eteil Cedex, France}
\email{lingmin.liao@u-pec.fr}

\author{St\'ephane Seuret}
\address{LAMA, CNRS UMR 8050,
Universit\'e Paris-Est  Cr\'eteil, 61 Avenue du
G\'en\'eral de Gaulle, 94010 Cr\'eteil Cedex, France}
\email{seuret@u-pec.fr}

\begin{abstract} In 1995, Hill and Velani   introduced the ``shrinking targets" theory. Given a dynamical system $(\zu,T)$, they investigated   the Hausdorff dimension  of sets of points whose orbits are close to some fixed point. In this paper, we   study the sets of points well-approximated by orbits $\{T^n
x\}_{n\geq 0}$, where $T$ is an expanding Markov map with a finite
partition supported by  $\zu$. The  
dimensions of these sets are described using the multifractal properties of 
invariant Gibbs measures.  \end{abstract}

\maketitle
%%%%%%%%%%%%%%%%%%%%%%%%%%%%%%%%%%%%%%
%%%%%%%%%%%%%%%%%%%%%%%%%%%%%%%%%%%%%%
%%%%%%%%%%%%%%%%%%%%%%%%%%%%%%%%%%%%%%
%%%%%%%%%%%%%%%%%%%%%%%%%%%%%%%%%%%%%%
%%%%%%%%%%%%%%%%%%%%%%%%%%%%%%%%%%%%%%
%%%%%%%%%%%%%%%%%%%%%%%%%%%%%%%%%%%%%%
%%%%%%%%%%%%%%%%%%%%%%%%%%%%%%%%%%%%%%
%%%%%%%%%%%%%%%%%%%%%%%%%%%%%%%%%%%%%%

\section{Introduction}

Let $(X, d)$ be a compact metric space and $T: X \rightarrow X $ a
piecewise continuous  transformation.   Let
$\mathcal{O}(x)= \{ T^n x: n\in \N\}$ be the orbit of $x\in X$. The distribution of $\mathcal{O}(x)$, in particular its density over $X$, is a historical issue,
which goes back   to   Poincar\'e's results.
In 1995, Hill and Velani \cite{HV} introduced the shrinking targets theory, which aims at investigating the Hausdorff
dimensions of sets of points whose orbits are close to some
fixed point. For a   point $y\in X$, they studied the set
\begin{equation}\label{set-HV}
   \Big\{ x\in X : \quad T^nx\in B(y, r_n) \quad \text{for infinitely many integers } n\in \mathbb{N} \
   \Big\},
\end{equation}
where $B(x, r)$ stands for the ball of radius $r>0$ centered at $x
\in X$ and $(r_n)_{n\geq 1}$ is a sequence of positive real numbers
converging to $0$. In this article, we adopt a complementary point of view: we
fix a point $x\in X$ and consider the set of points $y$
well-approximated by the orbit $\mathcal{O}(x)$ of $x$, i.e. we
focus on  
\begin{equation}\label{set-FST}
   \Big\{ y\in X :\quad T^nx\in B(y, r_n) \quad \text{for infinitely many integers } n\in \mathbb{N} \
   \Big\},
\end{equation}
which can be written as $ \displaystyle  \limsup_{n\to \infty} B(T^n x,
r_n)=\bigcap_{N\geq 1} \ \bigcup_{n\geq N} B(T^n x, r_n).$

%
%(in our context, $B(x, r)$ is an interval). our goal is to
%investigate the Hausdorff dimension of the set of the points which
%belong to an infinite number of balls $B(T^n x, r_n)$, i.e. of the
%set

%The set (\ref{set-FST})

In fact, many questions can be asked about the set %(\ref{set-HV}) and 
(\ref{set-FST}): for which sequence $(r_n)_{n\geq 1}$
does it cover  the whole interval $\zu$? When $\zu$ is not fully
covered, what is its Hausdorff dimension? Can  the dependence on $x$ be quantified?
Answering these questions provides us with a  precise description of  the distribution properties of the orbit $ \mathcal{O}(x)$.
%When $T$ is an expanding Markov map with finite
%partitions supported by the whole interval $\zu$,  we will compute  the value of the
%Hausdorff dimensions of sets (\ref{set-FST})  for  real numbers $x$ which are typical points for
%$T$-invariant Gibbs measures associated with any H\"older potential.
Such questions have been investigated in several contexts, and can
be interpreted as general Diophantine approximation problems.
Indeed, the classical Diophantine questions concern the dimension of
the set \begin{equation}  \label{defsdelta}  \mathcal{S}(\delta) =
\left\{y\in \zu:  \left|y-\frac{p}{q} \right| \leq
\frac{1}{q^{2\delta}} \mbox{ for infinitely many couples
$(p,q)=1$}\right\}.
\end{equation}
This set  can again be seen as a limsup set  $\displaystyle \limsup_{q\to +\infty} \ \bigcup_{p\in \mathbb{Z}} B(p/q, 1/q^{2\delta})$.

The work \cite{HV} is precursor on this subject in the dynamical
setting, and thereafter, many people studied  sets of the form
(\ref{set-HV}). For instance, see  \cite{KIM} for the case where $T$
is an irrational rotation on the torus $\mathbb{T}^1$. In the
literature, one often refers  to these results as shrinking targets problems  or
dynamical Borel-Cantelli lemma. The paper \cite{FST} by Fan,
Schmeling and Troubetzkoy, where the doubling map on $\mathbb{T}^1$
is studied, is the first one to consider the set (\ref{set-FST}).
These studies are also related to many other famous works concerned
with metric theory of Diophantine approximation, see
\cite{JWS,DODVEL,HV2,KM,KM2,BV1,EKL} and references therein.

In this work, we focus on the study of the set (\ref{set-FST}) when
$T$ is an expanding Markov map of the interval $\zu$ with a finite
partition - Markov map, for short.

%%%%%%%%%%%%%%%%%%%%%%%%%%%%%%%%%%%%%%
\begin{definition}[Markov map]\label{defmarkov}
A transformation $T: [0,1] \rightarrow [0,1]$ is an expanding Markov
map with a finite partition if there is a subdivision $
\{a_i\}_{0\leq i\leq Q} $ of $[0,1]$ (denoted by
$I(k)=]a_k,a_{k+1}[$ for $0\leq k \leq Q-1 $) such that:
\begin{enumerate}
\item  there is an integer $n$ and a real number $\rho$ such that $|(T^n)'| \geq \rho
>1$;
\item   $T$ is strictly monotonic and can be extended to a $C^2$
function on each $\overline{I(i)}$;
\item   if $I(j) \cap T(I(k)) \neq \emptyset $, then $I(j) \subset
T(I(k))$;
\item  there is an integer $R$ such that $I(j) \subset \cup_{n=1}^{R} T^n(I(k))
$ for every $k,j$; and
\item  for every $k\in\{0,\cdots, Q-1\}$,  $\sup_{ (x,y,z)\in I(k)^3}\frac{|T''(x)|}{|T'(y)||T'(z)|}<
\infty.$
\end{enumerate}
\end{definition}
%%%%%%%%%%%%%%%%%%%%%%%%%%%%%%%%%%%%%%

 It appears that for Markov maps,
the relevant choice for the sequence $(r_n)_{n\geq 1}$ is $r_n =
1/n^\delta$, for $\delta>0$. We thus introduce the sets
\begin{eqnarray*}
     \mathcal{L}^{\delta}(x) & :
     =  &\limsup_{n\to \infty} B(T^n x,       n^{-\delta}),\\
           \mathcal{F}^{\delta}(x)  &:      = & \zu \setminus \mathcal{L}^{\delta}(x),
\end{eqnarray*}
which are   the set of points covered by infinitely many
balls $B(T^n x,  n^{-\delta})$, and its complement. We study  the  Hausdorff dimension of  $\mathcal{L}^{\delta}(x)$ and  $
\mathcal{F}^{\delta}(x)$.

Before stating our main theorem, we give some recalls on multifractal analysis.
 Denote by $Leb$ the  Lebesgue measure in $\mathbb{R}$, and by $\dim $ the Hausdorff dimension.
%%%%%%%%%%%%%%%%%%%%%%%%%%%%%%%%%%%%%%
\begin{definition}
For any Borel  probability measure $\mu$ on $\zu$, define the lower  ({\em resp.} upper) local dimension $\underline{ d}_\mu(y)$ ({\em resp.} $\overline{ d}_\mu(y)$) of  $\mu $ at $y\in \zu$ by
$$
\underline{ d}_\mu(y) := \liminf_{r\to 0} \frac{\log \mu(B(y,r))}{\log r} \ \mbox { and } \  \overline{ d}_\mu(y) := \limsup_{r\to 0} \frac{\log \mu(B(y,r))}{\log r}.$$
When $\underline{ d}_\mu(y) =  \overline{ d}_\mu(y) $, their common value is denoted by $ d_\mu(y)$, and is simply called the local dimension of $\mu$ at $y$.
The level sets of the local dimension are%and the lower local dimension 
%\begin{equation}
%\label{defemu}
%\underline{\mathcal{E}}_{\mu} (\alpha) = \left\{y \in \zu: \underline{d}_\mu(y)=\alpha
%\right\} \ \mbox{ and } \  {\mathcal{E}}_{\mu} (\alpha) = \left\{y \in \zu: d_\mu(y)=\alpha
%\right\}.
%\end{equation}
\begin{equation}
\label{defemu}
 {\mathcal{E}}_{\mu} (\alpha) = \left\{y \in \zu: d_\mu(y)=\alpha
\right\}, \quad \alpha\geq 0.
\end{equation}
Finally, the {\em multifractal spectrum} of $\mu$, is defined as  the application
  $$ D_\mu : \alpha \geq 0 \ \longmapsto \ \dim  \,  {\mathcal{E}}_{\mu} (\alpha).$$
\end{definition}
%%%%%%%%%%%%%%%%%%%%%%%%%%%%%%%%%%%%%%

Denote by $\mathcal{M}_{\rm inv} $ ({\em resp.} $\mathcal{M}_{\rm erg} $ ) the set of $T$-invariant  ({\em resp.} ergodic) 
probability measures on $\zu$. The dimension of a Borel probability measure $\mu$ is
defined as
$$\dim  \mu = \inf \{ \dim  E: E \mbox { Borel set $\subset \zu$ and } \mu(E)>0\} .  $$

We  prove the following theorem (for precise definitions, see Section \ref{sec:def}).

%%%%%%%%%%%%%%%%%%%%%%%%%%%%%%%%%%%%%%
%%%%%%%%%%%%%%%%%%%%%%%%%%%%%%%%%%%%%%
\begin{theorem}\label{main-thm1}
Let $T:\zu\to \zu$ be an expanding Markov map.   Let $\phi$ be a
H\"{o}lder continuous potential  and let $\mu_{\phi}$ be the
corresponding Gibbs measure.  Let
\begin{equation}
\label{defalphamax}  \alpha_{+}:= \max_{\mu\in \mathcal{M}_{\rm inv}}
{\int_{\zu}   (-\phi) \, d\mu \over \int_{\zu}    \log |T'|  \,  d\mu} \ \mbox { and } \ \alpha_{\max}:= {  {\int_{\zu}     (-\phi) \, d
\mu_{\max}}  \over \int_{\zu}    \log |T'|  \, d{\mu_{\max}}},
\end{equation}
where $\mu_{\max}$ is the Gibbs measure associated with the
potential $\psi=-\log|T'|$.

\begin{enumerate}
\item  For $\mu_{\phi}$-almost every $x \in
\zu$,  the Hausdorff dimension of $\mathcal{L}^\delta(x)$ is
\begin{equation}
\label{resultat}   \dim  \mathcal{L}^\delta(x)  = \left\{ \begin{array}{ll}
                           {1}/{\delta} & \mbox{\rm if} \ \ 0< {1}/{\delta}\le \dim {\mu_\phi},\sk  
                                \\
                          D_\mup( {1}/{\delta}) & \mbox{\rm if} \ \
                           \dim {\mu_\phi}< {1}/{\delta}\leq
                           \alpha_{\max}, \sk  \\
                          1& \mbox{\rm if} \ \   {1}/{\delta}   >  \alpha_{\max}.
                                \\
                          \end{array} \right.
\end{equation}
\item   For $\mu_{\phi}$-almost every $x \in
\zu$,  the Hausdorff dimension of $\mathcal{F}^\delta(x)$ is
\begin{equation}
\label{resultat2}    \dim  \mathcal{F}^\delta(x)  = \left\{ \begin{array}{ll}
                        1  & \mbox{\rm if} \ \ 0<{1}/{\delta} \leq  \alpha_{\max}, \sk \\
                           D_\mup({1}/{\delta})   & \mbox{\rm if} \ \  {1}/{\delta}   >  \alpha_{\max}. 
                          \end{array} \right.
\end{equation}
%In particular, for every $1/\delta>\alpha_+$, $\mathcal{F}^\delta(x)  = \emptyset$.
 \item
Concerning the Lebesgue measure of $\mathcal{L}^\delta(x) $ and $\mathcal{F}^\delta(x) $, we have:
 \begin{equation}
\label{resultat3} \!\!\!\!\!\!\!  \mbox{for $\mu_{\phi}$-a.e. $x$, }   \ Leb(\mathcal{L}^\delta(x)) =1- Leb(\mathcal{F}^\delta(x))= \left\{ \begin{array}{ll}
                           0 & \mbox{\rm if} \ \ 0<  {1}/{\delta}  <
                           \alpha_{\max}, \sk\\
                          1& \mbox{\rm if} \ \  {1}/{\delta}  >  \alpha_{\max}.                              \\
                          \end{array} \right.
\end{equation}

\item If ${1}/{\delta}> \alpha_{+}$, then
$\mathcal{F}^\delta(x)=\emptyset$ and hence $\Lkap=\zu$.

\end{enumerate}
\end{theorem}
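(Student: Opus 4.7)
The proof hinges on a multifractal identification of $\Lkap$ under the ergodic dynamics of $(T,\muphi)$. For a H\"older potential $\phi$, the spectrum $D_{\muphi}$ is the Legendre transform of the pressure function $\tau(q)$ defined implicitly by $P(q\phi - \tau(q)\log|T'|)=0$; it is concave on $[\alpha_-,\alpha_+]$ with $D_{\muphi}(\dim\muphi)=\dim\muphi$ and $D_{\muphi}(\alpha_{\max})=1$. To each $\alpha\in[\alpha_-,\alpha_+]$ is attached an auxiliary Gibbs measure $\mu_\alpha$ (associated with the potential $q\phi-\tau(q)\log|T'|$ where $\tau'(q)=\alpha$), concentrated on $\mathcal{E}_{\muphi}(\alpha)$ and of dimension $D_{\muphi}(\alpha)$. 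The Hofbauer--Keller formula identifies the $\muphi$-local dimension at $\nu$-typical points as $\int(-\phi)\,d\nu / \int\log|T'|\,d\nu$ for any invariant $\nu$, so that $d_{\muphi}(y)=\alpha_{\max}$ for $Leb$-a.e.\ $y$ and $\overline{d}_{\muphi}(y)\le\alpha_+$ for every $y$. The driving heuristic is that along the $\muphi$-typical orbit of $x$, the events $\{T^n x \in B(y,n^{-\delta})\}$ have probabilities $\muphi(B(y,n^{-\delta}))\sim n^{-\delta d_{\muphi}(y)}$ and, thanks to exponential mixing of $(T,\muphi)$ on the Markov partition, enjoy enough quasi-independence to obey a Borel--Cantelli dichotomy: $y\in\Lkap$ if and only if $d_{\muphi}(y)\le 1/\delta$.

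\textbf{Items (3)--(4) and the upper bounds.} Part (4) follows from running the divergence Borel--Cantelli uniformly in $y$: when $1/\delta>\alpha_+$ every $y$ satisfies $\overline{d}_{\muphi}(y)\le\alpha_+<1/\delta$, so $\sum_n \muphi(B(y,n^{-\delta}))$ diverges, and uniform mixing forces $y\in\Lkap$ for all $y$ outside a single $\muphi$-null set of $x$. Part (3) is the same dichotomy applied at $\alpha=\alpha_{\max}$, exploiting $d_{\muphi}=\alpha_{\max}$ for $Leb$-a.e.\ $y$. For the upper bounds, the trivial covering gives $\dim\Lkap\le\min(1,1/\delta)$, which is already sharp when $1/\delta\le\dim\muphi$. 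In the intermediate regime $\dim\muphi<1/\delta\le\alpha_{\max}$ I would refine this by partitioning the orbit indices $n$ according to the dyadic local dimension of $T^n x$: centres falling in $\mathcal{E}_{\muphi}(\alpha\pm\epsilon)$ occur with frequency $\sim D_{\muphi}(\alpha)$ along the orbit, which allows an $s$-content bookkeeping with $s=D_{\muphi}(1/\delta)+\epsilon$. For $\dim\Fkap\le D_{\muphi}(1/\delta)$ when $1/\delta>\alpha_{\max}$, the convergence Borel--Cantelli shows that $\Fkap$ is contained, up to a set of Hausdorff dimension at most $D_{\muphi}(1/\delta)$, in $\{y:d_{\muphi}(y)\ge 1/\delta\}$, whose dimension is $\sup_{\alpha\ge 1/\delta}D_{\muphi}(\alpha)=D_{\muphi}(1/\delta)$ because $D_{\muphi}$ decreases past $\alpha_{\max}$.

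\textbf{Lower bounds and main obstacle.} In the intermediate regime for $\Lkap$, applying the divergence Borel--Cantelli to the auxiliary measure $\mu_{1/\delta}$ along a $\muphi$-typical orbit yields $\mu_{1/\delta}(\Lkap)=1$, whence $\dim\Lkap\ge\dim\mu_{1/\delta}=D_{\muphi}(1/\delta)$; the mirror argument on the convergence side, applied to $\mu_\alpha$ with $\alpha\searrow 1/\delta$, produces $\dim\Fkap\ge D_{\muphi}(1/\delta)$ when $1/\delta>\alpha_{\max}$. When $1/\delta\le\dim\muphi$, the same approach only gives $\dim\Lkap\ge D_{\muphi}(1/\delta)<1/\delta$; to reach the sharp value $1/\delta$ I would invoke the Mass Transference Principle of Beresnevich--Velani, starting from the critical scale $\delta_0=1/\dim\muphi$ at which the harmonic series $\sum\muphi(B(y,n^{-\delta_0}))\sim\sum n^{-1}$ diverges and thus $\muphi(\mathcal{L}^{\delta_0}(x))=1$; MTP then transfers this full $\muphi$-mass into the lower bound $\dim\Lkap\ge\dim\muphi\cdot(\delta_0/\delta)=1/\delta$. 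The remaining items are immediate: $\dim\Lkap=1$ when $1/\delta>\alpha_{\max}$ from part (3), and $\dim\Fkap=1$ when $1/\delta<\alpha_{\max}$ from $Leb(\Fkap)=1$, the boundary $1/\delta=\alpha_{\max}$ being settled by monotonicity in $\delta$. I expect the \emph{main obstacle} to be the MTP step, since $\muphi$ is only multifractal rather than Ahlfors regular: before invoking MTP one must restrict to a Borel set of full $\muphi$-measure on which the local dimension is constantly $\dim\muphi$ and $\muphi$ is globally doubling, then run a Gibbs-adapted version of MTP inside that set. A secondary difficulty is the quasi-independence required for the Borel--Cantelli dichotomies uniformly along $\muphi$-typical orbits, which is where the exponential mixing of the Gibbs shift and the finite Markov partition structure enter crucially.
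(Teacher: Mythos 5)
Your high-level plan is close to the paper's: treat $\Lkap$ via the hitting time $R(x,y)$ and its coincidence with $d_{\muphi}(y)$, use auxiliary Gibbs measures $\mu_q$ concentrated on level sets to get lower bounds, invoke exponential mixing for the quasi-independence needed in the Borel--Cantelli dichotomies, and deduce items (3) and (4) from the local dimension being $\alpha_{\max}$ Lebesgue-a.e.\ and at most $\alpha_+$ everywhere. The decomposition into four regimes, the use of $\mu_q$ for the intermediate lower bound, and the concavity of $D_\mup$ all match the paper's route.

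The genuine gap is the Part I lower bound ($1/\delta\le\dim\mup$). You propose the Mass Transference Principle of Beresnevich--Velani, transferred from a full-$\mup$-measure statement at the critical exponent $\delta_0=1/\dim\mup$. MTP, however, is a theorem about Lebesgue (or Ahlfors-regular) measure: its hypothesis is $\mathrm{Leb}(\limsup B(x_n,r_n^{s/d}))=1$. Here $\mathrm{Leb}(\mathcal{L}^{\delta_0}(x))=0$ as soon as $1/\delta_0<\alpha_{\max}$, so the premise of MTP is simply false, and restricting to a Borel set carrying $\mup$ does not repair this: on that set $\mup$ is not Ahlfors regular — the local dimension equals $\dim\mup$ only up to iterated-logarithm corrections — and there is no off-the-shelf ``Gibbs-adapted MTP''. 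The paper instead uses the heterogeneous ubiquity theorem of Barral--Seuret, which is precisely a mass-transference statement built for Gibbs/quasi-Bernoulli measures: it takes as input $\mup(\mathcal{L}^{\delta_0}(x))=1$ together with the controlled scaling $r^{\dim\mup+\chi(r)}\le\mup(B(y,r))\le r^{\dim\mup-\chi(r)}$ (Theorem~\ref{thhypotheses}, with $\chi$ coming from the almost-sure invariance principle), and outputs the lower bound $\dim\mathcal{L}^{\zeta\delta_0}(x)\ge\dim\mup/\zeta$. Your arithmetic is right, but the tool is wrong; you have correctly located the hard step but not the correct instrument.

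A secondary imprecision concerns the Part II upper bound. You propose partitioning the indices $n$ by the local dimension of the \emph{centres} $T^n x$. For $\mup$-typical $x$, those centres all share the same asymptotic local dimension $\dim\mup$, so this partition is degenerate. What the paper actually sorts is the \emph{targets}: it decomposes $\mathcal{R}_{\le s}(x)$ according to $\underline{d}_{\mup}(y)\le s$ or $>s$, covers the second piece by the basic intervals $C\in\mathcal{C}_n(h',h'')$ with small $\mup$-mass, and uses the small-hitting Lemma~\ref{Small hitting} to show the orbit of a typical $x$ cannot visit too many such intervals within time $2^{an}$. The resulting count $2^{n(D_\mup(a)+\epsi)}$ drives the covering estimate. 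This reorientation (sorting targets rather than centres) is essential and missing in your sketch. Likewise, the uniform-in-$y$ deterministic inclusions $\mathcal{R}_{\ge s}(x)\subset\mathcal{E}_{\ge s}$ (Proposition~\ref{III-u}) need the big-hitting Lemma~\ref{Big-hitting}; Fubini via Galatolo's theorem gives only an almost-everywhere dichotomy, which suffices for the lower bounds via $\mu_q$ but not for the Hausdorff-dimension upper bounds.
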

%%%%%%%%%%%%%%%%%%%%%%%%%%%%%%%%%%%%%%
%%%%%%%%%%%%%%%%%%%%%%%%%%%%%%%%%%%%%%

%%%%%%%%%%%%%%%%%%%%%%%%%%%%%%%%%%%%%%
%%%%%%%%%%%%%%%%%%%%%%%%%%%%%%%%%%%%%%

\begin{remark}
The dimensions of $\mathcal{L}^{1/\alpha_{\max}} (x)$ and   $\mathcal{F}^{1/\alpha_{\max}}(x)$ are $1$, but we do not know
 their Lebesgue measure. We also  prove that
  Leb($\mathcal{L}^{1/\alpha_{+}} (x))=1$  and $\dim  \mathcal{F}^{1/\alpha_{+}} (x) \leq \lim_{1/\delta \to
  \alpha_+} D_\mup(1/\delta)$, but we do not know if  $\mathcal{L}^{1/\alpha_{+}}(x)=\zu$.
\end{remark}
%%%%%%%%%%%%%%%%%%%%%%%%%%%%%%%%%%%%%%
%%%%%%%%%%%%%%%%%%%%%%%%%%%%%%%%%%%%%%

%%%%
%%%%%%
%%%%%%%%%%%%%
%%%%%%%%%%%%%%%%%%
%%%%%%%%%%%%%%%%%%%%%%%%%%
%%%%%%%%%%%%%%%%%%%%%%%%%%%%%%%%%%
%%%%%%%%%%%%%%%%%%%%%%%%%%%%%%%%%%%%%%%%
%%%%%%%%%%%%%%%%%%%%%%%%%%%%%%%%%%%%%%%%%%%%%%%%%%%%%%
%%%%%%%%%%%%%%%%%%%%%%%%%%%%%%%%%%%%%%%%%%%%%%%%%%%%%%%%%%%
%
%We are also able to characterize the size of $\Fkap$ (the results are complementary to those of Theorem \ref{main-thm1}).

%%%%%%%%%%%%%%%%%%%%%%%%%%%%%%%%%%%%%%
%%%%%%%%%%%%%%%%%%%%%%%%%%%%%%%%%%%%%%
%\begin{theorem}\label{main-thm2}
%Under the same assumptions as in Theorem \ref{main-thm1}, we have
%\begin{enumerate}
%\item
%If $0\leq 1/\delta < \alpha_{\max}$, then $Leb(\mathcal{F}^\delta(x))=1$ (but  $\mathcal{F}^\delta(x)\neq \zu$).
%
%
%\sk\item
%If $\alpha_{\max} \leq 1/\delta <\alpha_{+}$, then  $\dim  \mathcal{F}^\delta(x) = D_\mup(1/\delta)$ (hence  $Leb(\mathcal{F}^\delta(x))=0$).
%
%
%\item
%If $\frac{1}{\delta}> \alpha_{+}$, then $\mathcal{F}^\delta(x)=\emptyset$.
%\end{enumerate}
%
%\end{theorem}
%%%%%%%%%%%%%%%%%%%%%%%%%%%%%%%%%%%%%%
%%%%%%%%%%%%%%%%%%%%%%%%%%%%%%%%%%%%%%

 The mapping $ 1/\delta \longmapsto \dim  \mathcal{L}^\delta(x)$ exhibits clearly four distinct behaviors, respectively denoted by  Part I (${1}/{\delta}\le \dim {\mu_\phi}$), Part II ($\dim {\mu_\phi} < {1}/{\delta} \le \alpha_{\max}$), Part III ($\alpha_{\max} <{1}/{\delta} \le \alpha_{+}$) and finally Part IV (${1}/{\delta}> \alpha_{+}$). See Figure 1 (the definition of $\alpha_-$ is given in Theorem \ref{th_recall}).
 
 \begin{figure}
\begin{center}
\includegraphics[width=5.2cm,height =4.3cm]{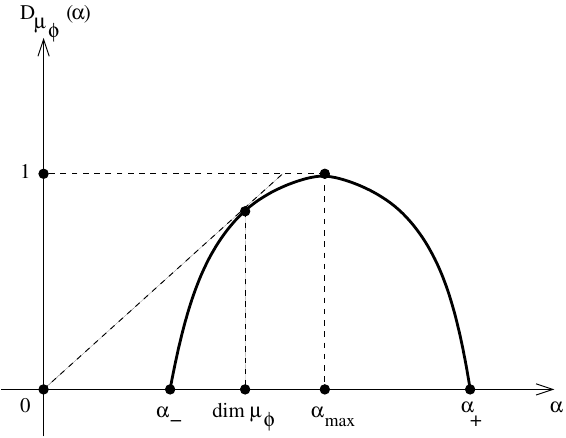}\\
\includegraphics[width=5.2cm,height =4.3cm]{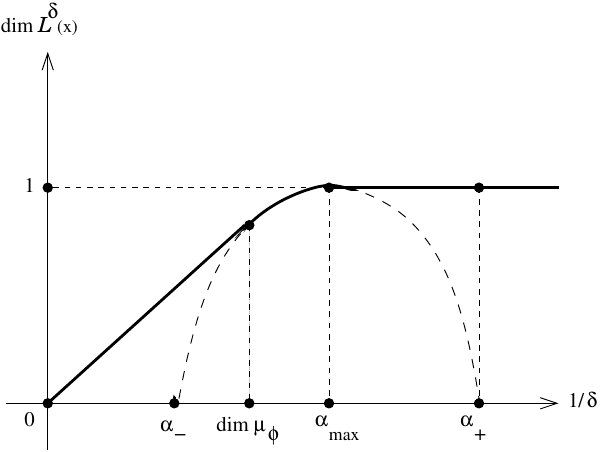}
\includegraphics[width=5.2cm,height =4.3cm]{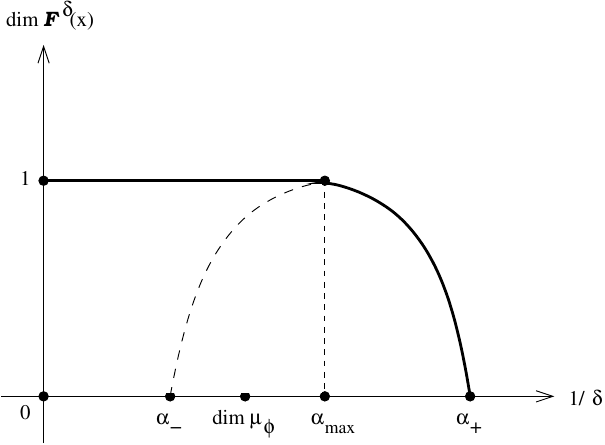}
\caption{Multifractal spectrum of  $\mup$,  and the  maps  $1/\delta \mapsto     \dim  \,\Lkap$ and $1/\delta \mapsto    \dim    \Fkap$.}\end{center} 
\vskip -8pt
\end{figure} 
 
  \sk

For $\mup$-typical $x$,  the behavior  of $ \mathcal{L}^\delta(x)$ enjoys two remarkable characteristics   compared to classical Diophantine approximation: 
  the map  $1/\delta\mapsto \dim  \mathcal{L}^\delta(x)$  may have a strictly concave part (Part II), and 
   the  smallest $\delta$ for which  $Leb \, ( \mathcal{L}^\delta(x))=1  $ and  the  smallest $\delta$ for which $  \mathcal{L}^\delta(x)= \zu$  do not coincide (Part III). 
This   contrasts with the classical Diophantine
approximation, especially the approximation by rationals. In this historical context, as said above, the  analog of
the sets $\mathcal{L}^\delta(x)$ are the sets $S(\delta)$ defined by
\eqref{defsdelta}. The Dirichlet theorem ensures that
$\mathcal{S}(1) = \zu$, and it is well-known \cite{JARNIK,BESIC}
that for every $\delta>1$, $ \dim  \mathcal{S}(\delta)
=1/\delta$. In particular, the dimension  of $\mathcal{S}(\delta)$
decreases linearly with respect to $1/\delta$, and as soon as the
Lebesgue measure of $\mathcal{S}(\delta) $ equals 1, it
instantaneously covers the whole interval $\zu$.  Comparable results
hold for sets of numbers approximated by other families, see for
instance \cite{BS,BS2,BUGEAUD,Bu,SchTro}. 
The two characteristics of  
$\mathcal{L}^\delta(x)$ mentioned above can  be interpreted by the
fact that, although the orbits  $\mathcal{O}(x)$ of
$\mup$-typical points $x$ are dense, they are not as regularly
distributed when $n$ tends to infinity as the rational numbers are.
The exponents $\dim \mup$, $\alpha_{\max}$ and $\alpha_+$
characterize this ``distortion".

%%%%%%%%%%%%%%%%%%%%%%%%%%%%%%%%%%%%%%
%%%%%%%%%%%%%%%%%%%%%%%%%%%%%%%%%%%%%%

% \mk
%We will respectively prove that for $\mu_{\phi}$-almost every point
%$x$, we have the results of Parts I, II, III and IV. To conclude
%that for $\mu_{\phi}$-almost every point $x$, we have the four parts
%simultaneously, we take the intersection of the four full
%$\mu_{\phi}$-measure sets.
  %\mk

% Observe that if we prove the formula \eqref{resultat} for $\delta$ in Parts I and  II, then the first
% item of Theorem \ref{main-thm2} follows. Similarly, (2) of Theorem  \ref{main-thm2} implies (2) of Theorem
% \ref{main-thm1},
% and items (3) of Theorem \ref{main-thm1} and Theorem
% \ref{main-thm2} are equivalent.

 \mk

The paper is organized as follows. Section \ref{sec:def} contains some recalls on multifractal analysis and  hitting time. Section \ref{sec:Hitting} describes the relations between
  hitting time,  approximation rate and  
local dimension of $\mup$. From these relations we will give a
direct proof of item 3. of Theorem \ref{main-thm1}. In Section
\ref{sechitt}, two key lemmas  are proved. They illustrate the fact
that  intervals which have a small local
dimension for $\mu_\phi$  are hit by the balls $B(T^n x, 1/n^\delta)$ with big
probability, and  {\em vice-versa}. Then, Sections \ref{sec-lower1},
\ref{secpart4}, \ref{secpart3} and \ref{secpart2} contains the
proofs of the upper and lower bounds for the Hausdorff dimensions of
$\mathcal{L}^\delta(x)$ and $\mathcal{F}^\delta(x)$ for Parts I, IV,
III and II, respectively.

\sk

%%%%%%%%%%%%%%%%%%%%%%%%%%%%%%%%%%%%%%
%%%%%%%%%%%%%%%%%%%%%%%%%%%%%%%%%%%%%%
 
Theorem \ref{main-thm1} is similar to the results    in \cite{FST} for the doubling map on
$\mathbb{T}^1$, but the proofs require other arguments. First, for $x\mapsto 2x$, since  the Lyapunov exponents are constant,  the balls of generation $n$ have   same lengths,  while for the
Markov maps their lengths may have different order.  Second, in
\cite{FST}, the authors focus on the Bowen's  entropy
spectrum.  Third, the notions of   H\"older exponent and   hitting time   for the
doubling map involve only cylinders, while we need   centered balls
to define similar quantities. 
Finally, some arguments (Lemmas \ref{th-multi-relation}, \ref{Big-hitting}
and \ref{Small hitting}) are adapted from   \cite{FST} to the
context of Markov maps, but several others do not. The
best example  is the difficult lower bound for  $ \dim 
\mathcal{L}^\delta(x)$ when $ 0< {1}/{\delta}\le
\dim {\mu_\phi}$.

%%%%%%%%%%%%%%%%%%%%%%%%%%%%%%%%%%%%%%
%%%%%%%%%%%%%%%%%%%%%%%%%%%%%%%%%%%%%%
  %%%%%%%%%%%%%%%%%%%%%%%%%%%%%%%%%%%%%%
%%%%%%%%%%%%%%%%%%%%%%%%%%%%%%%%%%%%%%
%%%%%%%%%%%%%%%%%%%%%%%%%%%%%%%%%%%%%%
%%%%%%%%%%%%%%%%%%%%%%%%%%%%%%%%%%%%%%
%%%%%%%%%%%%%%%%%%%%%%%%%%%%%%%%%%%%%%
%%%%%%%%%%%%%%%%%%%%%%%%%%%%%%%%%%%%%%
%%%%%%%%%%%%%%%%%%%%%%%%%%%%%%%%%%%%%%
\section{First definitions, and recalls on multifractal analysis} \label{sec:def}

%
% Precisely, given a reference measure $\mu$ on $X$, we ask for $\mu$ almost all points
%$x$, the measures (the other measure $\nu$ for measuring the size),
%and the Hausdorff dimensions of $\mathcal{L}^{\delta}(x)$ and $ \mathcal{F}^{\delta}(x)$.
%These problems for the doubling maps on the unit interval are fully
%studied in \cite{FST}. However, the Hausdorff dimension results in
%\cite{FST}, are essentially entropy ones, since the Hausdorff
%dimension is the corresponding entropy multiplied by a constant
%$\log 2 $. This constant is nothing but the Lyapunov exponents which
%are constant. In general, the Lyapunov exponents are not necessarily
%constant, which force us to consider the Hausdorff dimension version
%of the problems.

%In this paper, we will study the dynamical diophantine approximation
%problems for the expanding Markov maps of the interval $I=[0,1]$
%with finite partitions (expanding Markov maps, for short).

%%%%%%%%%%%%%%%%%%%%%%%%%%%%%%%%%%%%%%
%%%%%%%%%%%%%%%%%%%%%%%%%%%%%%%%%%%%%%
%%%%%%%%%%%%%%%%%%%%%%%%%%%%%%%%%%%%%%
%%%%%%%%%%%%%%%%%%%%%%%%%%%%%%%%%%%%%%
%%%%%%%%%%%%%%%%%%%%%%%%%%%%%%%%%%%%%%
%%%%%%%%%%%%%%%%%%%%%%%%%%%%%%%%%%%%%%
%%%%%%%%%%%%%%%%%%%%%%%%%%%%%%%%%%%%%%
\subsection{Covering of $\zu$ by basic intervals.}\label{coveringsection} 

Let $T$ be a Markov map defined in Definition \ref{defmarkov}.
With $T$ are associated generations of {\em
basic} intervals.
%%%%%%%%%%%%%%%%%%%%%%%%%%%%%%%%%%%%%%
\begin{definition}
Let $\mathcal{A} = \{0,1,\cdots, Q-1\}$. For every integer $n\geq
1$, we denote by $\mathcal{G}_n$ the set of {\em basic intervals} of
generation $n$ defined by
$$ \mbox{if } (i_1i_2 \cdots i_n)\in\mathcal{A}^n, \ \ I_{i_1i_2\cdots i_n} := I(i_1)\cap T^{-1}(I(i_2)) \cap \cdots \cap  T^{-n+1}(I(i_n)).$$
\end{definition}
%%%%%%%%%%%%%%%%%%%%%%%%%%%%%%%%%%%%%%

The following distortion property between intervals will be crucial:
there is a constant $L>1$ such that for every integer $n\geq 2$, for
every $(i_1i_2 \cdots i_n )\in\mathcal{A}^{n }$,
\begin{equation}
\label{distor} 1 \leq \frac{| I_{i_1i_2\cdots i_{n-1}} |}{ |
I_{i_1i_2\cdots i_{n-1} i_n} |} \leq L  \ \  \  \  \  \  (|I|\mbox{ is the length of the interval $I$).}
\end{equation}

It is obvious that the intervals   of a given
generation $n$ form a covering of $\zu$. This covering of $\zu$ is
not composed of intervals of the same length.
But using \eqref{distor}, for every real number $ 0<r<1$, one easily
shows that there is a   family of basic intervals $J_1, J_2,
\cdots, J_N$ (not belonging to the same $\mathcal{G}_n$'s) such that:
\begin{itemize}
\vspace{-2mm}\item
$\bigcup_{j=1}^N J_j = \zu$
\vspace{-3mm}\item
for every $j\neq j'$, the intersection of interiors
$\overset{\circ}{J_j} \cap \overset{\circ}{J_{j'}} $ is empty,
\vspace{-2mm}\item
for the same constant $L$ as in \eqref{distor}, 
 for every $j\in
\{1,2,\cdots, N\}$,
\begin{equation}
\label{covering} L^{-1}  r \leq |J_j| \leq L  r.
\end{equation}
\end{itemize}
\vspace{-2mm} For every $n\in
\mathbb{N}$, we fix $\mathcal{C}_n$ one possible   collection of  intervals such that (\ref{covering}) holds for
$r=2^{-n}$. From these considerations, one deduces  that there exists a number $L'>1 $ such
that for all $n\in\N$, the generation $n_{J}$ of a basic interval $J\in \C_n$
satisfies
\begin{equation}
\label{covering1} (L')^{-1}n\leq n_J \leq L'n.\end{equation}
%From now on, this sequence   $(\mathcal{C}_n)_{n\geq 1}$ is fixed, and will be used often in the
%sequel.
%

%%%%%%%%%%%%%%%%%%%%%%%%%%%%%%%%%%%%%%
%%%%%%%%%%%%%%%%%%%%%%%%%%%%%%%%%%%%%%
%%%%%%%%%%%%%%%%%%%%%%%%%%%%%%%%%%%%%%
%%%%%%%%%%%%%%%%%%%%%%%%%%%%%%%%%%%%%%
%%%%%%%%%%%%%%%%%%%%%%%%%%%%%%%%%%%%%%
%%%%%%%%%%%%%%%%%%%%%%%%%%%%%%%%%%%%%%
%%%%%%%%%%%%%%%%%%%%%%%%%%%%%%%%%%%%%%
\subsection{Definition of hitting time.}  Denote $\mathcal{O}^+(x) :=\mathcal{O}(x)
\setminus\{x\}.$

\begin{definition}
For every $(x,y)\in \zu^2$ and $r>0$, we define the hitting time (first
entrance time) of the orbit of $x$ into the ball $B(y,r)$ by
$$ \tau_r(x,y):= \inf\{n \ge 1: T^nx \in B(y,r)\}.$$
Then we set
\begin{equation}\label{R} R(x,y) := \liminf_{r \to 0} \frac{\log
\tau_r(x,y)}{-\log r}.
\end{equation}
\end{definition}
By convention, when $\mathcal{O}^+(x)\cap B(y,r)=\emptyset$,  we set $ \tau_r(x,y) = +\infty$ and  $R(x,y)
=+\infty$. 

\sk

We  define the hitting time $\tau(x,C)$ of a basic interval $C$ by a point
$x$. Let $m\geq 1$ and let $C\in \mathcal{G}_m$. If $\mathcal{O}(x) \cap C = \emptyset$,  we set $\tau(x,C)=+\infty$. Otherwise, 
\begin{equation}
\label{deftauxc} \tau(x,C):= \inf \{n\geq 0: \, T^n x \in C\}.
\end{equation}

\begin{definition}
\label{defsets}
Let $s\geq 0$ be a real number. We define the sets
\begin{eqnarray}
\label{def2}
  \!   \mathcal{R}_{\ge s}(x)   =   \{y \in [0,1]:   R(x,y) \ge s
     \}   , \ \    \mathcal{R}_{\le s}(x)   = \{y \in [0,1]:   R(x,y) \le s \}, \ \\
\nonumber    \!\!  \mathcal{R}_{> s}(x)   =   \{y \in [0,1]:   R(x,y) > s
     \}   , \ \     \mathcal{R}_{< s}(x)   = \{y \in [0,1]:   R(x,y) < s \}. \ 
\end{eqnarray}
\end{definition}

%%%%%%%%%%%%%%%%%%%%%%%%%%%%%%%%%%%%%%
%%%%%%%%%%%%%%%%%%%%%%%%%%%%%%%%%%%%%%
%%%%%%%%%%%%%%%%%%%%%%%%%%%%%%%%%%%%%%
%%%%%%%%%%%%%%%%%%%%%%%%%%%%%%%%%%%%%%
%%%%%%%%%%%%%%%%%%%%%%%%%%%%%%%%%%%%%%
%%%%%%%%%%%%%%%%%%%%%%%%%%%%%%%%%%%%%%
%%%%%%%%%%%%%%%%%%%%%%%%%%%%%%%%%%%%%%
\subsection{Multifractality of Gibbs measures.} Here are some facts on Gibbs measures.
 \label{multi} 

 %%%%%%%%%%%%%%%%%%%%%%%%%%%%%%%%%%%%%%
\begin{theorem}  [\cite{Bowen1975,Walters1978}]
  Let $T: I\rightarrow I$ be a Markov map. Then for any
  H\"{o}lder continuous function $\phi : I \rightarrow \mathbb{R} $,
  there exists a unique equilibrium state $\mu_\phi$ which
  satisfies the following
  Gibbs property: there exist constants $\gamma>0 $ and $P(\phi) $
(the topological pressure associated with $\phi$), such that 
\begin{eqnarray}\label{Gibbs}
\mbox{for
  any basic interval $I  \in \mathcal{G}_n$, }  \ \  \gamma^{-1}\leq \frac{\mu_\phi(I )}{e^{S_n\phi(x)-nP(\phi)}}\leq
  \gamma, \quad \forall x\in I ,
\end{eqnarray}
where $S_n\phi(x)=\phi(x)+\cdots + \phi(T^{n-1}x) $ is the $n$-th Birkhoff sum of $\phi$ at $x$.
\end{theorem}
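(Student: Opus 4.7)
The plan is to prove this through the Ruelle--Perron--Frobenius (transfer) operator machinery, which is the standard route for Gibbs measures on symbolic/Markov systems. Define the transfer operator $\mathcal{L}_\phi$ acting on Hölder continuous functions by
\[
\mathcal{L}_\phi f(x) = \sum_{T y = x} e^{\phi(y)} f(y).
\]
The Markov structure of $T$ means that, after identifying points with their itinerary sequences, $\mathcal{L}_\phi$ is conjugate to a transfer operator on a subshift of finite type, and the Hölder regularity of $\phi$ translates into a summable variation condition on the associated potential over cylinders.

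First I would establish the spectral data: apply a Perron--Frobenius / Ruelle theorem on an appropriate Banach space (Hölder or bounded variation, using the mixing property coming from conditions (3)--(4) in Definition~\ref{defmarkov}) to get a simple maximal positive eigenvalue $\lambda = e^{P(\phi)}$, a strictly positive Hölder eigenfunction $h$ with $\mathcal{L}_\phi h = \lambda h$, and a dual eigenmeasure $\nu$ with $\mathcal{L}_\phi^* \nu = \lambda \nu$, normalized so that $\int h \, d\nu = 1$. The equilibrium state is then defined by $d\mu_\phi = h \, d\nu$; the eigenvalue equations imply invariance of $\mu_\phi$ under $T$, while the variational principle (equating $h_\mu(T)+\int \phi\, d\mu$ against topological pressure) characterizes $\mu_\phi$ as the unique maximizer.

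Second, I would derive the Gibbs property \eqref{Gibbs}. For a basic interval $I_{i_1\cdots i_n} \in \mathcal{G}_n$, write $\mu_\phi(I_{i_1\cdots i_n}) = \int h \, \mathbf{1}_{I_{i_1\cdots i_n}} \, d\nu$ and use the identity
\[
\nu(I_{i_1\cdots i_n}) = \lambda^{-n} \int \mathcal{L}_\phi^n \mathbf{1}_{I_{i_1\cdots i_n}} \, d\nu,
\]
which after picking the unique inverse branch selecting the cylinder collapses to an integral against $e^{S_n \phi \circ \sigma_{i_1\cdots i_n}}$ where $\sigma_{i_1\cdots i_n}$ is the corresponding inverse branch on $I_{i_1\cdots i_n}$. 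Condition (5) of Definition~\ref{defmarkov} (the Rényi/Renyi-type bounded distortion) combined with the Hölder control on $\phi$ gives a uniform constant $\gamma$ for which the ratio $\mu_\phi(I_{i_1\cdots i_n})/e^{S_n\phi(x) - nP(\phi)}$ lies in $[\gamma^{-1},\gamma]$ for every $x \in I_{i_1\cdots i_n}$; the positivity bounds on $h$ get absorbed into $\gamma$.

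The main obstacle is establishing the spectral gap of $\mathcal{L}_\phi$ — equivalently, producing $h$ and $\nu$ and ruling out other eigenvectors of modulus $\lambda$. I would handle this either by Birkhoff's projective metric on a cone of log-Hölder functions (showing $\mathcal{L}_\phi$ is a contraction, from which existence, simplicity, and exponential decay follow in one stroke), or by a two-step argument isolating the leading eigenvector via equicontinuity of $\lambda^{-n}\mathcal{L}_\phi^n \mathbf{1}$ and then using mixing (ensured by Definition~\ref{defmarkov}(4)) to exclude peripheral spectrum. Uniqueness of $\mu_\phi$ among equilibrium states then follows from the strict convexity of pressure at $\phi$ when combined with the ergodicity of $\mu_\phi$, which itself is inherited from the spectral gap.
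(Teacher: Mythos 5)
The paper does not prove this theorem; it is cited from Bowen (1975) and Walters (1978), and your proposal reconstructs precisely the standard Ruelle--Perron--Frobenius route that those references use: transfer operator, spectral gap, eigenfunction $h$ and eigenmeasure $\nu$, $d\mu_\phi = h\,d\nu$, and bounded distortion for the Gibbs inequalities. One small imprecision worth tightening: ``strict convexity of pressure at $\phi$'' by itself does not yield uniqueness of the equilibrium state --- the cleaner classical argument is either to invoke differentiability of the pressure functional, or (as Bowen does) to show that \emph{any} equilibrium state for $\phi$ must satisfy the Gibbs estimate \eqref{Gibbs}, from which two such measures are mutually absolutely continuous, and then ergodicity of $\mu_\phi$ (from the spectral gap) forces them to coincide.
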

%%%%%%%%%%%%%%%%%%%%%%%%%%%%%%%%%%%%%%
A potential $\phi$ is often assumed to be {\it normalized}, i.e. $P(\phi)=0$. If it is not the case, we can replace $\phi$ by $\phi-P(\phi)$.
%Such Gibbs measures have exponential decay of correlations (see Section \ref{sechitt}).

Now, let us recall some standard facts on multifractal
analysis of    Borel measures.
By an extensive literature   \cite{CLP,Rand,BMP,Simpelaere,BPS,PesinWeiss1997Chaos}, the multifractal
analysis of $\mup$ can be achieved, i.e. the multifractal spectrum $D_{\mup}$
of $\mup$ can be computed.   

%Recall the definitions of $\alpha_+$ and $\alpha_{\max}$ in \eqref{defalphamax}.

%%%%%%%%%%%%%%%%%%%%%%%%%%%%%%%%%%%%%%
\begin{theorem}
\label{th_recall} Consider a Markov map $T: [0,1]\rightarrow [0,1]$  and a normalized
H\"{o}lder continuous potential  $\phi: [0,1] \rightarrow
\mathbb{R} $.
%i.e. the topological
%pressure $P(\phi)$ in (\ref{Gibbs}) is equal to $0$ (in other words,
%we replace $\phi$ by $\phi-P(\phi)$). 
\begin{enumerate}
\item
The multifractal spectrum $D_\mup$ of $\mup$ is a
concave analytic map on the interval $]\alpha_-,\alpha_+[$, where
$\displaystyle \alpha_{-}:= \min_{\mu\in \mathcal{M}_{\rm inv}} {\int_{\zu}   (-\phi) \, d\mu \over \int _{\zu}   \log |T'|
 \, d\mu} $ and $\alpha_+$ was defined in \eqref{defalphamax}.
\item
The spectrum $D_\mup$ reaches its maximum value 1 at  $\alpha_{\max}$ defined in \eqref{defalphamax}.
\item
The graph of $D_\mup$ and the first bisector intersect at
a unique point which is  $(\dim  \mup,\dim  \mup)$. Moreover,
$\dim  \mu_{\phi} $ satisfies $$\dim  \mu_{\phi} =  {\int_{\zu}    (-\phi) \, d{\rm \mu_\phi} \over \int_{\zu}   \log |T'|  \, d{\rm \mu_\phi}}.$$
%\item For every $\alpha\in [\alpha_{-},\alpha_{+}]$, $D_{\mup}(\alpha)=\dim \mathcal{E}_{\mup}(\alpha)$, i.e. .
\end{enumerate}
\end{theorem}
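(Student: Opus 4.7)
The plan is to follow the standard thermodynamic formalism and realize $D_{\mup}$ as the Legendre transform of a pressure-defined function. First I would introduce $\mathcal{T}\colon \mathbb{R} \to \mathbb{R}$ defined implicitly by $P(q\phi - \mathcal{T}(q)\log|T'|) = 0$. Since condition (1) of Definition \ref{defmarkov} forces $\int \log|T'|\,d\mu > 0$ for every $\mu \in \mathcal{M}_{\rm inv}$, the map $t \mapsto P(q\phi - t\log|T'|)$ is strictly decreasing in $t$, so $\mathcal{T}(q)$ is well-defined. Real-analyticity of the pressure for H\"older potentials (via the spectral gap of the Ruelle transfer operator) combined with the implicit function theorem then yields that $\mathcal{T}$ is real-analytic and convex, with $\mathcal{T}(1) = 0$ by the normalization $P(\phi) = 0$ and $\mathcal{T}(0) = 1$ by Bowen's formula.

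Next I would introduce the ergodic equilibrium state $\mu_q$ associated to $q\phi - \mathcal{T}(q)\log|T'|$. Implicit differentiation gives $\mathcal{T}'(q) = \int \phi\,d\mu_q/\chi(\mu_q)$, where $\chi(\mu_q) := \int \log|T'|\,d\mu_q$, and combining this with the variational principle $0 = h(\mu_q) + q\!\int\! \phi\,d\mu_q - \mathcal{T}(q)\chi(\mu_q)$ yields
\begin{equation*}
 \dim \mu_q \;=\; \mathcal{T}(q) + q\,\alpha(q), \qquad \alpha(q) := -\mathcal{T}'(q).
\end{equation*}
The Gibbs estimate \eqref{Gibbs} together with Birkhoff's ergodic theorem applied to $\mu_q$ shows that $d_\mup(y) = \alpha(q)$ for $\mu_q$-almost every $y$, so $\mu_q$ is carried by $\mathcal{E}_\mup(\alpha(q))$; this gives the lower bound $D_\mup(\alpha(q)) \geq \mathcal{T}(q) + q\,\alpha(q)$.

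The matching upper bound is the main obstacle. For $\alpha$ in the interior of the range of $\alpha(\cdot)$, one must cover $\mathcal{E}_\mup(\alpha)$ by basic intervals $I \in \mathcal{G}_n$ on which $S_n\phi$ lies in a thin window around $-n\alpha\,\chi$, estimate $\sum_I |I|^s$ by invoking the distortion bound \eqref{distor} to pass from Gibbs masses $\mup(I)$ to diameters $|I|$, and optimize over $q$ to recover $\inf_q(\mathcal{T}(q) + q\alpha)$. The range of $\alpha(q)$ as $q$ varies over $\mathbb{R}$ coincides with $(\alpha_-,\alpha_+)$ by the variational characterizations in \eqref{defalphamax}, and analyticity plus strict concavity of $D_\mup$ on this interval follow from those of $\mathcal{T}$, completing item 1.

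For items 2 and 3, differentiating $D_\mup(\alpha(q)) = \mathcal{T}(q) + q\,\alpha(q)$ yields $D'_\mup(\alpha(q)) = q$. The maximum of $D_\mup$ is therefore attained at $q = 0$, where $\mu_0$ is by construction the equilibrium state for $-\log|T'|$, that is $\mu_0 = \mu_{\max}$; hence $\alpha(0) = \int(-\phi)\,d\mu_{\max}/\chi(\mu_{\max}) = \alpha_{\max}$ and $D_\mup(\alpha_{\max}) = \mathcal{T}(0) = 1$. The diagonal equation $D_\mup(\alpha) = \alpha$ reduces to $\mathcal{T}(q) = (1-q)\alpha$ with $\alpha = -\mathcal{T}'(q)$; at $q = 1$ one has $\mathcal{T}(1) = 0$ and $\mu_1 = \mup$, so $\alpha(1) = \int(-\phi)\,d\mup/\chi(\mup)$, which equals $\dim\mup$ via $\dim \mu_1 = \mathcal{T}(1) + \alpha(1) = \alpha(1)$. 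The tangent line to $D_\mup$ at this point has slope $D'_\mup(\alpha(1)) = 1$, so strict concavity forces $(\dim\mup,\dim\mup)$ to be the unique diagonal intersection.
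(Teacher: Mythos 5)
The paper does not actually prove Theorem~\ref{th_recall}; it is presented as a recall and attributed to the literature (Collet--Lebowitz--Porzio, Rand, Brown--Michon--Peyri\`ere, Pesin--Weiss, etc.). Your sketch reproduces precisely the standard thermodynamic-formalism argument that those references carry out: define the pressure equation $P(q\phi-\mathcal{T}(q)\log|T'|)=0$, obtain analyticity and convexity of $\mathcal{T}$ from the spectral gap, identify $\alpha(q)=-\mathcal{T}'(q)$ by implicit differentiation, realize the lower bound for $D_{\mup}(\alpha(q))$ via the equilibrium state $\mu_q$ and the upper bound via a Legendre-transform covering argument, then read off items~2 and~3 from the special values $q=0$ and $q=1$. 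So the content is correct and is the ``same approach'' in the only meaningful sense available; the paper simply delegates it.

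Two small remarks. First, you correctly note that $\mathcal{T}$ (the paper's $\eta_\phi$) is \emph{convex}; the paper's prose says ``concave,'' which is inconsistent with its own subsequent assertion that $q\mapsto\alpha(q)=-\eta_\phi'(q)$ is decreasing, so you have in fact corrected a slip. Second, your passage from the Birkhoff/Gibbs computation to ``$d_{\mup}(y)=\alpha(q)$ for $\mu_q$-a.e.\ $y$'' silently identifies the \emph{cylinder-based} local dimension $\lim_n \log\mup(I_n(y))/\log|I_n(y)|$ with the \emph{ball-based} local dimension $d_{\mup}(y)$ used in \eqref{defemu}. For a Markov map with a non-full shift these need not coincide pointwise, and the paper handles the analogous step elsewhere (Corollary~\ref{cor-fubini}) by invoking the Barral--Ben~Nasr--Peyri\`ere neighboring-boxes result for quasi-Bernoulli measures. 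Your sketch should either cite that result or restrict to the full-branch case; as written this is the one step that would not survive scrutiny without an extra lemma.
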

%%%%%%%%%%%%%%%%%%%%%%%%%%%%%%%%%%%%%%

%We also recall that this spectrum can be computed as the Legendre transform of the   $L^q$-spectrum  of $\mup$, but we do not need these properties yet.

%

%\mk

For every $q\in \rr$, there is a unique real number  $\eta_\phi(q)$
such that the topological pressure  $P(-\eta_\phi(q)\log |T'| + q
\phi)  $ associated with the H\"older potential
$\phi_q:=-\eta_\phi(q)\log |T'| + q \phi$ equals 0. Such a number
exists since the map $P:t \mapsto (-t \log |T'| + q \phi) $ is real-analytic  and
decreasing in $t$. The resulting function $q\mapsto \eta_\phi(q)$ is
real-analytic and concave. We denote by
\begin{equation}
\label{defmuq}
\mu_q:=\mu_{\phi_q}, \ \ \mbox{ where $\phi_q:=-\eta_\phi(q)\log |T'| + q \phi$},
\end{equation}
 the Gibbs measure associated with the potential $\phi_q$.
 Observe that $ \eta_\phi(0)=1$, and $ \eta_\phi(1)=0$. The measures
 $\mu_0$ and $\mu_1$($=\mup$) are associated with the potentials $\phi_0=-\log
 |T'|$ and $\phi_1=\phi$ respectively.
 By a  folklore theorem, the Lebesgue measure $Leb$ is equivalent to   $\mu_0 $, coinciding with $\mu_{\max} $  used in   Theorem
\ref{main-thm1}.

 \sk

For every $q\in \rr$, we introduce the exponent
\begin{equation}
\label{defalphaq}
 \alpha(q)={\int_{\zu}    (-\phi)  \, d\mu_{q} \over \int_{\zu}    \log |T'| \,d\mu_{q}}.
\end{equation}
By the Gibbs property of $\mu_{\phi} $ and the ergodicity of $\mu_q
$,
 the measure $\mu_q$ is supported by the level set $\mathcal{E}_{\mup}(\alpha(q))$ (equation \eqref{defemu}). Hence $D_{\mup} (\alpha(q)) \!=\dim  {\mu_q}=\eta_{\phi}(q)+q\alpha(q). $
The  map  $q\mapsto \alpha(q)$ is decreasing, and
\begin{eqnarray}
\nonumber  \lim_{q\to +\infty} \alpha(q)   = \alpha_-,& \ \ \ \lim\limits_{q\to -\infty}\alpha(q)   = \alpha_+,
\\
  \label{alpha1}    \alpha(1)  =\dim {\mu_{\phi}},&       \alpha(0)  = \alpha_{\max}  . 
  \end{eqnarray}
For $\alpha \in ]\alpha_-,\alpha_+[ $, we write    the inverse function
of $q \mapsto \alpha(q)$ as $
\alpha\mapsto q(\alpha).$

 %%%%%%%%%%%%%%%%%%%%%%%%%%%%%%%%%%%%%%
\begin{remark}\label{e+}
  By a standard result, we have
  \vspace{-1mm}
  \begin{eqnarray*}
\sup_{y: \  d_{\muphi}(y) \text{ exists}} \ d_{\muphi}(y) = \sup_{v\in \mathcal{M}_{\rm
erg}}\frac{-\int_{\zu} \phi  \, d_{\nu}}{\int _{\zu}   \log
    |T'|d\nu}=\alpha_{+}.
  \end{eqnarray*} 
\end{remark}
%%%%%%%%%%%%%%%%%%%%%%%%%%%%%%%%%%%%%%

 \begin{definition}\label{def-E}
\label{defsets2} Let $s\in \mathbb{R}^+$. We define the sets
\begin{eqnarray}
\label{def3}
     \mathcal{E}_{\ge s} = \{ y \in [0,1]: \,
\underline{d}_{\mu_\phi}(y)\ge  s \}
    \    \mbox{ and } \    \mathcal{E}_{\le s}    = \{y \in [0,1]: \, \underline{d}_{\mu_\phi}(y)\le s \},\\\label{def3bis}
     \mathcal{E}_{> s}   =   \{y \in [0,1]: \, \underline{d}_{\mu_\phi}(y) > s
     \}
 \       \mbox{ and } \    \mathcal{E}_{< s}    = \{y \in [0,1]: \, \underline{d}_{\mu_\phi}(y) < s \}.
\end{eqnarray}
\end{definition}

For sake of simplicity, we omit the dependence on $\phi$ of the quantites $\alpha(q)$, $q(\alpha)$, and   $\mathcal{E}$. Indeed, $\phi$ will be fixed along the proof.

\sk

From    large deviations theory \cite{BMP}, we get the  values for the Hausdorff dimensions of the sets $\mathcal{E} $ in
Definition \ref{def-E}. These
values depend on whether $s$ is located in the increasing or in
the decreasing part of the multifractal spectrum $D_{\mup}$.
\begin{proposition}
\label{propmaj} Let  $\mup$ be a Gibbs measure. Then:
\begin{enumerate}
\item
For every $s < \alpha_{\max}$, $\dim\big( \mathcal{E}_{< s} \big)
= \dim \big(\mathcal{E}_{\le s}  \big) = D_{\mup}(s)$.
\item
For every $s > \alpha_{\max}$, $\dim  \big( \mathcal{E}_{> s} \big)
= \dim \big(\mathcal{E}_{\ge s} \big) = D_{\mup}(s)$.
\end{enumerate}
\end{proposition}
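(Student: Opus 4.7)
My plan is to use the auxiliary Gibbs measures $\mu_q$ defined in \eqref{defmuq} to treat both parts simultaneously. For $s\in(\alpha_-,\alpha_+)\setminus\{\alpha_{\max}\}$, let $q=q(s)$ be the unique parameter with $\alpha(q)=s$; then $q>0$ when $s<\alpha_{\max}$ and $q<0$ when $s>\alpha_{\max}$, while the Legendre identity $D_{\mu_\phi}(s)=qs+\eta_\phi(q)$ holds in either range. The lower bounds will come from elementary set inclusions and continuity of the spectrum, while the upper bounds will come from a Besicovitch-type covering argument driven by a pointwise comparison between $\mu_q$ and $\mu_\phi$.

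For the lower bound in Part (1), I would observe that if $\alpha<s$ then $\mathcal{E}_{\mu_\phi}(\alpha)\subset\mathcal{E}_{<s}\subset\mathcal{E}_{\le s}$, since $d_{\mu_\phi}(y)=\alpha$ forces $\underline{d}_{\mu_\phi}(y)=\alpha<s$. Choosing $\alpha\in(\alpha_-,s)$ and letting $\alpha\uparrow s$, the analyticity of $D_{\mu_\phi}$ on $(\alpha_-,\alpha_+)$ asserted in Theorem \ref{th_recall} gives $\dim\mathcal{E}_{\le s}\ge\dim\mathcal{E}_{\mu_\phi}(\alpha)=D_{\mu_\phi}(\alpha)\to D_{\mu_\phi}(s)$. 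Part (2) is symmetric, using $\alpha\in(s,\alpha_+)$ tending down to $s$ and the inclusion $\mathcal{E}_{\mu_\phi}(\alpha)\subset\mathcal{E}_{>s}\subset\mathcal{E}_{\ge s}$.

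For the upper bounds the crucial estimate I need is the pointwise comparison
\[
\mu_q(B(y,r)) \ \asymp\ r^{\eta_\phi(q)}\,\mu_\phi(B(y,r))^q,\qquad y\in[0,1],\ 0<r<1,
\]
which I would derive from the Gibbs property \eqref{Gibbs} applied simultaneously to $\phi$ and to $\phi_q=-\eta_\phi(q)\log|T'|+q\phi$, together with the bounded distortion \eqref{distor} and the covering $\mathcal{C}_n$ of Section \ref{coveringsection}: a ball of radius $r\asymp 2^{-n}$ meets a bounded number of intervals $I\in\mathcal{C}_n$ of comparable length, on which $|I|^{\eta_\phi(q)}\asymp\exp(-\eta_\phi(q)S_n\log|T'|)$. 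Fixing $\epsilon>0$, for $y\in\mathcal{E}_{\le s}$ (Part 1) there exist arbitrarily small radii $r$ with $\mu_\phi(B(y,r))\ge r^{s+\epsilon}$, so the displayed inequality together with $q>0$ gives $\mu_q(B(y,r))\gtrsim r^{D_{\mu_\phi}(s)+q\epsilon}$. Extracting from the resulting fine cover of $\mathcal{E}_{\le s}$ a subfamily $\{B_i\}$ of bounded multiplicity $N$ via Besicovitch's theorem yields $\sum_i r_i^{D_{\mu_\phi}(s)+q\epsilon}\lesssim N\,\mu_q([0,1])<\infty$, which bounds the $(D_{\mu_\phi}(s)+q\epsilon)$-Hausdorff measure of $\mathcal{E}_{\le s}$ and, after sending $\epsilon\to 0$, gives $\dim\mathcal{E}_{\le s}\le D_{\mu_\phi}(s)$. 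Part (2) proceeds in parallel: for $y\in\mathcal{E}_{\ge s}$ with $s>\alpha_{\max}$ one has $\mu_\phi(B(y,r))\le r^{s-\epsilon}$ for every sufficiently small $r$, and raising to the negative exponent $q$ yields $\mu_q(B(y,r))\gtrsim r^{D_{\mu_\phi}(s)-q\epsilon}$, whose exponent again strictly exceeds $D_{\mu_\phi}(s)$.

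The principal difficulty I expect is the ball-to-cylinder transfer behind the comparison $\mu_q(B(y,r))\asymp r^{\eta_\phi(q)}\mu_\phi(B(y,r))^q$: the Gibbs property only controls Markov cylinders, so I must verify carefully, using $\mathcal{C}_n$ with $2^{-n}\asymp r$, that $B(y,r)$ can be sandwiched between a uniformly bounded number of basic intervals of comparable length, and that bounded distortion makes the Birkhoff sums appearing in the two Gibbs estimates differ only by $O(1)$ on each of them. A minor secondary point is the exclusion of the single value $s=\alpha_{\max}$ from the statement, which is fortunate, for at that point $q(s)=0$ and the sign argument above degenerates; away from it the strict sign of $q$ powers the whole argument.
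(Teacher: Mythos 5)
Your argument is correct in substance, and it is more self-contained than the paper's, which simply cites large-deviations theory (the Brown--Michon--Peyri\`ere reference) and gives no proof. The lower bounds via the inclusions $\mathcal{E}_{\mu_\phi}(\alpha)\subset\mathcal{E}_{<s}\subset\mathcal{E}_{\le s}$ (resp.\ $\mathcal{E}_{\mu_\phi}(\alpha)\subset\mathcal{E}_{>s}\subset\mathcal{E}_{\ge s}$), combined with $\dim\mathcal{E}_{\mu_\phi}(\alpha)=D_{\mu_\phi}(\alpha)$ from Theorem~\ref{th_recall} and the continuity of $D_{\mu_\phi}$, are fine. The upper bound, measuring $\mathcal{E}_{\le s}$ (resp.\ $\mathcal{E}_{\ge s}$) with $\mu_q$ and applying Besicovitch, is the standard multifractal-formalism route and works, since what the argument actually uses is only the one-sided inequality $\mu_q\big(B(y,Cr)\big)\gtrsim r^{\eta_\phi(q)}\,\mu_\phi\big(B(y,r)\big)^q$ for a fixed constant $C$ (after the usual countable decomposition of $\mathcal{E}_{\le s}$ or $\mathcal{E}_{\ge s}$ into pieces where the defining estimate holds uniformly below a fixed scale).

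Two points of imprecision, which you partially anticipated, deserve mention. First, the two-sided claim $\mu_q(B(y,r))\asymp r^{\eta_\phi(q)}\mu_\phi(B(y,r))^q$ uniformly in $(y,r)$ is false in general: when $q<0$, a ball $B(y,r)$ that straddles a cylinder of very small $\mu_\phi$-mass has $\mu_q(B(y,r))$ much larger than $r^{\eta_\phi(q)}\mu_\phi(B(y,r))^q$, so the $\lesssim$ direction fails. This is precisely the ball-versus-cylinder issue you flagged; it is harmless here because the proof only needs $\gtrsim$. Second, for $q>0$ the $\gtrsim$ bound without the enlargement constant $C$ need not hold: a cylinder $I_0\subset B(y,r)$ of length $\asymp r$ may carry much less $\mu_\phi$-mass than $B(y,r)$ itself, so $\mu_q(I_0)\asymp r^{\eta_\phi(q)}\mu_\phi(I_0)^q$ can be too small. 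The fix is the one you hinted at: pick from the $O(1)$ cylinders of $\mathcal{C}_n$ meeting $B(y,r)$ one carrying at least a fixed fraction of $\mu_\phi(B(y,r))$; it lies in $B(y,Cr)$, and Besicovitch applies equally well to the slightly enlarged balls. With these corrections the proposal is a valid, elementary proof of Proposition~\ref{propmaj}, whereas the paper delegates the statement to the cited literature.
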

%Actually, the above inequalities hold for any Borel probability
%measure $\mu$ if $D_{\mup}(s)$ is replaced   by the Legendre
%transform of the $L^q$-spectrum of $\mu$. Nevertheless,  since the
%Gibbs measures we consider satisfy a {\em multifractal formalism}, the
%two quantities coincide. To avoid unnecessary definitions, we
%will only use the formulas as stated in Proposition \ref{propmaj}.

\vspace{-2mm}

%%%%%%%%%%%%%%%%%%%%%%%%%%%%%%%%%%%%%%
%%%%%%%%%%%%%%%%%%%%%%%%%%%%%%%%%%%%%%
%%%%%%%%%%%%%%%%%%%%%%%%%%%%%%%%%%%%%%
%%%%%%%%%%%%%%%%%%%%%%%%%%%%%%%%%%%%%%
%%%%%%%%%%%%%%%%%%%%%%%%%%%%%%%%%%%%%%
%%%%%%%%%%%%%%%%%%%%%%%%%%%%%%%%%%%%%%
%%%%%%%%%%%%%%%%%%%%%%%%%%%%%%%%%%%%%%
 
\section{Results about hitting time}
\label{sec:Hitting}  

%%%%%%%%%%%%%%%%%%%%%%%%%%%%%%%%%%%%%%
\subsection{Orbits and hitting time.}  The proof  of Lemma \ref{lem-orbit}  is left to the reader.
 
%%%%%%%%%%%%%%%%%%%%%%%%%%%%%%%%%%%%%%
\begin{lemma}\label{lem-orbit}
  The following three assertions are equivalent:
  \begin{enumerate}
  \item
  There exists an integer $n_0 \geq 1$ such that $y =T^{n_0}x$ (i.e.
  $y\in \mathcal{O}^+(x)$).
  \item The hitting time
$\tau_r(x,y)$ is bounded for all $r>0$.
\item
There is
a sequence $r_i\to 0$ such that $\tau_{r_i}(x,y)$ is bounded.
\end{enumerate}
\end{lemma}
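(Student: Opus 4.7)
The lemma is essentially a formal statement, and the plan is to prove the cycle of implications $(1) \Rightarrow (2) \Rightarrow (3) \Rightarrow (1)$. Since $(2)\Rightarrow (3)$ is immediate (pick any sequence $r_i\to 0$), the content lies in the two other implications.

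For $(1) \Rightarrow (2)$: assume $y = T^{n_0}x$ for some fixed $n_0\geq 1$. Then for every $r>0$ the point $T^{n_0}x = y$ lies in $B(y,r)$, so $T^{n_0}x\in \mathcal{O}^+(x)\cap B(y,r)$, which shows $\tau_r(x,y)\leq n_0$ uniformly in $r$. In particular the hitting time is finite and bounded by the constant $n_0$ for every $r>0$.

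For $(3)\Rightarrow (1)$: suppose there exists a sequence $r_i\to 0$ and a constant $M$ such that $\tau_{r_i}(x,y)\leq M$ for every $i$. By the definition \eqref{deftauxc} of $\tau_{r_i}$, for each $i$ one can pick an integer $n_i\in\{1,\dots,M\}$ such that $T^{n_i}x\in B(y,r_i)$. Because the values $n_i$ lie in the finite set $\{1,\dots,M\}$, the pigeonhole principle yields an integer $n_0\in\{1,\dots,M\}$ and an infinite subsequence $(i_k)_{k\geq 1}$ along which $n_{i_k}=n_0$. Consequently $T^{n_0}x\in B(y,r_{i_k})$ for every $k$, and letting $k\to\infty$ (so that $r_{i_k}\to 0$) gives $T^{n_0}x = y$, which is assertion (1).

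The argument is straightforward; the only genuine point of care is the pigeonhole step in $(3)\Rightarrow (1)$, where one must ensure that ``bounded'' in the statement is interpreted as ``uniformly bounded by some $M<\infty$'' so that the hitting indices $n_i$ lie in a finite set — this is consistent with the convention set just after the definition of $\tau_r$, that $\tau_r(x,y)=+\infty$ when $\mathcal{O}^+(x)\cap B(y,r)=\emptyset$. No further topology or dynamics of $T$ is used.
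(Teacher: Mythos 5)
Your proof is correct, and since the paper explicitly states that the proof of this lemma is left to the reader, there is no paper proof to compare against; the cycle $(1)\Rightarrow(2)\Rightarrow(3)\Rightarrow(1)$ with the pigeonhole argument in the last step is exactly the natural argument one expects the authors had in mind. One tiny slip: your citation \eqref{deftauxc} points to the definition of $\tau(x,C)$ (hitting time of a basic interval) rather than to the definition of $\tau_r(x,y)$ given just above it, but this is purely a cross-reference issue and does not affect the argument.
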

%%%%%%%%%%%%%%%%%%%%%%%%%%%%%%%%%%%%%%
%%%%%%%%%%%%%%%%%%%%%%%%%%%%%%%%%%%%%%

%\begin{proof}
% If $y =T^{n_0}x$ for some $n_0\geq 1$, then $\tau_r(x,y)\leq n_0$ for all
% $r>0$. Conversely, suppose there exists a sequence $(r_i)_{i\geq 1} $
% such that $r_i\to 0 $ and $\tau_{r_i}(x,y)\leq K$. Then there is an
% integer $m$ such that $1\leq m \leq K $ and $\tau_{r_{i}}(x,y)=m $
% for infinite subsequence of $r_{i}$ which is still denoted by
% $r_i$. This implies that $T^mx \in B(y, r_i)$, with $r_i \to 0 $.
% Hence letting $i\to \infty$, we obtain $T^mx=y$.
%\end{proof}\medbreak
%%%%%%%%%%%%%%%%%%%%%%%%%%%%%%%%%%%%%%
The next  lemmas investigate the relationship between  
$\Lkap $ and   hitting times.
%%%%%%%%%%%%%%%%%%%%%%%%%%%%%%%%%%%%%%
\begin{lemma}\label{lem-inclusion} For every $\delta>0$, we have the two embedding properties:
\begin{eqnarray}\label{I}\ \ \  \ \
  \mathcal{R}_{< 1/\delta}(x)
\setminus \mathcal{O}^+(x) \subset  & \mathcal{L}^{\delta}(x)  & \subset    \mathcal{R}_{\leq 1/\delta}(x) \sk \\
\label{F}\sk  \mathcal{R}_{> 1/\delta}(x)
 \subset  & \mathcal{F}^{\delta}(x) &  \subset   \mathcal{R}_{ \geq  1/\delta}(x) \cup \mathcal{O}^+(x) .
\end{eqnarray}
\end{lemma}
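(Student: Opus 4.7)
My plan is to first observe that the two chains (\ref{I}) and (\ref{F}) are dual: since $\mathcal{F}^{\delta}(x)=\zu\setminus \mathcal{L}^{\delta}(x)$ and Definition \ref{defsets} gives $\mathcal{R}_{>1/\delta}(x)=\zu\setminus \mathcal{R}_{\le 1/\delta}(x)$ together with $\mathcal{R}_{\ge 1/\delta}(x)\cup\mathcal{O}^+(x)=\zu\setminus\bigl(\mathcal{R}_{<1/\delta}(x)\setminus\mathcal{O}^+(x)\bigr)$, taking complements in (\ref{I}) yields (\ref{F}). It therefore suffices to prove (\ref{I}).

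For the right half of (\ref{I}), I would take $y\in\mathcal{L}^{\delta}(x)$ and pick an increasing sequence $n_k\to\infty$ with $T^{n_k}x\in B(y,n_k^{-\delta})$. Setting $r_k:=n_k^{-\delta}\to 0$, the definition of the hitting time gives $\tau_{r_k}(x,y)\le n_k=r_k^{-1/\delta}$, so $\log\tau_{r_k}(x,y)/(-\log r_k)\le 1/\delta$. Passing to the liminf along this subsequence yields $R(x,y)\le 1/\delta$, as required.

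For the left half, let $y\in\mathcal{R}_{<1/\delta}(x)\setminus \mathcal{O}^+(x)$ and pick $s$ with $R(x,y)<s<1/\delta$. By definition of the liminf in (\ref{R}), there is a sequence $r_k\to 0$ along which $\log\tau_{r_k}(x,y)/(-\log r_k)<s$, equivalently $\tau_{r_k}(x,y)<r_k^{-s}$. Denote $n_k:=\tau_{r_k}(x,y)$. Because $y\notin\mathcal{O}^+(x)$, Lemma \ref{lem-orbit} forces $\tau_r(x,y)\to\infty$ as $r\to 0$, so $n_k\to\infty$ and (after extraction) the $n_k$ are pairwise distinct. The inequality $n_k<r_k^{-s}$ rewrites as $r_k<n_k^{-1/s}\le n_k^{-\delta}$, the last step using $1/s>\delta$. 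Therefore $T^{n_k}x\in B(y,r_k)\subset B(y,n_k^{-\delta})$, so infinitely many integers $n$ satisfy $T^n x\in B(y,n^{-\delta})$, which means $y\in\mathcal{L}^{\delta}(x)$.

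The only step that requires genuine care is the exclusion of $\mathcal{O}^+(x)$ in the left inclusion. Any $y=T^{n_0}x\in\mathcal{O}^+(x)$ trivially satisfies $R(x,y)=0$ (since $\tau_r(x,y)\le n_0$ for every $r>0$), yet nothing a priori guarantees that the orbit revisits $y$ within the shrinking window $B(y,n^{-\delta})$. Lemma \ref{lem-orbit} is exactly what converts the hypothesis ``$y$ is not on the forward orbit'' into the concrete divergence $n_k\to\infty$, and thus into infinitely many distinct valid hitting events. The remaining content of the proof is a direct translation between the liminf defining $R(x,y)$ and the limsup defining $\mathcal{L}^{\delta}(x)$.
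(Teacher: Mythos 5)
Your proof is correct and follows essentially the same route as the paper: establish both inclusions of (\ref{I}) directly — using the definition of $R(x,y)$, an auxiliary exponent $s\in(R(x,y),1/\delta)$ in place of the paper's $1/\delta-\epsi$, and Lemma \ref{lem-orbit} to guarantee $n_k\to\infty$ — and then obtain (\ref{F}) by complementation. The closing remark on why the set $\mathcal{O}^+(x)$ must be excluded is accurate and matches the role Lemma \ref{lem-orbit} plays in the paper's argument.
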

%%%%%%%%%%%%%%%%%%%%%%%%%%%%%%%%%%%%%%
%%%%%%%%%%%%%%%%%%%%%%%%%%%%%%%%%%%%%%
\begin{proof}
We prove (\ref{I}),  and  (\ref{F}) is deduced  by taking complements.

For the first inclusion, consider $y$ such that   $R(x,y) <  {1}/{\delta}$ and $y \not\in
  \mathcal{O}^+(x)$.  Choose $ \epsi >0 $ such that $R(x,y) < {1}/{\delta}- \epsi$.
  Then by   definition of $R(x,y)$, there is a
  positive real sequence   $(r_i)$ converging to zero such that $
      \tau_{r_i}(x,y) <
       \left( {1}/ {r_i}\right)^{{1}/{\delta}- \epsi}.
$ Consider the  sequence of integers $n_i:=\tau_{r_i}(x,y)$, for all $i\geq 1$. By construction,
  \[r_i < \left(  {n_i}\right)^{-1/({1}/{\delta}- \epsi)}<  {n_i^{-\delta}}.\]
  Thus $T^{n_i}x \in B(y, n_i^{-\delta})$.
  Since $y \not\in
  \mathcal{O}^+(x)$,   Lemma \ref{lem-orbit} yields that  $(n_i)_{i\geq 1} $ is not
  bounded. We deduce that 
  $y\in B(T^{n_i}x,  n_i^{-\delta} )$ for infinitely many increasing integers $n_i$. This proves that   $y\in \mathcal{L}^{\delta}(x)$.

For the second inclusion of \eqref{I},  consider $y\in \mathcal{L}^{\delta}(x)$. By definition, $T^{n_i}x \in B(y,
  n_i^{-\delta})$ for infinitely many integers $(n_i)_{i\geq 1}$. Hence, for  these   $n_i$, we have
  $\tau_{1/n_i^{\delta}}(x,y) \leq n_i$, which  implies that
$$
  R(x,y)\leq \liminf_{i\to \infty} \frac{\log \tau_{1/n_i^{\delta}}(x,y) }{- \log (1/n_i^{\delta})}
   \leq  \liminf_{n_i\to \infty}\frac{\log n_i}{\delta \log n_i} = \frac{1}{\delta}. $$
   This completes the proof.
\end{proof} 
%%%%%%%%%%%%%%%%%%%%%%%%%%%%%%%%%%%%%%

%%%%%%%%%%%%%%%%%%%%%%%%%%%%%%%%%%%%%%
\begin{lemma}\label{lem-noteventually}
 Suppose that  $x$ is not eventually periodic. If $y\in \mathcal{O}^+(x)
 $, then:
 \[
     y\in \mathcal{L}^{\delta}(x)\ \  {\rm if }\ R(y,y)<
  {1}/{\delta}  \qquad {\rm and} \qquad  y\in  \mathcal{F}^{\delta}(x)\  \ {\rm
     if} \      R(y,y)>
 {1}/{\delta}.
 \]
\end{lemma}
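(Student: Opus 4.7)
The plan is to reduce everything to the relationship between the orbit of $y$ and its own hitting times, using the fact that $y\in\mathcal{O}^+(x)$ means $y=T^{n_0}x$ for some $n_0\geq 1$, so for every $n>n_0$ the orbit point $T^nx$ coincides with $T^{n-n_0}y$. The hypothesis that $x$ is not eventually periodic translates into $y$ being non-periodic, i.e.\ $T^ky\neq y$ for all $k\geq 1$; by Lemma \ref{lem-orbit} applied with both arguments equal to $y$, this is exactly what guarantees $\tau_r(y,y)\to +\infty$ as $r\to 0$.

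For the first implication, suppose $R(y,y)<1/\delta$ and pick $\epsi>0$ with $R(y,y)<1/\delta-\epsi$. By definition of the liminf in \eqref{R}, there is a sequence $r_i\to 0$ with $k_i:=\tau_{r_i}(y,y)<r_i^{-(1/\delta-\epsi)}$, equivalently $r_i<k_i^{-1/(1/\delta-\epsi)}$. Since $1/(1/\delta-\epsi)>\delta$, the comparison $(n_0+k_i)^{-\delta}\sim k_i^{-\delta}$ gives $r_i<(n_0+k_i)^{-\delta}$ for all sufficiently large $i$; note that $k_i\to\infty$ because $\tau_{r_i}(y,y)\to\infty$. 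Setting $n_i:=n_0+k_i$, one obtains
\[
T^{n_i}x \;=\; T^{k_i}y \;\in\; B(y,r_i)\;\subset\; B(y,n_i^{-\delta}),
\]
and the $n_i$ form an unbounded sequence, so $y\in\mathcal{L}^{\delta}(x)$.

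For the second implication, suppose $R(y,y)>1/\delta$, fix $\epsi>0$ with $R(y,y)>1/\delta+\epsi$, and argue by contradiction: assume $y\in\mathcal{L}^{\delta}(x)$, so there exist infinitely many integers $n_i\to\infty$ with $T^{n_i}x\in B(y,n_i^{-\delta})$. For $n_i>n_0$, put $r_i:=n_i^{-\delta}$; then $T^{n_i-n_0}y\in B(y,r_i)$, whence $\tau_{r_i}(y,y)\leq n_i-n_0\leq n_i=r_i^{-1/\delta}$. On the other hand the hypothesis on $R(y,y)$ forces $\tau_{r_i}(y,y)>r_i^{-(1/\delta+\epsi)}$ for $i$ large enough, which is incompatible with the previous upper bound as $r_i\to 0$. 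This contradiction yields $y\in\mathcal{F}^{\delta}(x)$.

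No step looks genuinely hard; the only thing that requires care is making sure the shift by $n_0$ does not corrupt the comparison between $r_i$ and $(n_0+k_i)^{-\delta}$, and that the non-periodicity of $y$ is used precisely to guarantee $k_i\to\infty$ via Lemma \ref{lem-orbit} (so that the $n_i$ are genuinely unbounded). Everything else is a direct unfolding of the definitions of $R(y,y)$, $\tau_r$, $\mathcal{L}^\delta$ and $\mathcal{F}^\delta$.
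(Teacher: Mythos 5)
Your proof is correct and follows essentially the same route as the paper's: the paper simply observes that $y=T^{n_0}x$ with $y\notin\mathcal{O}^+(y)$ (using non-eventual-periodicity of $x$) and then says "the rest of the proof is the same as that of Lemma~\ref{lem-inclusion}", leaving implicit exactly the bookkeeping you carry out — applying Lemma~\ref{lem-orbit} to $(y,y)$ to get $k_i\to\infty$, shifting indices by $n_0$, and checking that the shift does not spoil the comparison $r_i<(n_0+k_i)^{-\delta}$. Your second half phrases the argument as a contradiction rather than reproving the second inclusion of \eqref{I} directly as the paper implicitly does, but this is a purely stylistic difference and the content is the same.
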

%%%%%%%%%%%%%%%%%%%%%%%%%%%%%%%%%%%%%%
Observe that the case where $R(y,y)=1/\delta$ is not determined yet.
%%%%%%%%%%%%%%%%%%%%%%%%%%%%%%%%%%%%%%
\begin{proof}
 Suppose that $y\in\mathcal{O}^+(x)$. Since $x$ is not eventually periodic, there exists a unique positive integer $n_0$
  such that $T^{n_0}x=y $ and $y\not\in \mathcal{O}^+(y)$. The
  rest of the proof is the same as that of Lemma \ref{lem-inclusion}.
%Thus by
%  Lemma \ref{lem-inclusion}, if $R(y,y)<
%     \frac{1}{\delta}$, then $y\in \mathcal{L}^{\delta}(y) $, which means that
%     \[
%          d(T^ny, y) < \frac{1}{n^{\delta}} \quad \text{ for infinitely many
%          } n.
%     \]
%     Since $y=T^{n_0}x $, we have for any $ \epsi>0 $
%          \[
%          d(T^{n+n_0}x, y) < \frac{1}{n^{\delta}} < \frac{1}{(n+n_0)^{\delta+ \epsi}} \quad \text{ for infinitely many
%          } n.
%     \]
%     Hence for any $ \epsi>0 $, $y\in \mathcal{L}^{\delta + \epsi} $.
\end{proof} 
%%%%%%%%%%%%%%%%%%%%%%%%%%%%%%%%%%%%%%

%%%%%%%%%%%%%%%%%%%%%%%%%%%%%%%%%%%%%%
\begin{lemma}\label{lem-orbit-set}
 Suppose that $\mu \in \mathcal{M}_{inv}$   has no atom. Then for $\mu$-a.e.
 $x$, we have
  \[
    \mathcal{O}^+(x)\subset \mathcal{L}^{\delta}(x)\  {\rm if}\  {1}/{\delta}> \dim  \mu  \quad {\rm and} \quad
     \mathcal{O}^+(x)\subset  \mathcal{F}^{\delta}(x)\  {\rm if}\   {1}/{\delta}<\dim 
     \mu.
 \]
\end{lemma}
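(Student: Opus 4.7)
The plan is to combine Lemma \ref{lem-noteventually} with an almost-sure identification of the self-return exponent $R(y,y)$ in terms of the dimension of $\mu$. First, I would dispose of the exceptional $x$: the set of eventually periodic points $\bigcup_{n\ge 0} T^{-n}\,\mathrm{Per}(T)$ is countable (each $T^{-n}\{p\}$ is finite under the Markov map $T$, and $\mathrm{Per}(T)$ itself is countable), hence $\mu$-null since $\mu$ is atomless. On the complementary full-measure set the hypothesis of Lemma \ref{lem-noteventually} applies to every $y=T^m x$, $m\ge 1$, and the problem reduces to showing that for $\mu$-a.e.\ $x$, every such $y$ satisfies $R(y,y)<1/\delta$ in the first case and $R(y,y)>1/\delta$ in the second.

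The crucial ingredient is the almost-sure identity
\[ R(y,y)=d_\mu(y) \quad \text{for $\mu$-a.e. } y\in\zu. \]
Via \eqref{distor} and \eqref{covering1}, $\tau_r(y,y)$ can be compared with the first return time of $y$ to the basic interval $J\in\mathcal{G}_n$ containing it, where $2^{-n}$ is of order $r$. The lower bound $R(y,y)\ge\underline d_\mu(y)$ follows from a Borel--Cantelli argument: the $\mu$-probability that some iterate $T^j y$ with $j\le N$ falls into $B(y,r)$ is at most $N\,\mu(B(y,r))$, yielding $\tau_r(y,y)\gtrsim\mu(B(y,r))^{-1}$ $\mu$-a.s. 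The reverse bound $R(y,y)\le\overline d_\mu(y)$ is a Poincar\'e recurrence estimate of Ornstein--Weiss/Boshernitzan type, exploiting $T$-invariance and the generating character of the Markov partition. After reducing to the ergodic components $\mu^e$ of $\mu$, the local dimension $\underline d_{\mu^e}$ is $T$-invariant modulo bounded distortion, hence $\mu^e$-a.s.\ equal to the constant $\dim\mu^e$, which coincides with $\dim\mu$ in the situations of interest.

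With this identification in hand, set $G_<=\{y:R(y,y)<1/\delta\}$ and $G_>=\{y:R(y,y)>1/\delta\}$. When $1/\delta>\dim\mu$, $\mu(G_<)=1$; by $T$-invariance $\mu\bigl(\bigcap_{m\ge 0}T^{-m}G_<\bigr)=1$, so for $\mu$-a.e.\ $x$ every $T^m x$ ($m\ge 1$) lies in $G_<$, and Lemma \ref{lem-noteventually} yields $\mathcal{O}^+(x)\subset\mathcal{L}^\delta(x)$. The case $1/\delta<\dim\mu$ is symmetric, using $G_>$ and the $\mathcal{F}^\delta$ branch of Lemma \ref{lem-noteventually}. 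The main obstacle is the return-time identity of the previous paragraph; the Ornstein--Weiss direction does not follow from pure measure theory and genuinely uses the Markov structure of $T$ together with the distortion control \eqref{distor}.
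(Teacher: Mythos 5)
Your proof follows the same route as the paper's: dispose of the eventually periodic points (a countable, hence $\mu$-null, set), invoke the Ornstein--Weiss identity $R(y,y)=d_\mu(y)=\dim\mu$ for $\mu$-a.e.\ $y$, intersect the resulting full-measure set over all $T^{-m}$ so that every point of $\mathcal{O}^+(x)$ inherits the property simultaneously, and conclude via Lemma~\ref{lem-noteventually}. The only difference is that the paper cites \cite{OrnsteinWeiss} for the return-time identity as a black box whereas you sketch an argument for it; since you ultimately invoke the same theorem, the structure of the two proofs is identical.
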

%%%%%%%%%%%%%%%%%%%%%%%%%%%%%%%%%%%%%%

%%%%%%%%%%%%%%%%%%%%%%%%%%%%%%%%%%%%%%
\begin{proof}
We remark that  the set of eventually periodic points is a countable
  set,  hence it has a $\mu$-measure equal to zero.   By the 
  Ornstein-Weiss theorem  \cite{OrnsteinWeiss},  
  \begin{equation}
\label{eq0}
  \mbox{for $\mu$-almost all $x$, for every $n\geq 1$, } \ \ \ \        R(T^nx, T^nx) = \dim  \mu.
  \end{equation}
  Hence, for a $\mu$-typical $x$ (which is   not
  eventually periodic),  consider $y\in \mathcal{O}^+(x)$.
  By the same argument as above,  $y=T^{n_0}x $ for some unique integer $ n_0\geq 0$. By \eqref{eq0}, $R(y, y) = \dim 
  \mu$. Applying  Lemma \ref{lem-noteventually},  we find that
  if ${1}/{\delta}>\dim \mu$
  ({\em resp.} ${1}/{\delta}<\dim \mu$) then  $y\in \mathcal{L}^{\delta}(x)$  ({\em resp.}  $y\in \mathcal{F}^{\delta}(x)$).
\end{proof} 
%%%%%%%%%%%%%%%%%%%%%%%%%%%%%%%%%%%%%%

%%%%%%%%%%%%%%%%%%%%%%%%%%%%%%%%%%%%%%
%%%%%%%%%%%%%%%%%%%%%%%%%%%%%%%%%%%%%%
%%%%%%%%%%%%%%%%%%%%%%%%%%%%%%%%%%%%%%
%%%%%%%%%%%%%%%%%%%%%%%%%%%%%%%%%%%%%%
%%%%%%%%%%%%%%%%%%%%%%%%%%%%%%%%%%%%%%
%%%%%%%%%%%%%%%%%%%%%%%%%%%%%%%%%%%%%%
%%%%%%%%%%%%%%%%%%%%%%%%%%%%%%%%%%%%%%
\subsection{Local dimension and hitting time.} 

 Gibbs measures enjoy exponential decay
of correlations   \cite{Ruelle,Parry-Pollicott1990,Liverani-Saussol-Vaienti1998,Baladi2000}.
More precisely, we have the following theorem.

%%%%%%%%%%%%%%%%%%%%%%%%%%%%%%%%%%%%%%
\begin{theorem}
\label{exponentialdecay}
Suppose that  $f:\zu\to\zu $ has bounded variation  and $g:\zu\to\zu$ is
  integrable. Then there exist constants $0<\beta<1 $ and $\Theta>0$, such that for every integer $n\geq 1$,
  \begin{eqnarray*}
    \left| \int fg\circ T^n d\mu_\phi - \int fd\mu_\phi \int g d\mu_\phi
    \right| \leq  \Theta \beta^{n} \left(\int |f| d\mu_\phi + {\rm
    var}(f)\right)\int|g|d\mu_\phi,
  \end{eqnarray*}
  where ${\rm var}(f) $ stands for the total variation of $f$ on $\zu$.
In particular, if $f=\mathbf{1}_A$ and $g=\mathbf{1}_B$ where
$A$ is an interval and $B$ is a measurable set, then for every  $n$,
  \begin{eqnarray}\label{exp-decay}
    \left| \mu_\phi(A\cap T^{-n}B) - \mu_\phi(A)\mu_\phi(B)
    \right| \leq \Theta \beta^{n} \left(\mu_\phi(A) + 2\right)\mu_\phi(B).
  \end{eqnarray}
\end{theorem}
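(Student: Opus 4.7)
The plan is to deduce this estimate from the spectral properties of the Ruelle--Perron--Frobenius transfer operator $\mathcal{L}_\phi$ associated with the normalized H\"older potential $\phi$,
$$\mathcal{L}_\phi u(y) = \sum_{x \in T^{-1}(y)} e^{\phi(x)} u(x).$$
After replacing $\phi$ by $\phi - P(\phi)$ if needed, the Gibbs measure $\mu_\phi$ satisfies the dual invariance $\mathcal{L}_\phi^{*} \mu_\phi = \mu_\phi$, which yields the duality identity
$$\int f \cdot (g \circ T^n) \, d\mu_\phi = \int \mathcal{L}_\phi^n f \cdot g \, d\mu_\phi, \qquad n \geq 1.$$
Thus the problem reduces to showing that $\mathcal{L}_\phi^n f$ converges to the constant $\int f \, d\mu_\phi$ exponentially fast in a suitable norm whenever $f$ has bounded variation.

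The main technical step is to establish a Lasota--Yorke inequality for $\mathcal{L}_\phi$ on the Banach space $\mathrm{BV}(\zu)$ equipped with the norm $\|u\| = \|u\|_{L^1(\mu_\phi)} + \mathrm{var}(u)$: there exist an integer $N$, $\lambda \in (0,1)$ and $C > 0$ such that
$$\mathrm{var}(\mathcal{L}_\phi^N u) \leq \lambda \, \mathrm{var}(u) + C \, \|u\|_{L^1(\mu_\phi)}.$$
This combines the uniform expansion (condition (1) of Definition \ref{defmarkov}), the Markov structure (conditions (3)--(4)), which ensures that the inverse branches of $T^N$ are globally defined on basic intervals, and the bounded-distortion condition (5), which gives uniform control on $\sum_{T^N x = y} e^{S_N \phi(x)}$ along inverse orbits. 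Together with the compact embedding $\mathrm{BV}(\zu) \hookrightarrow L^1(\mu_\phi)$, the Ionescu--Tulcea--Marinescu theorem then yields quasi-compactness of $\mathcal{L}_\phi$ on $\mathrm{BV}(\zu)$. The uniqueness of the equilibrium state $\mu_\phi$ forces $1$ to be a simple isolated eigenvalue with eigenfunction $\mathbf{1}$, and the rest of the spectrum lies in a disk of radius $\beta < 1$. Hence one obtains the spectral decomposition
$$\mathcal{L}_\phi^n f = \Big( \int f \, d\mu_\phi \Big) \mathbf{1} + R_n f, \qquad \|R_n f\|_{\mathrm{BV}} \leq \Theta \, \beta^n \, \|f\|_{\mathrm{BV}}.$$

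Inserting this decomposition into the duality identity and using $\|R_n f\|_\infty \leq \|R_n f\|_{\mathrm{BV}}$ produces
$$\Big| \int f \, g \circ T^n \, d\mu_\phi - \int f \, d\mu_\phi \int g \, d\mu_\phi \Big| \leq \Theta \, \beta^n \, \big( \|f\|_{L^1(\mu_\phi)} + \mathrm{var}(f) \big) \int |g| \, d\mu_\phi,$$
which is the stated inequality. For the particular case \eqref{exp-decay}, take $f = \mathbf{1}_A$ with $A$ an interval: then $\mathrm{var}(\mathbf{1}_A) \leq 2$ and $\|\mathbf{1}_A\|_{L^1(\mu_\phi)} = \mu_\phi(A)$, so the factor $(\mu_\phi(A) + 2)$ appears naturally. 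The main obstacle is the Lasota--Yorke inequality itself: one must iterate the bounded-distortion estimate carefully so that the prefactor of $\mathrm{var}(u)$ becomes a genuine contraction, which forces $N$ to be taken large enough that the uniform expansion $|(T^N)'| \geq \rho^{N/n_0} > 1$ dominates the multiplicity of inverse branches. Once this inequality is established, the remaining spectral analysis is entirely standard.
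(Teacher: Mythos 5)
The paper does not actually prove Theorem \ref{exponentialdecay}; it is quoted as a known result and attributed to the references \cite{Ruelle,Parry-Pollicott1990,Liverani-Saussol-Vaienti1998,Baladi2000}. Your sketch is precisely the transfer-operator proof found in those sources (Lasota--Yorke inequality on $\mathrm{BV}$, Ionescu--Tulcea--Marinescu quasi-compactness, spectral gap, duality), so the route is the expected one. The reduction from the general inequality to the indicator case \eqref{exp-decay} via $\mathrm{var}(\mathbf{1}_A)\le 2$ is also correct.

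One technical point is stated imprecisely. Merely subtracting the pressure does not give $\mathcal{L}_\phi^*\mu_\phi=\mu_\phi$; what the normalization $P(\phi)=0$ gives is $\mathcal{L}_\phi^*\nu=\nu$ for the \emph{conformal} measure $\nu$, while $\mu_\phi=h\,\nu$ with $h$ the (generally nonconstant) positive eigenfunction $\mathcal{L}_\phi h=h$. To obtain the duality identity $\int f\,(g\circ T^n)\,d\mu_\phi=\int(\hat{\mathcal{L}}^n f)\,g\,d\mu_\phi$ with $\hat{\mathcal{L}}\mathbf{1}=\mathbf{1}$ and $\hat{\mathcal{L}}^*\mu_\phi=\mu_\phi$, one must pass to the $h$-conjugated operator $\hat{\mathcal{L}}u=h^{-1}\mathcal{L}_\phi(hu)$ (equivalently, replace $\phi$ by the cohomologous potential $\phi+\log h-\log h\circ T$). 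Since $h$ is H\"older continuous and bounded away from $0$ and $\infty$, this conjugation only affects constants, so the conclusion stands, but the step as written is not literally correct and a reader could not verify the dual invariance from the normalization $P(\phi)=0$ alone. With that fix, the argument is a correct and complete sketch of the standard proof.
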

%%%%%%%%%%%%%%%%%%%%%%%%%%%%%%%%%%%%%%

Theorem \ref{exponentialdecay}
 allows us to use the following theorem  which describes the relationship between  hitting time and   local dimension of invariant measures.
%%%%%%%%%%%%%%%%%%%%%%%%%%%%%%%%%%%%%%
\begin{theorem}[\cite{Ga}]\label{Thm-Ga}
  If $(X, T, \mu)$ has superpolynomial decay of correlations and if ${d}_\mu(y)$ exists,
then for $\mu $-almost every $x$ we have
 \[R(x, y) = {d}_\mu(y).\]
\end{theorem}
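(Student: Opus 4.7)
The plan is to establish the two inequalities $R(x,y) \leq d_\mu(y)$ and $R(x,y) \geq d_\mu(y)$ separately. Each holds for $\mu$-a.e.\ $x$, and intersecting the two full-measure sets gives the result. The lower bound is a short Borel--Cantelli argument that uses only $T$-invariance, while the upper bound is the place where the decay-of-correlations hypothesis is essential.

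For the lower bound, fix $\epsilon > 0$. Since $d_\mu(y)$ exists, $\mu(B(y,r)) \leq r^{d_\mu(y)-\epsilon}$ for all sufficiently small $r$. Set $r_k = 2^{-k}$ and $N_k = \lfloor r_k^{-d_\mu(y)+2\epsilon}\rfloor$. By $T$-invariance of $\mu$,
\[
  \mu\bigl\{x : \tau_{r_k}(x,y) \leq N_k\bigr\} \;\leq\; \sum_{n=1}^{N_k} \mu\bigl(T^{-n}B(y,r_k)\bigr) \;=\; N_k\,\mu(B(y,r_k)) \;\leq\; r_k^{\,\epsilon},
\]
which is summable in $k$. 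Borel--Cantelli then gives $\tau_{r_k}(x,y) > N_k$ for all large $k$, whence $R(x,y) \geq d_\mu(y) - 2\epsilon$ for $\mu$-a.e.\ $x$; letting $\epsilon$ range over a countable sequence tending to zero closes this half.

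For the upper bound, fix $\epsilon > 0$ and set $r_k = 2^{-k}$, $A_k = B(y,r_k)$, $M_k = \lceil r_k^{-d_\mu(y)-2\epsilon}\rceil$, and $g_k = \lceil Ck\rceil$ for a constant $C$ chosen below. Introduce
\[
  F_{L,k} := \bigcap_{j=0}^{L-1} T^{-jg_k}A_k^c \;=\; A_k^c \cap T^{-g_k} F_{L-1,k}.
\]
Applying \eqref{exp-decay} with $f = \mathbf{1}_{A_k^c}$ (indicator of at most two intervals, hence of bounded variation independent of $k$) and $B = F_{L-1,k}$ yields, for an absolute constant $C_0$,
\[
  \mu(F_{L,k}) \;\leq\; \bigl(1 - \mu(A_k) + C_0\Theta\beta^{g_k}\bigr)\,\mu(F_{L-1,k}).
\]
Choosing $C$ so that $C_0\Theta\beta^{Ck} \leq \tfrac12\mu(A_k)$ for $k$ large (possible since $\mu(A_k) \geq r_k^{d_\mu(y)+\epsilon}$ decays only polynomially in $k$ while $\beta^{g_k}$ decays exponentially), one iterates $L_k := \lfloor M_k/g_k\rfloor$ times to obtain $\mu(F_{L_k,k}) \leq \exp(-L_k\mu(A_k)/2)$. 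A direct check shows $L_k\mu(A_k) \gtrsim 2^{k\epsilon}/k$, so this bound is summable in $k$. By $T$-invariance, $\mu\{\tau_{r_k}(\cdot,y) > M_k\} \leq \mu(F_{L_k,k})$, and Borel--Cantelli gives $\tau_{r_k}(x,y) \leq M_k$ for $\mu$-a.e.\ $x$ and all large $k$, so $R(x,y) \leq d_\mu(y) + 2\epsilon$.

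The main obstacle lies in the inductive step of the upper bound: the estimate \eqref{exp-decay} requires one of the two indicators to be of bounded variation, whereas $F_{L-1,k}$, being an intersection of many dynamical preimages, has no uniform BV control. The trick is to condition on the outermost factor $\mathbf{1}_{A_k^c}$, which \emph{is} BV, and absorb the remaining constraints into the measurable set $F_{L-1,k}$. The spacing $g_k$ must then be large enough for the error $\beta^{g_k}$ to beat the small measure $\mu(A_k) \asymp r_k^{d_\mu(y)}$, yet small enough to keep $L_k = M_k/g_k$ large enough for $L_k\mu(A_k) \to \infty$; the superpolynomial (here exponential) decay hypothesis is exactly what makes this balance possible.
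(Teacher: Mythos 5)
The paper does not actually prove Theorem \ref{Thm-Ga}; it is cited directly from Galatolo's paper \cite{Ga}, so there is no internal proof to compare against. That said, your blind proof is correct and is essentially the standard argument (and essentially the one in \cite{Ga}).

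A few remarks on the structure of what you wrote. The lower bound $R(x,y)\ge d_\mu(y)$ is, as you note, a pure Borel--Cantelli/union-bound argument using only $T$-invariance: $N_k\,\mu(B(y,r_k))\le r_k^\epsilon$ is summable, giving $\tau_{r_k}(x,y)>N_k$ eventually; one should also record (as is routine) that for $r\in[r_{k+1},r_k]$ one has $\tau_r\ge\tau_{r_k}$ and $-\log r\le (k+1)\log 2$, so the estimate along the dyadic subsequence controls the $\liminf$ over all $r$. For the upper bound, since $R$ is a $\liminf$, a subsequence suffices, so the dyadic reduction costs nothing there. Your blocking estimate is the right mechanism: peeling off the outer indicator $\mathbf{1}_{A_k^c}$, which has variation bounded independently of $k$, lets you iterate \eqref{exp-decay} even though the inner sets $F_{L-1,k}$ have no BV control; the spacing $g_k\sim Ck$ makes $\beta^{g_k}$ dominated by $\mu(A_k)\gtrsim 2^{-k(d_\mu(y)+\epsilon)}$ while keeping $L_k\mu(A_k)\gtrsim 2^{k\epsilon}/k$, which suffices for summability. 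One should also note that the null set produced by Borel--Cantelli depends on the fixed $y$ (and on $\epsilon$, removed by a countable intersection), consistent with the theorem's order of quantifiers and with Remark \ref{remark-depend}. Finally, you carry out the argument under \emph{exponential} decay (which is what the paper's Theorem \ref{exponentialdecay} provides); the theorem as stated assumes only \emph{superpolynomial} decay, under which the same blocking scheme still works after retuning $g_k$ (one takes $g_k$ growing fast enough that the correlation error is below $\mu(A_k)/2$, but still $o(M_k)$; superpolynomial decay is exactly what makes both constraints simultaneously satisfiable). This is the standard trade-off you identify in your final paragraph, so no gap --- just worth making explicit that your proof as written covers the exponential case used in this paper, while the cited statement is slightly more general.
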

%%%%%%%%%%%%%%%%%%%%%%%%%%%%%%%%%%%%%%

We return to the study of the Markov map $T$ on the interval $ [0,1]$.

%%%%%%%%%%%%%%%%%%%%%%%%%%%%%%%%%%%%%%
\begin{corollary}\label{cor-fubini}
 Let $\mu_\phi,\mu_\psi$ be two
$T$-invariant Gibbs probability measures on $\zu$ associated with
normalized H\"{o}lder potentials $\phi$ and $\psi$. We have  \begin{eqnarray*}
  \mbox{ for $\mu_{\phi}$-a.e.  $x$, \  for $\mu_\psi$-a.e. $y$, } \
  R(x,y)=d_{\mu_{\phi}}(y)=\frac{-\int_{\zu}  \phi \, d\mu_\psi}{\int_{\zu}  \log
    |T'| \, d\mu_\psi}  .
  \end{eqnarray*}
\end{corollary}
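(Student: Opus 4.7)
The corollary combines two ingredients, which I propose to establish in sequence. First, I will show that $d_{\mu_\phi}(y)$ exists and equals $\alpha := -\int \phi\,d\mu_\psi / \int \log|T'|\,d\mu_\psi$ for $\mu_\psi$-almost every $y$. Second, I will invoke Theorem \ref{Thm-Ga} to identify this local dimension with the hitting time $R(x,y)$ for $\mu_\phi$-almost every $x$, and finally use Fubini to exchange the two quantifiers.

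For the first step, let $I_n(y) \in \mathcal{G}_n$ denote the basic interval of generation $n$ containing $y$. The normalized Gibbs property \eqref{Gibbs} (with $P(\phi)=0$) gives $\log \mu_\phi(I_n(y)) = S_n \phi(y) + O(1)$, while the bounded distortion \eqref{distor} together with the mean value theorem yields $\log |I_n(y)| = -S_n \log|T'|(y) + O(1)$. The Gibbs measure $\mu_\psi$ is ergodic (a consequence of the mixing provided by Theorem \ref{exponentialdecay}), and both $\phi$ and $\log|T'|$ are bounded and continuous on $[0,1]$, hence $\mu_\psi$-integrable. Birkhoff's ergodic theorem therefore gives, for $\mu_\psi$-a.e. $y$,
$$\lim_{n\to\infty}\frac{S_n\phi(y)}{n}=\int\phi\,d\mu_\psi, \qquad \lim_{n\to\infty}\frac{S_n\log|T'|(y)}{n}=\int\log|T'|\,d\mu_\psi,$$
so that $\log\mu_\phi(I_n(y))/\log|I_n(y)| \to \alpha$. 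To transfer this asymptotic from basic intervals to the Euclidean balls appearing in the definition of $d_{\mu_\phi}$, I rely on the coverings $\mathcal{C}_n$ of Section \ref{coveringsection}: for small $r$, $B(y,r)$ is sandwiched between a uniformly bounded number of basic intervals of comparable generations (thanks to \eqref{covering} and \eqref{covering1}), whose $\mu_\phi$-measures and lengths satisfy the same Birkhoff asymptotics up to a bounded multiplicative error. Taking logarithms and letting $n\to\infty$ yields $d_{\mu_\phi}(y) = \alpha$ for $\mu_\psi$-a.e. $y$.

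For the second step, Theorem \ref{exponentialdecay} provides the exponential (hence superpolynomial) decay of correlations required by Theorem \ref{Thm-Ga}, so for every $y$ at which $d_{\mu_\phi}(y)$ exists one has $R(x,y) = d_{\mu_\phi}(y)$ for $\mu_\phi$-almost every $x$. Combined with the first step, the set
$$E := \{(x,y) \in [0,1]^2 : R(x,y) = d_{\mu_\phi}(y) = \alpha\}$$
has full measure for the product $\mu_\phi \otimes \mu_\psi$, and Fubini's theorem swaps the order of quantifiers to give the statement of the corollary.

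The only delicate point I anticipate is the passage from basic intervals to balls in the definition of the local dimension: if $y$ is very close to the boundary of some $I_n(y)$ then $B(y,r)$ can spill over into neighboring cells whose Birkhoff sums are a priori uncorrelated with $S_n\phi(y)$. This is handled automatically for $\mu_\psi$-typical $y$ because the grand orbit of the finite set of Markov endpoints is countable and hence $\mu_\psi$-null, so $y$ lies in the interior of every $I_n(y)$; the Hölder continuity of $\phi$ together with \eqref{covering1} then makes the bounded-generation overspill contribute only an $O(1)$ error, negligible after division by $\log r \to -\infty$.
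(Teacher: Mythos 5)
Your outline matches the paper's (Birkhoff asymptotics on cylinders, then hitting time $=$ local dimension via Theorem~\ref{Thm-Ga}, then Fubini), but there is a genuine gap in the middle step where you pass from basic intervals to balls.

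The easy direction is the upper bound on $\overline{d}_{\mu_\phi}(y)$: once $|I_n(y)| \le r$ one has $I_n(y) \subset B(y,r)$, hence a lower bound on $\mu_\phi(B(y,r))$, and distortion \eqref{distor} controls the change in the denominator. The hard direction is the lower bound on $\underline{d}_{\mu_\phi}(y)$, which needs an \emph{upper} bound on $\mu_\phi(B(y,r))$. For $r$ comparable to $|I_n(y)|$, the ball $B(y,r)$ necessarily overspills into the adjacent basic interval $J$ of the same generation. Your justification for discarding that overspill is not valid: (i) knowing that $y$ lies in the interior of every $I_n(y)$ (because the Markov endpoints and their preimages are countable) gives no quantitative lower bound on $\mathrm{dist}(y,\partial I_n(y))$ as $n\to\infty$, so you cannot choose $n$ with $B(y,r)\subset I_n(y)$ and $|I_n(y)|\sim r$ simultaneously; and (ii) H\"older continuity of $\phi$ only controls $|S_n\phi(z)-S_n\phi(y)|$ when $z$ and $y$ lie in the \emph{same} cylinder of generation $n$ --- for $z$ in the neighboring cell $J$, the orbits of $z$ and $y$ separate after few iterations and $S_n\phi(z)$ is completely unrelated to $S_n\phi(y)$, so $\mu_\phi(B(y,r)\cap J)$ is not controlled by $\mu_\phi(I_n(y))$. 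This is exactly the phenomenon (masses piling up against a shared boundary) that makes the equality of cylinder-dimension and ball-dimension a nontrivial statement, true only almost everywhere.

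The paper resolves this by quoting Theorem~5.1 of Barral, Ben Nasr and Peyri\`ere \cite{BarralBennasrPeyriere}, which establishes precisely that for quasi-Bernoulli measures (in particular Gibbs measures of Markov maps) the cylinder local dimension $\tilde d_{\mu_\phi}(y)$ coincides with the ball local dimension $d_{\mu_\phi}(y)$ for $\mu_\psi$-a.e.\ $y$. You would either need to cite that result or supply a genuine replacement, e.g.\ a Borel--Cantelli argument showing that for $\mu_\psi$-a.e.\ $y$ and every $\epsi>0$ one eventually has $\mathrm{dist}(y,\partial I_n(y)) \ge |I_n(y)|^{1+\epsi}$; your current one-sentence dismissal does not do the work. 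The rest of your argument (ergodicity of $\mu_\psi$, Birkhoff, Galatolo's theorem, Fubini) is fine and matches the paper.
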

%%%%%%%%%%%%%%%%%%%%%%%%%%%%%%%%%%%%%%
%%%%%%%%%%%%%%%%%%%%%%%%%%%%%%%%%%%%%%
\begin{proof}
 For $\mu_\psi$-almost every $y$,
 $\frac{1}{n}S_n \phi(y) $ tends to $\int_{\zu} \phi\, d\mu_\psi$. Hence,  for $\mu_\psi$-almost every $y$, using the Gibbs property of $\mup $ and $\mu_{\phi_0}$, the H\"older exponent $\tilde{d}_{\mup}(y)$  computed on the basic intervals defined as
 $$\tilde{d}_{\mup}(y):= \lim_{n\to+\infty} \frac{\log \mu(I_n(y))}{\log |I_n(y)|}$$
 exists and is equal  to
$
   \frac{-\int_{\zu}  \phi \,  d\mu_\psi}{\int _{\zu}  \log
    |T'| \, d\mu_\psi}
$. By Theorem 5.1 of \cite{BarralBennasrPeyriere}, for quasi-Bernoulli measures, and in particular for Gibbs measures associated with Markov maps, $\tilde{d}_{\mup}(y)$ coincides with $d_{\mup}(y)$, for $\mu_\psi$-a.e. $y$.
%  Thus by Theorem \ref{Thm-Ga}, for $\nu-$almost every $ y$,
%  \[R(x,y)=d_{\mu_{\phi}}(y)=\frac{-\int_{\zu}  \phi \, d\nu}{\int_{\zu}  \log
%    |T'| \, d\nu} \qquad  \mbox{for $\muphi$-almost every } x.\]
  The lemma then follows by Theorem \ref{Thm-Ga} and the Fubini theorem.
\end{proof} 
%%%%%%%%%%%%%%%%%%%%%%%%%%%%%%%%%%%%%%

%%%%%%%%%%%%%%%%%%%%%%%%%%%%%%%%%%%%%%
\begin{corollary}\label{cor-R}
 Let $\mu_\phi,\mu_\psi$ be two
$T$-invariant Gibbs probability measures on $\zu$ associated with
normalized H\"{o}lder potentials $\phi$ and $\psi$.  Then
  \[
\mbox{for $\mu_\phi\times\mu_\psi-$almost every  $(x,y)$,
 } \ \    R(x, y)= d_{\mu_{\phi}}(y)= \frac{\int _{\zu} (-\phi) {d}\mu_{\psi}}{\int_{\zu}  \log |T'|
   {d}\mu_{\psi}}.
  \]
\end{corollary}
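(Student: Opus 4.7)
The plan is to deduce Corollary \ref{cor-R} directly from Corollary \ref{cor-fubini} by an application of the Fubini--Tonelli theorem. Corollary \ref{cor-fubini} already delivers the pointwise identity
\[
R(x,y) = d_{\mu_\phi}(y) = \frac{-\int_{\zu}\phi\,d\mu_\psi}{\int_{\zu}\log|T'|\,d\mu_\psi}
\]
with the iterated quantifier ``for $\mu_\phi$-a.e.\ $x$, for $\mu_\psi$-a.e.\ $y$''. The content of Corollary \ref{cor-R} is merely to upgrade this to a statement about the product measure $\mu_\phi\times\mu_\psi$.

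Concretely, I would introduce the exceptional set
\[
E = \Bigl\{(x,y)\in\zu\times\zu : R(x,y) \neq d_{\mu_\phi}(y) \ \text{or} \ d_{\mu_\phi}(y) \neq \tfrac{-\int\phi\,d\mu_\psi}{\int\log|T'|\,d\mu_\psi}\Bigr\},
\]
and verify its measurability: the function $(x,y)\mapsto\tau_r(x,y)$ is Borel measurable for each $r>0$ as an infimum over a countable family of measurable conditions $T^n x \in B(y,r)$, and $R(x,y)$ is then a $\liminf$ over a countable sequence $r\to 0$, hence Borel. The local dimension $d_{\mu_\phi}(y)$ (where it exists) is likewise Borel. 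Thus $E$ is a Borel subset of $\zu^2$.

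Corollary \ref{cor-fubini} says exactly that for $\mu_\phi$-a.e.\ $x$, the vertical section $E_x = \{y : (x,y)\in E\}$ satisfies $\mu_\psi(E_x)=0$. By the Fubini--Tonelli theorem applied to the nonnegative measurable function $\mathbf{1}_E$,
\[
(\mu_\phi\times\mu_\psi)(E) = \int_\zu \mu_\psi(E_x)\,d\mu_\phi(x) = 0,
\]
which is precisely the statement of Corollary \ref{cor-R}.

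There is no real obstacle here beyond the measurability check; the only point worth being careful about is that Corollary \ref{cor-fubini} must be interpreted in the correct order of quantifiers (first $x$, then $y$), which is indeed how it is stated, so Fubini applies without further work.
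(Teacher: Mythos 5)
Your proposal is correct and is exactly the intended argument: the paper gives no explicit proof for Corollary \ref{cor-R}, treating it as an immediate consequence of Corollary \ref{cor-fubini} via Fubini--Tonelli, precisely as you do (and Fubini is already invoked inside the proof of Corollary \ref{cor-fubini}, so the measurability of the exceptional set is implicitly assumed there too). Your measurability check is a reasonable addition but not a deviation from the paper's route.
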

%%%%%%%%%%%%%%%%%%%%%%%%%%%%%%%%%%%%%%

%%%%%%%%%%%%%%%%%%%%%%%%%%%%%%%%%%%%%%
%%%%%%%%%%%%%%%%%%%%%%%%%%%%%%%%%%%%%%
%%%%%%%%%%%%%%%%%%%%%%%%%%%%%%%%%%%%%%
%%%%%%%%%%%%%%%%%%%%%%%%%%%%%%%%%%%%%%
%%%%%%%%%%%%%%%%%%%%%%%%%%%%%%%%%%%%%%
%%%%%%%%%%%%%%%%%%%%%%%%%%%%%%%%%%%%%%
%%%%%%%%%%%%%%%%%%%%%%%%%%%%%%%%%%%%%%
\subsection{First results on covering.}

We introduce the real number
\begin{eqnarray*}\delta(\phi,\psi) :
=\sup \left\{ \delta \geq 0 :  \mu_{\psi}(\mathcal{L}^{\delta}(x))  = 1 \hbox{ for }
\mu_{\phi}\mbox{-almost every } x \right\}.
\end{eqnarray*}

The following proposition, which summarizes the results of the
previous sections,  will be useful when proving the lower bound for
Part I of Theorem \ref{main-thm1}. We can also use it to give a direct proof
for the item 3. of Theorem \ref{main-thm1}. 

%%%%%%%%%%%%%%%%%%%%%%%%%%%%%%%%%%%%%%
\begin{proposition}\label{thm-full-measure} For any normalized H\"{o}lder potentials $\phi$ and $\psi$, we have
\[\delta(\phi,\psi)= \frac{\int_{\zu}   \log |T'| \,  d\mu_{\psi}}{\int_{\zu} \,  (-\phi)
\, d\mu_{\psi}}.\] In particular,  for every $\alpha \in
]\alpha_-,\alpha_+[ $,
 \begin{equation}
 \label{valeurkappa}
 \delta(\phi,\phi_{q(\alpha)})= \sup \left\{ \delta \geq 0:  \mu_{q(\alpha)}(\mathcal{L}^{\delta}(x))  = 1
\ {\rm{ for }}\  \mu_{\phi}-a.e.\ x \right\}  =   {1}/{\alpha}.
 \end{equation}\end{proposition}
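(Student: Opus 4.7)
The plan is to read off $\delta(\phi,\psi)$ directly from Corollary \ref{cor-R} combined with the inclusion Lemma \ref{lem-inclusion}. Set
\[
\alpha := \frac{\int_{\zu} (-\phi)\, d\mu_\psi}{\int_{\zu} \log|T'|\, d\mu_\psi},
\]
so that the claimed value is $\delta(\phi,\psi)=1/\alpha$. By Corollary \ref{cor-R}, for $\mu_\phi\times\mu_\psi$-a.e.\ $(x,y)$ one has $R(x,y)=\alpha$. Applying Fubini yields
\[
\text{for } \mu_\phi\text{-a.e.\ } x,\qquad \mu_\psi\big(\{y:R(x,y)=\alpha\}\big)=1.
\]

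For the lower bound $\delta(\phi,\psi)\geq 1/\alpha$, I would take any $\delta<1/\alpha$, i.e.\ $1/\delta>\alpha$. Then the $\mu_\psi$-full set $\{y:R(x,y)=\alpha\}$ is contained in $\mathcal{R}_{<1/\delta}(x)$. Since $\mu_\psi$ is a Gibbs measure associated to a Hölder potential on an expanding Markov map, it is non-atomic, so the countable set $\mathcal{O}^+(x)$ has $\mu_\psi$-measure zero. The left inclusion of \eqref{I} in Lemma \ref{lem-inclusion} then gives $\mu_\psi(\mathcal{L}^\delta(x))=1$ for $\mu_\phi$-a.e.\ $x$, hence $\delta\leq \delta(\phi,\psi)$.

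For the upper bound $\delta(\phi,\psi)\leq 1/\alpha$, I take any $\delta>1/\alpha$, i.e.\ $1/\delta<\alpha$. Then the same $\mu_\psi$-full set is contained in $\mathcal{R}_{>1/\delta}(x)$, which by the left inclusion of \eqref{F} lies in $\mathcal{F}^\delta(x)$. Thus $\mu_\psi(\mathcal{L}^\delta(x))=0$ for $\mu_\phi$-a.e.\ $x$, so $\delta\notin$ the defining set of $\delta(\phi,\psi)$. Combining the two bounds gives the equality $\delta(\phi,\psi)=1/\alpha$.

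The second statement \eqref{valeurkappa} is then immediate: specialise $\psi=\phi_{q(\alpha)}$, so $\mu_\psi=\mu_{q(\alpha)}$, and by the very definition \eqref{defalphaq} of $\alpha(q)$ one has $\alpha(q(\alpha))=\alpha$; substituting into the formula just proved yields $\delta(\phi,\phi_{q(\alpha)})=1/\alpha$. There is no real obstacle here — the whole proof is a Fubini-type argument wrapped around Corollary \ref{cor-R}; the only subtle point worth spelling out is the non-atomicity of $\mu_\psi$, which is needed to discard the orbit $\mathcal{O}^+(x)$ when passing between $\mathcal{R}_{<1/\delta}(x)$ and $\mathcal{L}^\delta(x)$.
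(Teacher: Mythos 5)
Your proof is correct and follows essentially the same route as the paper's proof, which reads in full ``Combine Lemma \ref{lem-inclusion}, Corollary \ref{cor-R} and the definition \eqref{defalphaq} of $\alpha(q)$'' --- you have simply written out the Fubini step and the two one-sided bounds explicitly. Your remark on the non-atomicity of $\mu_\psi$ (needed to discard $\mathcal{O}^+(x)$ when passing from $\mathcal{R}_{<1/\delta}(x)$ to $\mathcal{L}^\delta(x)$) is a point the paper leaves implicit and is worth spelling out as you did.
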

%%%%%%%%%%%%%%%%%%%%%%%%%%%%%%%%%%%%%%
%%%%%%%%%%%%%%%%%%%%%%%%%%%%%%%%%%%%%%
\begin{proof}
 Combine  Lemma \ref{lem-inclusion}, Corollary \ref{cor-R} and the definition  \eqref{defalphaq} of $\alpha(q)$.
\end{proof}\medbreak
%%%%%%%%%%%%%%%%%%%%%%%%%%%%%%%%%%%%%%

We are  now able to give a direct proof of the item 3. of Theorem
\ref{main-thm1}.

%%%%%%%%%%%%%%%%%%%%%%%%%%%%%%%%%%%%%%
 \begin{proof} {[Direct proof for Theorem \ref{main-thm1}, 3.]}
 Take the potential
$\psi:=\phi_0= -\log |T'|$. As  already observed, the
corresponding Gibbs measure $\mu_{0}$  is an invariant measure  equivalent to the Lebesgue measure.  Thus, ``$\mu_0$-almost
everywhere" is equivalent to ``Lebesgue-almost everywhere".  Hence, $
\delta(\phi,\psi)$  is also equal to
$$ \sup \left\{ \delta \geq 0:  { Leb}(\mathcal{L}^{\delta}(x))  = 1 \hbox{ for }
\mu_{\phi}-{a.e. } \ x \right\}.$$ From Proposition
\ref{thm-full-measure}, applying \eqref{valeurkappa} with the
measure $\mu_0$, the exponent  $\delta(\phi,\psi)$ coincides with  $
\frac{1}{ \alpha_{\max} }$ (defined by \eqref{defalphamax}). This
concludes the proof.
\end{proof}\medbreak
%%%%%%%%%%%%%%%%%%%%%%%%%%%%%%%%%%%%%%

We  investigate other exponents (recall that $q(\alpha)$ is the inverse function of $\alpha(q)$):
\begin{itemize}

\item
By Proposition \ref{thm-full-measure}, for any $ \epsi>0 $  and $\delta \in
]1/\alpha_+, 1/\alpha_-[ $, for $\mu_{\phi}$-a.e. $x$, we have %(recall that $q(\alpha)$ is defined  by \eqref{qalpha} for any exponent $\alpha$ in Section \ref{multi})
\begin{equation}
\label{crucial}
\mu_{q({1}/{\delta})}(\mathcal{L}^{\delta- \epsi}(x))  = 1.
\end{equation}
 
\item
For  $1/\delta= \alpha(1)= \dim {\mu_{\phi}}$, we have $
q(1/\delta)=1$ and $\mu_q=\mu_1=\mu_{\phi}$. Hence, applying
Proposition \ref{thm-full-measure} and \eqref{alpha1}, we get
\[
\sup \left\{ \delta \geq 0:  \mu_{\phi}(\mathcal{L}^{\delta}(x))  = 1 \hbox{ for }
\mu_{\phi}-a.e.\ x \right\}=\frac{1}{\dim {\mu_{\phi}}}.
\]
 Thus for every   $0< \delta< \frac{1}{\dim {\mu_{\phi}}}$,  we
have $\mu_{\phi}(\mathcal{L}^{\delta}(x))  = 1$ for
$\mu_{\phi}$-a.e. $x$.
\end{itemize}

%%%%%%%%%%%%%%%%%%%%%%%%%%%%%%%%%%%%%%
%%%%%%%%%%%%%%%%%%%%%%%%%%%%%%%%%%%%%%
%%%%%%%%%%%%%%%%%%%%%%%%%%%%%%%%%%%%%%
%%%%%%%%%%%%%%%%%%%%%%%%%%%%%%%%%%%%%%
%%%%%%%%%%%%%%%%%%%%%%%%%%%%%%%%%%%%%%
%%%%%%%%%%%%%%%%%%%%%%%%%%%%%%%%%%%%%%
%%%%%%%%%%%%%%%%%%%%%%%%%%%%%%%%%%%%%%
%%%%%%%%%%%%%%%%%%%%%%%%%%%%%%%%%%%%%%
%%%%%%%%%%%%%%%%%%%%%%%%%%%%%%%%%%%%%%
%%%%%%%%%%%%%%%%%%%%%%%%%%%%%%%%%%%%%%

\section{Multiple-quasi-Bernoulli inequalities and hitting lemmas}
\label{sechitt}

%%%%%%%%%%%%%%%%%%%%%%%%%%%%%%%%%%%%%%
%%%%%%%%%%%%%%%%%%%%%%%%%%%%%%%%%%%%%%
%%%%%%%%%%%%%%%%%%%%%%%%%%%%%%%%%%%%%%
%%%%%%%%%%%%%%%%%%%%%%%%%%%%%%%%%%%%%%
%%%%%%%%%%%%%%%%%%%%%%%%%%%%%%%%%%%%%%
%%%%%%%%%%%%%%%%%%%%%%%%%%%%%%%%%%%%%%
%%%%%%%%%%%%%%%%%%%%%%%%%%%%%%%%%%%%%%
\subsection{Multiple-quasi-Bernoulli inequalities.}  

Let $\phi $ be a normalized potential, i.e. $P(\phi)=0$.
The Gibbs property (\ref{Gibbs}) of $\mu_\phi$ can be
written as
 \begin{equation}\label{GibbsProperty}
\ \ \ \ \    \ \forall  \, x\in [0,1], \ \forall \, n\geq
      1, \qquad
    \frac{1}{\gamma} e^{S_n\phi(x)} \le  \mu_\phi(I_n(x)) \le \gamma
      e^{S_n\phi(x)} .\end{equation} 

It is classical that (\ref{GibbsProperty}) implies  the  
quasi-Bernoulli property of $\mu_\phi$.
%%%%%%%%%%%%%%%%%%%%%%%%%%%%%%%%%%%%%%
\begin{lemma}
For any  basic intervals $A$ and $B$ of   generation $n_A $ and $n_B
$ respectively,
 \begin{equation}\label{QBernoulliProperty}
    \frac{1}{\gamma^3}
     \mu_\phi(A) \mu_\phi(B) \le \mu_\phi(A \cap T^{-n_A} B) \le \gamma^3
     \mu_\phi(A) \mu_\phi(B).
 \end{equation}
\end{lemma}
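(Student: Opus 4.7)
The plan is to exploit directly the Gibbs estimate \eqref{GibbsProperty} together with the additivity of Birkhoff sums, with the only subtlety being that $A\cap T^{-n_A}B$ is itself a basic interval of generation $n_A+n_B$ (or empty, in which case the inequalities are trivial). First I would fix $A=I_{i_1\cdots i_{n_A}}\in\mathcal{G}_{n_A}$ and $B=I_{j_1\cdots j_{n_B}}\in\mathcal{G}_{n_B}$, and observe that if $A\cap T^{-n_A}B$ is non-empty, then by the Markov property it coincides with the basic interval $I_{i_1\cdots i_{n_A}j_1\cdots j_{n_B}}\in\mathcal{G}_{n_A+n_B}$. The case of an empty intersection gives $\mu_\phi(A\cap T^{-n_A}B)=0$ and the claim holds trivially since the lower bound $0\le\mu_\phi(A\cap T^{-n_A}B)$ is automatic.

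Next, I would pick an arbitrary $z\in A\cap T^{-n_A}B$ and apply the Gibbs estimate \eqref{GibbsProperty} three times, at $z$ for the intervals $A$ and $A\cap T^{-n_A}B$, and at $T^{n_A}z\in B$ for the interval $B$. The key algebraic identity is the cocycle property of Birkhoff sums,
$$S_{n_A+n_B}\phi(z)=S_{n_A}\phi(z)+S_{n_B}\phi(T^{n_A}z),$$
which allows the exponential factor in the Gibbs bound for $\mu_\phi(A\cap T^{-n_A}B)$ to be split as the product of the exponential factors appearing in the bounds for $\mu_\phi(A)$ and $\mu_\phi(B)$.

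Combining the upper Gibbs bound on $\mu_\phi(A\cap T^{-n_A}B)$ with the lower Gibbs bounds on $\mu_\phi(A)$ and $\mu_\phi(B)$ produces the factor $\gamma\cdot\gamma\cdot\gamma=\gamma^3$ on the right of \eqref{QBernoulliProperty}; the reverse combination, upper bounds on $\mu_\phi(A)$ and $\mu_\phi(B)$ together with the lower bound on $\mu_\phi(A\cap T^{-n_A}B)$, gives the factor $\gamma^{-3}$ on the left. Since $z$ drops out entirely (only the value of $S_{n_A}\phi(z)+S_{n_B}\phi(T^{n_A}z)$ matters, and it cancels against the corresponding exponentials in the bounds for $\mu_\phi(A)$ and $\mu_\phi(B)$), no distortion control is actually needed here beyond what is already encoded in the constant $\gamma$. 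In that sense there is no real obstacle; the only point requiring a line of explanation is why $A\cap T^{-n_A}B$ is either empty or a bona fide basic interval of generation $n_A+n_B$, which is precisely conditions (3)--(4) in Definition \ref{defmarkov}.
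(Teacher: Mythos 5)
Your argument is correct and is essentially identical to the paper's proof: pick a point $z\in A\cap T^{-n_A}B$, use that this intersection is a basic interval of generation $n_A+n_B$, split $S_{n_A+n_B}\phi(z)=S_{n_A}\phi(z)+S_{n_B}\phi(T^{n_A}z)$, and apply the Gibbs estimate \eqref{GibbsProperty} three times to collect the factor $\gamma^3$. The only (minor) addition you make is spelling out why the intersection is a basic interval, which the paper leaves implicit.
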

%%%%%%%%%%%%%%%%%%%%%%%%%%%%%%%%%%%%%%

%%%%%%%%%%%%%%%%%%%%%%%%%%%%%%%%%%%%%%
\begin{proof}
Consider any  $x \in A \cap T^{-n_A} B$. Applying
(\ref{GibbsProperty}) three times, we get   \begin{eqnarray*}
   && \mu_\phi(A \cap T^{-n_A} B)
     \ge    {\gamma^{-1}}e^{S_{n_A}\phi(x) + S_{n_B}(T^{n_A}
    x)}
     \ge  {\gamma^{-3}}   \mu_\phi(A) \mu_\phi(B), \\
        &&  \mu_\phi(A \cap T^{-n_A} B)
     \le  {\gamma} e^{S_{n_A}\phi(x) + S_{n_B}(T^{n_A}
    x)}
     \le  {\gamma^3}   \mu_\phi(A) \mu_\phi(B).  
\end{eqnarray*}
\vskip -4pt
 \end{proof} 
%%%%%%%%%%%%%%%%%%%%%%%%%%%%%%%%%%%%%%

Moreover, by  
(\ref{exp-decay}),  the  following multiple-quasi-Bernoulli
inequalities hold. The same inequalities are  referred to as a {\em
 multi-relation} for the doubling map  in~\cite{FST}.
 
%%%%%%%%%%%%%%%%%%%%%%%%%%%%%%%%%%%%%%
\begin{lemma}\label{th-multi-relation}
  Let $ \mu_\phi $ be the Gibbs measure associated with  a normalized potential
  $\phi $.
   Let $n\in \mathbb{N}, n\geq 1$ and  let $C_0, C_1, \dots, C_k
  $ be  $(k+1)$ basic intervals in $\mathcal{C}_n $.
 %  of  respective generations $n_0, n_1,\ldots, n_k $.
%  For any $j\in\{1,\cdots, k\}$, denote $\displaystyle s_j =\sum_{i=0}^{j-1}n_i$.
  %Assume that there is a constant $L>0 $ such that  for every $j\in \{0,1,\cdots, k\}$,
%  \begin{equation}
%  \label{ineg1}
% L^{-1}  2^{-n} \leq |D_j| \leq L 2^{-n},
%  \end{equation}
%  i.e. the intervals $D_j$ have comparable lengths.
Then there exist  a constant $M>0$ independent  of the choice of $n$
such that  for all integer $\omega $ large enough ($\beta$ is the constant appearing in \eqref{exp-decay}),
    \begin{equation}
    \label{res1}
  \frac{1}{\gamma^{3}}(1- M \beta^{\omega n})^{k-1} \leq \frac{\mu_\phi \Big(C_0\cap \bigcap\limits_{j=1}^{k}
  T^{-2j\omega n }C_j\Big)}{\prod_{j=0}^{k}
  \mu_\phi (C_j)} \leq  \gamma^{3}(1 + M \beta^{\omega n})^{k-1}.
\end{equation}

\end{lemma}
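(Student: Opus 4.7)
The plan is to proceed by induction on the number $k$ of basic intervals, using the exponential decay of correlations \eqref{exp-decay} as the only dynamical ingredient. The strategy is to peel off the intersections one at a time: express the full intersection as $T^{-2\omega n}$ applied to a smaller intersection, apply \eqref{exp-decay}, then invoke the induction hypothesis.

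The first step is to pin down the choice of $\omega$. Because every $C\in\C_n$ has generation at most $L'n$ (by \eqref{covering1}), the Gibbs property \eqref{GibbsProperty} gives the uniform lower bound $\mu_\phi(C)\geq\gamma^{-1}e^{-L'n\|\phi\|_\infty}$. Combining this with \eqref{exp-decay} applied to a pair $(C_0,B)$, with $B$ any measurable set and separation $2\omega n$, yields
$$\left|\frac{\mu_\phi(C_0\cap T^{-2\omega n}B)}{\mu_\phi(C_0)\mu_\phi(B)}-1\right|\leq\frac{3\Theta\beta^{2\omega n}}{\mu_\phi(C_0)}\leq 3\Theta\gamma\bigl(e^{L'\|\phi\|_\infty}\beta^{\omega}\bigr)^{n}\beta^{\omega n}.$$
Choosing $\omega$ large enough that $e^{L'\|\phi\|_\infty}\beta^{\omega}\leq 1$ (this is where the hypothesis on $\omega$ is used), the right-hand side is bounded by $M\beta^{\omega n}$ with $M:=3\Theta\gamma$, a constant independent of $n$ and $k$. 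This is the key one-step estimate that will be iterated.

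The induction is then clean. The base case $k=1$ is the key estimate applied with $B=C_1$, which places the ratio in $[1-M\beta^{\omega n},1+M\beta^{\omega n}]\subset[\gamma^{-3},\gamma^{3}]$ for $\omega$ large (using $\gamma\geq 1$). For the inductive step I would write
$$\bigcap_{j=1}^{k}T^{-2j\omega n}C_j=T^{-2\omega n}\Bigl(C_1\cap\bigcap_{j=2}^{k}T^{-2(j-1)\omega n}C_j\Bigr)=T^{-2\omega n}D,$$
apply the key estimate to $(C_0,D)$ to pick up one multiplicative factor $(1\pm M\beta^{\omega n})$, and then invoke the induction hypothesis on the $(k-1)$-fold intersection that defines $D$. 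Multiplying the two bounds produces exactly $\gamma^{\pm 3}(1\pm M\beta^{\omega n})^{k-1}\prod_{j=0}^{k}\mu_\phi(C_j)$.

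The one non-routine point is the preliminary estimate: the error term from \eqref{exp-decay} contains a factor $1/\mu_\phi(C_0)$ that can a priori grow like $e^{cn}$, and the correlation decay $\beta^{2\omega n}$ must absorb this growth while still leaving a factor $\beta^{\omega n}$ available for iteration. Forcing $\omega$ large (explicitly $\omega>L'\|\phi\|_\infty/\log(1/\beta)$) wins this race, after which the exponent $k-1$ in the final bound arises mechanically from the $k-1$ applications of the key estimate, and $M$ does not degrade with $k$.
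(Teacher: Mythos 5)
Your proof is correct, and it follows the same overall strategy as the paper's: peel off one interval per induction step using the exponential decay of correlations \eqref{exp-decay}, with the Gibbs lower bound $\mu_\phi(C)\geq\gamma^{-1}e^{-L'n\|\phi\|_\infty}$ (from \eqref{GibbsProperty} and \eqref{covering1}) ensuring that the factor $(\mu_\phi(C)+2)/\mu_\phi(C)$ appearing in \eqref{exp-decay} is swamped by $\beta^{\omega n}$ once $\omega$ is large, independently of $n$ and $k$.

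There is one genuine structural difference worth pointing out. The paper handles $C_0$ separately: it rewrites the full intersection as $C_0\cap T^{-n_0}\mathcal{B}$ (with $n_0$ the generation of $C_0$), applies the quasi-Bernoulli inequality \eqref{QBernoulliProperty} to that pair --- which is where the factor $\gamma^{\pm3}$ in \eqref{res1} originates --- and then applies the one-step decay estimate only $k-1$ times, to the preimage sets $T^{-2j\omega n}C_j$ for $j\geq 1$. You instead treat $C_0$ on the same footing as the other intervals, applying the decay estimate to all $k$ pairs and then relaxing one factor $(1\pm M\beta^{\omega n})$ into $\gamma^{\pm3}$. This buys a cleaner and more uniform induction (one estimate, no separate quasi-Bernoulli step), yields a marginally stronger conclusion, and it sidesteps a small awkwardness in the paper of invoking \eqref{QBernoulliProperty} --- stated for a pair of basic intervals --- with $B=\mathcal{B}$, which is not a single basic interval. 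One tiny point: the absorption of the spare $(1\pm M\beta^{\omega n})$ factor into $\gamma^{\pm 3}$ in your base case needs $\gamma>1$ strictly, not merely $\gamma\geq 1$ as your parenthetical says; this is harmless, since the Gibbs constant may always be enlarged.
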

%%%%%%%%%%%%%%%%%%%%%%%%%%%%%%%%%%%%%%

%%%%%%%%%%%%%%%%%%%%%%%%%%%%%%%%%%%%%%
\begin{proof}  Let $n_j $ be the generation   of $C_j$, and let $\omega$ be an integer large enough that $n_j-2\omega n \leq -1$ for all $0\leq j \leq k$.
%First, we take $\omega $ large so that $2\omega  > L $ where $L$
%is defined by (\ref{covering1}).
Observe  that
$$
     C_0 \cap \bigcap_{j=1}^k  T^{-2j\omega n }  C_j= C_0 \cap T^{-n_0} \mathcal{B}
, \ \ \mbox{where  } 
        \mathcal{B} = \bigcap_{j=1}^k  B_j \ \mbox{ and }B_j:= T^{ n_0- 2j\omega n}
        C_j.$$
%$ \mathcal{B}$ is a finite union of disjoint basic intervals, that  we denote by $B_i$'s.
 Applying  
(\ref{QBernoulliProperty}) to $A=C_0$ and to each $B=B_j$, we obtain
\begin{equation*}%\label{MR*}
 {\gamma^{-3}}
     \mu_\phi (C_0) \mu_\phi (B_j) \le \mu_\phi (C_0 \cap T^{-n_0} B_j) \le \gamma^3
     \mu_\phi (C_0) \mu_\phi (B_j).
\end{equation*}
Then, summing over all the $B_j$'s, we get
\begin{equation}\label{MR*}
 {\gamma^{-3}}
     \mu_\phi (C_0) \mu_\phi (\mathcal{B}) \le \mu_\phi (C_0 \cap T^{-n_0} \mathcal{B}) \le \gamma^3
     \mu_\phi (C_0) \mu_\phi (\mathcal{B}).
\end{equation}
The invariance of $\mu_\phi $ implies that
 $$  \mu_\phi (\mathcal{B}) =   \mu_\phi  \Big( \bigcap_{j=1}^{k}  T^{-2j \omega n }
C_j\Big).
$$
Thus, in order to get \eqref{res1},  we need only to prove that for
some constant $M$,
\begin{equation}\label{MR**}
\left(1 -  M \beta^{\omega n}\right)^{k-1}  \le \frac{\mu_\phi
\left( \bigcap_{j=1}^k  T^{-2j \omega n } C_j\right)}{\prod_{j=1}^k
\mu_\phi (C_j)}
       \le  \left(1 +  M \beta^{\omega n}\right)^{k-1}.
 \end{equation}

Recalling the exponential decay of correlation \eqref{exp-decay}, for every choice of two basic intervals $A, B$,
and for  every integer $m$, we have
  \begin{eqnarray*}
    \left| \mu_\phi (A\cap T^{-m}B) - \mu_\phi (A)\mu_\phi (B)
    \right| \leq \Theta \beta^{m} \left(\mu_\phi (A) + 2\right)\mu_\phi (B).
  \end{eqnarray*}
which can be rewritten as
\begin{equation}
\label{intermed1}
   \left( 1 - \Theta \beta^m  \frac{\mu_\phi (A)+2}{\mu_\phi (A)}
   \right)   \le \frac{\mu_\phi (A\cap T^{-m}B)}{\mu_\phi (A)\mu_\phi (B)}
   \le
\left( 1 + \Theta \beta^m  \frac{\mu_\phi (A)+2}{\mu_\phi (A)}
   \right).
\end{equation}
Consider the intervals $C_1, \cdots, C_k$ and observe that
\begin{eqnarray}\label{intersection-decomp}  \bigcap_{j=1}^k  T^{-2j\omega n} C_j =
\big(T^{-2\omega n} C_1 \big) \bigcap  \Big(T^{-2\omega n} \big(
\bigcap_{j=2}^k T^{ -2(j-1)\omega n} C_j  \big) \Big).
\end{eqnarray}
Iterating (\ref{intersection-decomp}), we apply the double-sided
inequality \eqref{intermed1} inductively to obtain
 \begin{eqnarray}\label{multi-prod}
 && \prod_{j=1}^{k-1}
      \left( 1  -   \Theta \beta^{2\omega n} \frac{\mu_\phi (C_j)+2}{\mu_\phi (C_j)}
   \right) \\
   &&  \nonumber \hspace{2mm}
    \le
   \frac{\mu_\phi (\bigcap_{j=1}^k  T^{-2j\omega n} C_j)}{\prod_{j=1}^k \mu_\phi (C_j)}
          \le
     \prod_{j=1}^{k-1}
     \left( 1 + \Theta \beta^{2\omega n} \frac{\mu_\phi (C_j)+2}{\mu_\phi (C_j)}
   \right).
\end{eqnarray}
  By the Gibbs property (\ref{GibbsProperty}), we have for $1\leq j\leq k$,
\[\frac{\mu_\phi (C_j)+2}{\mu_\phi (C_j)}\leq \frac{3}{\mu_\phi (C_j)}\leq 3 \gamma\cdot
e^{-  S_{n_j} \phi(x)}\leq 3 \gamma\cdot
e^{- n_j  ({\rm min}_{x\in\zu}( \phi(x)))} .\]
% Recall that the
%intervals $D_j$ have approximatively the same length, i.e. for every
%$j$, $L^{-1} 2^{-n} \leq |D_j| \leq L 2^{-n}$. By the distortion
%property \eqref{distor}, one can find another constant $L' >0$  such
%that for every $j\in\{0,1,\cdots, k\}$,
%$$(L')^{-1} \cdot n \leq n_j \leq  L' \cdot n.$$
% Hence
%\[\frac{\mu_\phi (D_j)+2}{\mu_\phi (D_j)} \leq  3 \gamma\cdot
%e^{  -  nL'  ({\rm min}_{x\in\zu}( \phi(x)))}.\]
Since $
\phi$ is normalized, ${\rm min}_{x\in\zu}( \phi(x))$ is negative. Recalling   (\ref{covering1}), we find that
\[\frac{\mu_\phi (C_j)+2}{\mu_\phi (C_j)} \leq 3 \gamma\cdot
e^{- L' n ({\rm min}_{x\in\zu}( \phi(x)))} .\]

For $\omega$ large enough, $e^{- L'({\rm
min}_{x\in\zu}(\phi(x)))}< \beta^{-\omega}$.  Then each  term in the
product on the right side of \eqref{multi-prod} can be bounded from
above by
\[ 1+ \Theta \beta^{2\omega n}  \cdot 3 \gamma
 \beta^{-n\omega}\leq  1+  M \beta^{ \omega n},\]
 where $M$ is some constant depending on $\gamma,\Theta$.
 Then \eqref{MR**} is obtained by multiplying $k-1$ identical terms.
 One gets the lower bound similarily.
\end{proof} 
%%%%%%%%%%%%%%%%%%%%%%%%%%%%%%%%%%%%%%

%%%%%%%%%%%%%%%%%%%%%%%%%%%%%%%%%%%%%%
%%%%%%%%%%%%%%%%%%%%%%%%%%%%%%%%%%%%%%
%%%%%%%%%%%%%%%%%%%%%%%%%%%%%%%%%%%%%%
%%%%%%%%%%%%%%%%%%%%%%%%%%%%%%%%%%%%%%
%%%%%%%%%%%%%%%%%%%%%%%%%%%%%%%%%%%%%%
%%%%%%%%%%%%%%%%%%%%%%%%%%%%%%%%%%%%%%
%%%%%%%%%%%%%%%%%%%%%%%%%%%%%%%%%%%%%%
%%%%%%%%%%%%%%%%%%%%%%%%%%%%%%%%%%%%%%
%%%%%%%%%%%%%%%%%%%%%%%%%%%%%%%%%%%%%%
\subsection{Big hitting probability lemma.}  

Lemma \ref{Big-hitting} illustrates   that   intervals with  small local dimension for $\mu_\phi$ are hit by the balls $B(T^n x, 1/n^\delta)$ with big
probability.
 %%%%%%%%%%%%%%%%%%%%%%%%%%%%%%%%%%%%%%
\begin{lemma} \label{Big-hitting}
Let $h$ and $ \epsi$ be two positive real numbers. Consider $N$
distinct basic intervals $C_1, \cdots, C_N$ in $\mathcal{C}_n$
satisfying $\mu_\phi(C_i) \ge |C_i|^{h -  \epsi}$.
Set
$$
 \mathcal{C}_{n,N,h}= \Big\{x \in \zu: \, \exists \, C \in \{C_i\}_{i=1,\ldots, N} \text{ such that }
\tau(x,C) > |C|^{-h}\Big\}.
$$
Then there exists an integer $n_h \in \N$  independent of $N$ such that
\begin{align*}
\mbox{for every   $n\geq n_h$,  } \ \ \mu_{\phi} \big( \mathcal{C}_{n,N,h} \big) \le  2^{-  n}.
\end{align*}
\end{lemma}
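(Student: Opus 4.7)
The strategy is, for each individual interval $C_i$, to bound $\mu_\phi(\{x : \tau(x,C_i) > |C_i|^{-h}\})$ by a super-exponentially small quantity in $n$, and then to apply a union bound over the $N$ intervals (noting that $N \leq |\mathcal{C}_n| \leq n\, Q^{L'n}$ grows only exponentially in $n$, by \eqref{covering1}). The key device is the multiple-quasi-Bernoulli inequality of Lemma \ref{th-multi-relation}, used on a sparse sequence of iterates so that the decorrelation correction factor $(1 + M\beta^{\omega n})^{k-1}$ stays controlled.

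\textbf{Single interval estimate.} Fix $C=C_i\in\mathcal{C}_n$ and write $K=\lfloor|C|^{-h}\rfloor$; by \eqref{covering}, $|C|\asymp 2^{-n}$ and $K\asymp 2^{nh}$. Choose once for all an integer $\omega$ large enough that $\beta^\omega < 2^{-h}$, and set $t_j := 2j\omega n$ for $j=1,\dots,k$ with $k=\lfloor K/(2\omega n)\rfloor$. Since $\tau(x,C)>K$ forces $T^{t_j}x\notin C$ for every $j$,
\[
\{\tau(x,C)>K\} \ \subset\ \bigcap_{j=1}^k T^{-t_j}(\zu\setminus C).
\]
As $\mathcal{C}_n$ partitions $\zu$ into basic intervals (up to a boundary of $\mu_\phi$-measure zero, since $\mu_\phi$ has no atoms), I decompose $\zu\setminus C = \bigsqcup_i D_i$ over the other members of $\mathcal{C}_n$ and expand
\[
\mu_\phi\Big(\bigcap_{j=1}^k T^{-t_j}(\zu\setminus C)\Big) \ =\ \sum_{(i_1,\dots,i_k)} \mu_\phi\Big(\bigcap_{j=1}^k T^{-t_j}D_{i_j}\Big).
\]
Using $T$-invariance to shift by $T^{-2\omega n}$, each term falls under Lemma \ref{th-multi-relation} and is bounded by $\gamma^3 (1+M\beta^{\omega n})^{k-2} \prod_j \mu_\phi(D_{i_j})$. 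The sum over $(i_1,\dots,i_k)$ then collapses the product to $\bigl(\sum_i\mu_\phi(D_i)\bigr)^k = (1-\mu_\phi(C))^k$, yielding
\[
\mu_\phi(\{\tau(x,C)>K\}) \ \leq\ \gamma^3 (1+M\beta^{\omega n})^{k-2} (1-\mu_\phi(C))^k.
\]

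\textbf{Final estimates and union bound.} Applying $(1-u)^k\leq e^{-ku}$, the hypothesis $\mu_\phi(C)\geq |C|^{h-\epsi}\gtrsim 2^{-n(h-\epsi)}$ gives $k\mu_\phi(C) \gtrsim 2^{n\epsi}/n$, so $(1-\mu_\phi(C))^k \leq \exp(-c_1 2^{n\epsi}/n)$. The choice $\beta^\omega < 2^{-h}$ forces $\beta^{\omega n} k \leq 1$ for all $n$ large, so the correction factor $(1+M\beta^{\omega n})^{k-2}$ stays bounded by a universal constant $C_0$. Hence
\[
\mu_\phi\bigl(\{\tau(x,C_i)>|C_i|^{-h}\}\bigr) \ \leq\ C_0\exp\!\bigl(-c_1\,2^{n\epsi}/n\bigr).
\]
Since $N\leq n Q^{L'n}$, the union bound gives $\mu_\phi(\mathcal{C}_{n,N,h}) \leq nQ^{L'n}\,C_0\exp(-c_1 2^{n\epsi}/n)$, which is dominated by $2^{-n}$ for all $n\geq n_h$, with $n_h$ depending on $h,\epsi,\omega$ and the Gibbs constants but \emph{not} on $N$.

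\textbf{Main obstacle.} The one delicate point is the application of Lemma \ref{th-multi-relation}, which is stated only for basic intervals, to the \emph{complement} $\zu\setminus C$. The fix is the decomposition of $\zu\setminus C$ as a disjoint union of members of $\mathcal{C}_n\setminus\{C\}$ so that, after expansion, each term is a legitimate intersection of basic intervals at the widely-spaced times $t_j$. This converts the inconvenient complement into the clean closed-form factor $(1-\mu_\phi(C))^k$. Once this decomposition is in place, the geometric gap between the super-exponential decay $\exp(-c_1 2^{n\epsi}/n)$ and the exponential growth of $N$ leaves ample room for the target bound $2^{-n}$.
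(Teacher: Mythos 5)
Your proof is correct and follows essentially the same architecture as the paper's: restrict to the sparse subsequence of iterates $T^{2j\omega n}$, decompose the complement $\zu\setminus C$ into the other members of $\mathcal{C}_n$, apply the multiple-quasi-Bernoulli inequality (Lemma \ref{th-multi-relation}) to each term, and collapse the sum to the factor $(1-\mu_\phi(C))^k$ before a union bound over the $C_i$'s. The one place where you diverge is in taming the correction factor $(1+M\beta^{\omega n})^{k}$: you fix $\omega$ so large that $\beta^{\omega}<2^{-h}$, which makes $k\beta^{\omega n}\to 0$ and hence the whole correction factor uniformly bounded; the paper instead picks $\omega$ depending only on the uniform Gibbs lower bound $\mu_\phi(\widetilde C)\geq L^{-H}2^{-nH}$ and absorbs the correction into the base via $(1+M\beta^{\omega n})(1-\mu_\phi(\widetilde C))\leq 1-\mu_\phi(\widetilde C)/2$. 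Your variant is slightly more direct but makes $\omega$ depend on $h$, whereas the paper's choice of $\omega$ is uniform in $h$; since the lemma's $n_h$ is allowed to depend on $h$ anyway, both are legitimate. One small point worth making explicit: the $\omega$ you fix must simultaneously satisfy the largeness requirements in Lemma \ref{th-multi-relation} (in particular $2\omega>L$ and the conditions involving $L'$ and $\min\phi$), but since all these are ``take $\omega$ sufficiently large'' constraints this is harmless.
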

%%%%%%%%%%%%%%%%%%%%%%%%%%%%%%%%%%%%%%

%\begin{remark}
%The independence with respect to  $N$ in Lemma
%\ref{Big-hitting} follows from the fact that the number  $N$ of
%basic intervals of generation $n$ is upper bounded by $(M'')^n$, with
%  $M''$ depending on the  Markov map only, and
%that these intervals do not overlap.
%\end{remark}
%
%\begin{remark}
%The independence with respect to  $N$ in Lemma
%\ref{Big-hitting} follows two facts. One is that the number  $N$ of
%basic intervals of generation $n$ is upper bounded by $(M'')^n$, with
%  $M''$ depending on the  Markov map only. The other one is
%that these intervals do not overlap.
%\end{remark}

%%%%%%%%%%%%%%%%%%%%%%%%%%%%%%%%%%%%%%
\begin{proof}
Fix one interval  $\widetilde C$ among  the $N$ basic intervals $C_1, \cdots, C_N$.
Let
$$\mathcal{X}_{\widetilde C}:=\Big\{x\in [0,1] :  \, \forall \ n\leq  |\widetilde C|^{-h},  \ \ \ T^n x \not\in \widetilde C  \Big\}.$$
Obviously we have the embedding property $ \mathcal{C}_{n,N,h} \subset \bigcup_{i=1}^{N} \ \mathcal{X}_{C_i},$
so we are going to bound from above each
$\mup(\mathcal{X}_{\widetilde C})$.  Pick up an integer $\omega$
such that $2\omega >L $ ($L$ appears in 
\eqref{covering}), and set  $m_{\widetilde C}= \lfloor
 |\widetilde C|^{-h}/(2\omega n ) \rfloor$. Then by   definition of
$m_{\widetilde C}$, we have  
$$\mathcal{X}_{\widetilde C}\subset \bigcap_{j=0}^{m_{\widetilde C}} \Big\{x\in [0,1] : T^{2j\omega n} x
\not\in {\widetilde C} \Big\} = \bigcap_{j=0}^{m_{\widetilde C}}   \
\Big(\zu \setminus T^{-2j\omega n}( {\widetilde C}) \Big). $$
%Let us denote by $\mathcal{G}_{\widetilde C}$ the set of basic intervals $I$ (of any generation $n$) such that
%$$L^{-1}  |{\widetilde C}| \leq |I|  \leq L| {\widetilde C}|,$$
%where $L$ is the constant appearing in \eqref{distor}, describing
%the distorsion of the basic intervals.
We know that
the union of the intervals belonging to $\mathcal{C}_{n}$ is the
whole interval $\zu$. Observe also that the cardinality of
$\mathcal{C}_{n}$ is of order $2^n$.
Let us denote by $\mathcal{C}_n(\widetilde C)$ the subset of $\mathcal{C}_n$ constituted of  the basic intervals disjoint from $\widetilde C$.

Since $2\omega>L $,  the  definition of  $\mathcal{X}_ {\widetilde C}$ implies that
for any point $x\in \mathcal{X}_ {\widetilde C}$, there is a
choice of $m_{\widetilde C}+1$  basic intervals
$(D_0,\dots,D_{m_{\widetilde C}})$ all belonging to $\mathcal{C}_{n}(\widetilde C)$, such that $x\in D_0 \cap T^{-2\omega
n}D_1 \cap \cdots \cap T^{-2m_{\widetilde C}\omega
n}D_{m_{\widetilde C}}$. From this, we deduce that
\begin{align*}
\mu_\phi(\mathcal{X}_ {\widetilde C}) & \le
\sum_{(D_0,\dots,D_{m_{\widetilde C}}) \in (\mathcal{C}_{n}(\widetilde C))^n} \mu_\phi( D_0 \cap T^{-2\omega
n}D_1 \cap \cdots \cap T^{-2m_{\widetilde C}\omega
n}D_{m_{\widetilde C}}).
\end{align*}
 \vspace{-1mm}We choose $\omega$  large enough   that Lemma \ref{th-multi-relation}
can be applied. Inequality \eqref{res1} yields
\begin{eqnarray*}
\mu_\phi(\mathcal{X}_ {\widetilde C})
& \le & \gamma^3( 1 + M\beta^{\omega n})^{m_{\widetilde C}} \sum_{(D_0,\dots,D_{m_{\widetilde C}}) \in (\mathcal{C}_{n}(\widetilde C))^n}  \ \  \prod_{j=0}^{m_{\widetilde C}} \mu_\phi(T^{-2j\omega n}D_i)\\
%& = & \gamma^3( 1 + M\beta^{\omega n})^{m_{\widetilde C}} \sum_{(D_0,\dots,D_{m_{\widetilde C}}) \in (\mathcal{C}_{n}(\widetilde C))^n}   \ \ \prod_{j=0}^{m_{\widetilde C}} \mu_\phi( D_i)\\
& = & \gamma^3( 1 + M\beta^{\omega n})^{m_{\widetilde C}} \Big(
\sum_{D \in \mathcal{C}_n(\widetilde C)}  \mu_\phi(D) \Big)
^{m_{\widetilde C}+1}  .\end{eqnarray*}  
Since the intervals of $\mathcal{C}_n$ have
disjoint interiors,  $    \sum_{D \in \mathcal{C}_n(\widetilde C)}  \mu_\phi(D) \leq 1 - \mu_\phi(\widetilde C).$
Hence \vspace{-1mm}
\begin{eqnarray*}
\mu_\phi(\mathcal{X}_ {\widetilde C})
& \leq & \gamma^3 (1 + M\beta^{\omega n})^{m_{\widetilde C}}    (1 - \mu_\phi(\widetilde C) )^{m_{\widetilde C}+1}\\
& \le    & \frac{\gamma^3 }{1+M\beta^{\omega n}}  \left((1 +
M\beta^{\omega n})(1 - \mu_\phi(\widetilde C))\right)^{m_{\widetilde
C}+1}.
\end{eqnarray*}
 If $\omega$ is chosen large enough,
 \begin{equation}
\label{attention}
 (1 + M\beta^{\omega n})(1 - \mu_\phi(\widetilde C)) \leq 1 -   \mu_\phi(\widetilde C)/2.
\end{equation}
Thus we finally obtain
 \begin{equation}
\label{attention2}
\mu_\phi(\mathcal{X}_ {\widetilde C}) \leq   \frac{\gamma^3 }{1+M\beta^{\omega n}}
\big ( 1 -   {\mu_\phi(\widetilde C)} /2 \big )^{m_{\widetilde C}+1} .
\end{equation}
Here, we emphasize that $\omega$ can be chosen  large enough  that
\eqref{attention}, and thus  \eqref{attention2}, can be realized
simultaneously for all $\widetilde C$ and for all $n$. In fact, from
the Gibbs property (\ref{Gibbs}) of  $\mu_\phi$, there
exists a maximal exponent $H>0$ such that 
$$\mbox{for every basic interval
$\widetilde C$ of any generation $n$, } \ \mu_\phi(\widetilde C) \geq |\widetilde C| ^H \geq  L^{-H}2^{-n H}.$$ Thus, for all $\widetilde C \in \mathcal{C}_n$,
$\displaystyle
  \frac{1 - \frac{1}{2} {\mu_\phi(\widetilde C)}  }{1 - \mu_\phi(\widetilde
  C)} \geq \frac{1-\frac{1}{2}L^{-H}2^{-n H}}{1-L^{-H}2^{-n H}}$. 
So,  we can choose     $\omega$ so that
$$
1+M\beta^{\omega n} \leq \frac{1-\frac{1}{2}L^{-H}2^{-n
H}}{1-L^{-H}2^{-n H}} \leq  \frac{1 - \frac{1}{2} {\mu_\phi(\widetilde C)}  }{1 - \mu_\phi(\widetilde
  C)},
$$
which implies \eqref{attention}   for all $\widetilde C$. Summing over all $\widetilde C \in \{C_1,\ldots, C_N\}$,  by
\eqref{attention2} and the definition of $m_{\widetilde C}$, we have
% (recall that   $m_{\widetilde C}= \lfloor
% |\widetilde C|^{-h}/(2\omega n) \rfloor$ and $\mu_\phi(\widetilde C) \geq |\widetilde C|^{h-\epsi}$ by assumption)
\begin{align*}
 \mu_{\phi}\Big(  \mathcal{C}_{n,N,h} \Big)
%& \le  \frac{\gamma^3 }{1+M\beta^{\omega n}}  \sum_{\widetilde C}   \Big ( 1 - \frac{1}{2} \mu_\phi(\widetilde C))\Big )^{m_{\widetilde C}+1}\\
&  \le\frac{\gamma^3 }{1+M\beta^{\omega n}}  \sum_{\widetilde C}
\big ( 1 -  {\mu_\phi(\widetilde C)} /2 \big )^{{| \widetilde C|^{-h}}/{(2\omega n)}}\\
&\le \frac{\gamma^3 }{1+M\beta^{\omega n}} \sum_{\widetilde C}
\big ( 1 -  {\mu_\phi(\widetilde C)} /2 \big )^{{|\widetilde C|^{- \epsi}}/(2\omega n\mu_\phi(\widetilde C))}\\
&=\frac{\gamma^3 }{1+M\beta^{\omega n}}   \sum_{\widetilde C} \exp
\Big ( \frac{|\widetilde C|^{- \epsi}}{2\omega n\mu_\phi(\widetilde
C)}
\log \big ( 1 -  {\mu_\phi(\widetilde C)} /2 \big ) \Big ) \\
& \le \frac{\gamma^3 }{1+M\beta^{\omega n}}  \sum_{\widetilde C}
\exp \Big ( \frac{ -|\widetilde C|^{- \epsi}}{4\omega n}\Big ).
\end{align*}
 Now, recalling   \eqref{covering}, we have $ |\widetilde C|^{- \epsi} \geq L^{-\epsi}2^{\epsilon n} $. Since the number $N$ of possible
 choices for $\widetilde C$ is less than $L\cdot 2^n$,  we have
\begin{align*}
 \mu_{\phi}\Big (
 \mathcal{C}_{n,N,h}\Big)\le \frac{\gamma^3 }{1+M\beta^{\omega n}}  \sum_{\widetilde C} \exp \left(\frac{ - 2^{\epsilon n}}{4\omega n L^{\epsi}}\right) \le   \frac{L\gamma^3}{1+M\beta^{\omega n}}  2^{ \frac{1}{\log 2} \big(n \log 2- \frac{2^{\epsilon n}}{4\omega n L^{\epsi}} \big)  } .
\end{align*}
This last term is independent of $N$ and less than $2^{-  n}$ for sufficiently large $n$.
\end{proof} 
%%%%%%%%%%%%%%%%%%%%%%%%%%%%%%%%%%%%%%

%%%%%%%%%%%%%%%%%%%%%%%%%%%%%%%%%%%%%%
%%%%%%%%%%%%%%%%%%%%%%%%%%%%%%%%%%%%%%
%%%%%%%%%%%%%%%%%%%%%%%%%%%%%%%%%%%%%%
%%%%%%%%%%%%%%%%%%%%%%%%%%%%%%%%%%%%%%
%%%%%%%%%%%%%%%%%%%%%%%%%%%%%%%%%%%%%%
%%%%%%%%%%%%%%%%%%%%%%%%%%%%%%%%%%%%%%
%%%%%%%%%%%%%%%%%%%%%%%%%%%%%%%%%%%%%%
%%%%%%%%%%%%%%%%%%%%%%%%%%%%%%%%%%%%%%
\subsection{Small hitting probability lemma.} We now study the
probability of hitting points with high local dimension for
$\mu_\phi$.   The arguments are close to those of
\cite{FST}.

%%%%%%%%%%%%%%%%%%%%%%%%%%%%%%%%%%%%%%
\begin{lemma} \label{Small hitting}
Let $0<a<1$, $0< c <b<1$ and $\eta >b-c$. Consider
$2^{b  n}$ different basic intervals  $C_1, \cdots, C_{2^{b n}}$ in
$\C_n $.
 Assume that for every $j\in \{1,\cdots, 2^{b n}\}$,
$$ \mu_\phi(C_j) \le 2^{-(a+\eta)n}.$$
Set 
$$\displaystyle \mathcal{X}_{a,b,c}:=\Big\{x: \, \tau(x,C_i) \le  2^{a n} \text{ for } 2^{c n}  \text{
distinct intervals among the}\ \{C_i\}_{i=1,.,2^{b n}}\Big\}.$$
Then there exists  an integer $n_{a,b,c}  \in \N$ such that as soon as $n\geq n_{{a,b,c}}$,
$$ \mu_{\phi} ( \mathcal{X}_{a,b,c}) \le 2^{ -n}.$$
\end{lemma}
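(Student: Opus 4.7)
The proof follows a union bound / large deviation strategy. Set $k := \lceil 2^{cn} \rceil$. A point $x \in \mathcal{X}_{a,b,c}$ must satisfy $T^{t_i}x \in C_i$ for some subset $S \subset \{1,\ldots,2^{bn}\}$ of cardinality $k$ and some tuple of times $(t_i)_{i \in S} \in \{0,\ldots,\lfloor 2^{an}\rfloor\}^S$. Hence
\[
\mu_\phi(\mathcal{X}_{a,b,c}) \leq \sum_{|S|=k}\ \sum_{(t_i)_{i \in S}} \mu_\phi\Big(\bigcap_{i \in S} T^{-t_i} C_i\Big).
\]
The heart of the proof is the quasi-independence bound
\[
\mu_\phi\Big(\bigcap_{i \in S} T^{-t_i} C_i\Big) \leq \gamma^{O(k)} \prod_{i \in S} \mu_\phi(C_i) \leq \gamma^{O(k)} \cdot 2^{-(a+\eta)nk},
\]
which I would establish by ordering the times $t_{i_1} < t_{i_2} < \cdots < t_{i_k}$, using the $T$-invariance of $\mu_\phi$ to shift, and iteratively applying the quasi-Bernoulli inequality \eqref{QBernoulliProperty} to peel off one factor $\mu_\phi(C_{i_j})$ at a time.

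Combining this with the counts $\binom{2^{bn}}{k} \leq (e\cdot 2^{(b-c)n})^k$ of index subsets and $(2^{an}+1)^k$ of time tuples, I arrive at
\[
\mu_\phi(\mathcal{X}_{a,b,c}) \leq (2e\gamma^3)^k \cdot 2^{(b-c-\eta)nk}.
\]
The hypothesis $\eta > b-c$ makes the coefficient of $nk$ strictly negative, and since $nk = n \cdot 2^{cn}$ grows much faster than any term linear in $k$ coming from the constants $2e\gamma^3$, this upper bound is smaller than $2^{-n}$ once $n$ is large enough, as required.

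The main technical obstacle is the iterated application of \eqref{QBernoulliProperty}: that inequality demands the time gap between the two consecutive constraints to equal the generation of the preceding basic interval, which is $\Theta(n)$ by \eqref{covering1}. When two successive ordered times $t_{i_{j+1}} - t_{i_j}$ are closer than this threshold, one must either merge the two constraints into a single cylinder of higher generation (thereby discarding the factor $\mu_\phi(C_{i_{j+1}})$) or invoke the exponential decay of correlations of Theorem~\ref{exponentialdecay} with a controlled multiplicative loss. Because the strict slack $\eta - (b-c) > 0$ in the hypothesis leaves room to spare in the exponent, these losses can be absorbed without affecting the conclusion; this is the adaptation of the argument of \cite{FST} to the Markov setting that the paper alludes to.
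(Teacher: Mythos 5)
Your high-level architecture — union bound over subsets of indices and over time tuples, a quasi-independence bound, then a combinatorial count — is indeed the architecture of the paper's proof, and you correctly locate the crux: the quasi-Bernoulli inequality \eqref{QBernoulliProperty} and the decay-of-correlations estimate are only useful when consecutive hitting times are at least $\Theta(n)$ apart. But the two fixes you propose for closely-spaced times do not close the gap, and the one the paper actually uses is different.

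Your ``merging'' fix fails in the worst case: the $k=2^{cn}$ hitting times are distinct integers but could all be consecutive, in which case every pair of neighbours is within distance $1 \ll \Theta(n)$, all constraints merge into a single cylinder, and you retain only one factor $\mu_\phi(C_{i_1})$. A loss of this order — proportional to $k$ rather than to $\log k$ or $\mathrm{poly}(n)$ — cannot be absorbed by the slack $\eta-(b-c)>0$, since your final bound would collapse to $\mathrm{poly}\cdot 2^{-(a+\eta)n}$ while the combinatorial prefactor $\binom{P}{k}\binom{K}{k}k!$ is of order $2^{(a+b-c)n\cdot 2^{cn}}$. Your ``decay of correlations'' fix has the same difficulty: in \eqref{exp-decay} the error $\Theta\beta^m\bigl(\mu_\phi(A)+2\bigr)\mu_\phi(B)$ dominates the main term $\mu_\phi(A)\mu_\phi(B)$ unless $\beta^m \lesssim \mu_\phi(A)\geq |A|^H\asymp 2^{-Hn}$, i.e.\ unless $m\gtrsim n$ — precisely the regime you were trying to escape.

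The paper's resolution is a \emph{subsampling by index}, not a merge by time-cluster: from the $N=2^{cn}$ sorted hitting times $\ell_1<\cdots<\ell_N$ it keeps $t_p:=\ell_{2\omega n p}$, $p=1,\ldots,N'$ with $N'=\lfloor N/(2\omega n)\rfloor$. Because the $C_i$ are pairwise disjoint the hitting times are distinct integers, so $\ell_{j+2\omega n}\ge \ell_j+2\omega n$ automatically, and the retained times satisfy the $2\omega n$ spacing unconditionally — even when the original times are all consecutive. Lemma~\ref{th-multi-relation} (inequality \eqref{MR**}) then applies to the $N'$ retained constraints, and the only loss is the polynomial-in-$n$ drop from $N$ to $N'$, which is harmless because the gain per retained factor, $2^{(b-c-\eta)n}$, is exponential. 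This subsampling step is the missing idea; without it your argument has a genuine gap at its central estimate.
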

%%%%%%%%%%%%%%%%%%%%%%%%%%%%%%%%%%%%%%

%%%%%%%%%%%%%%%%%%%%%%%%%%%%%%%%%%%%%%
\begin{proof} Let us denote   $K := 2^{a n}, P := 2^{b n} , N := 2^{c n} $.
When $x\in \mathcal{X}_{a,b,c}$,  there exist  $N$ integers $0< \ell_1<\ell_2<\cdots
<\ell_N \leq  K$ and $N$ different basic intervals $C_{i_1}, C_{i_2}, \cdots,
C_{i_N}$ such that
\begin{equation}
\label{deftau0}
    T^{\ell_1}x \in C_{i_1}, \ \
    T^{\ell_2}x \in C_{i_2},\ \ \cdots, \ \  T^{\ell_N}x \in
    C_{i_N}.
\end{equation}
Let $N':=\lfloor N/(2\omega n) \rfloor $ and let $(t_p)_{p=1}^{N'} $
be a subset of $(\ell_j)_{j\in\{1,\cdots, N\}} $ defined by
$ \ t_{p } = \ell_{2 \omega n p}.$
Let $j_{p }$ be the unique index $i$ such that $T^{t_{p }}x
\in C_{i}$ in \eqref{deftau0}.
If $x\in \mathcal{X}_{a,b,c}$,  
\begin{equation}\label{Eq-SmallProbability}
    T^{t_1}x \in C_{j_1}, \ \
    T^{t_2}x \in C_{j_2},\ \ \cdots, \ \  T^{t_{N'}}x \in
    C_{j_{N'}},
\end{equation}
where $C_{j_1},\dots, C_{j_{N'}} $ are $N'$ different basic
intervals  among the intervals  $C_1, \ldots, C_P$.

\sk

Fix now $N'$  basic intervals  $C_{j_1},\dots, C_{j_{N'}} $  among
the intervals  $C_1, \ldots, C_P$ and  fix also the integers $ t_1 <
\ldots < t_{N'} \leq K$. Consider the set $\widetilde{
\mathcal{X} }$ of points $x$ such that
\eqref{Eq-SmallProbability} is satisfied.  This set
$\widetilde{\mathcal{X}}$ depends on $a$, $b$, $c$, and on the
intervals and the integers we have chosen. As said above,
$\mathcal{X}_{a,b,c} \subset \bigcup\widetilde{\mathcal{X} }$, where
the union is taken over all possible choices of parameters
$C_{j_1},\dots, C_{j_{N'}} $   and $ t_1 < \ldots < t_{N'}$. In
order to bound from above the $\mup$-measure of $\mathcal{X}_{a,b,c}
$, we will first study the $\mup$-measure of one set
$\widetilde{\mathcal{X}}$.
 Applying (\ref{MR**}) again and using the same arguments as in
 Lemma \ref{Big-hitting}, we see that the $\mu_\phi$-measure of
 $ \widetilde{\mathcal{X} }$ 
 is bounded from above by
\begin{equation}
\label{bigfin} \mup(\widetilde{\mathcal{X}}) \leq  \max_{1\le i\le
L} \mu_\phi(C_i)^{N'} (1+M\beta^{2\omega n})^{N'}.
\end{equation}

It remains us to estimate the maximal number of choices for the
associated intervals  $C_{j_1},\dots, C_{j_{N'}} $ and integers
$(t_1,\ldots, t_{N'})$.
We have ${P \choose N'}$ possible choices for the $N'$
different basic intervals  among the list of  $P$ intervals $C_1,
\ldots, C_P$, and there are at most ${K \choose N'}$ choices for the
integers  $t_1<t_2<\cdots <t_{N'}<K$. Finally there are $N'!$ ways
to arrange the $N'$ intervals.
Combining this  and \eqref{bigfin}, we find 
$$
\mu_\phi(\mathcal{X}_{a,b,c}) \, \leq  \,
\sum_{\widetilde{\mathcal{X}}} \mup(\widetilde{\mathcal{X}})  \,
\leq \,  {P \choose N'} {K \choose N'} \cdot N'!
\cdot\max_{C_i}\mu_\phi(C_i)^{N'}\cdot (1+M\beta^{\omega n})^{N'}.
$$ Since
\vspace{-2mm}
$$ {P\choose N'} {K \choose N'} \cdot N'! =\frac{P!}{(P-N')!}
\cdot\frac{K!}{(K-N')! } \cdot \frac{1}{N'!},
$$
 using the estimates
$\frac{P!}{(P-N')!} \le P^{N'}$, $  \frac{K!}{(K-N')!} \le  K^{N'}$, $ \frac{1}{N'!} \le \xi
\cdot \frac{e^{N'}}{{N'}^{N'}}$
 for some universal constant $\xi$, we  conclude that
\vspace{-1mm}\begin{eqnarray*}
\mu_\phi(\mathcal{X}_{a,b,c})  \leq  \xi  \cdot P^{N'}\cdot
K^{N'}\cdot e^{N'}\cdot N'^{-N'}\cdot  (\max_{C_i}
\mu_\phi(C_i))^{N'}\cdot (1+M\beta^{\omega n})^{N'}  .
\end{eqnarray*}
\vspace{-1mm}Replacing all constants $K$, $P$, $N$ by their values, we get
\begin{eqnarray*}
\mu_\phi(\mathcal{X}_{a,b,c})  \leq  \xi    \cdot
\left(2^{bn}\cdot 2^{an}\cdot e \cdot (N' )^{-1}\cdot
2^{-(a+\eta)n}\cdot (1+M\beta^{\omega n})\right)^{N'}.
\end{eqnarray*}
By definition of $N'$, we have  $(N')^{-1} \leq \frac{2\omega n}{N}
= 2\omega n 2^{-cn}$ when $\omega$ is large enough. Consequently,
the last inequality yields
 \begin{eqnarray*}
\mu_\phi(\mathcal{X}_{a,b,c}) 
%& \leq & \xi    \cdot
%\left(2^{bn}\cdot 2^{an}\cdot e \cdot
%2\omega n 2^{-cn}\cdot   2^{-(a+\eta)n}\cdot (1+M\beta^{\omega n})\right)^{N' }\\
  \le   \xi    \cdot  \left ( {e\cdot 2\omega n  \cdot (1+M\beta^{\omega n})  \cdot 2^{(b-c -
\eta) n}} \right )^{N'}.
\end{eqnarray*}
By assumption, $\eta>b-c$,  so the quantity between brackets tends to
0 exponentially fast. In particular, it is less than 1/2.
Using the fact that  $  N' \geq   {2^{cn}}/{2\omega n}$, we get  
\begin{eqnarray*}
\mu_\phi(\mathcal{X}_{a,b,c}) \leq  \xi   \cdot   2^{
-\frac{2^{cn}}{2\omega n}}.
\end{eqnarray*}
The right term in the above inequality is less than   $2^{-  n}$ when $n$ becomes large.
\end{proof} 
%%%%%%%%%%%%%%%%%%%%%%%%%%%%%%%%%%%%%%

% we deduce that $N'\leq N =2^{cn}$ (which is obvious from the definitions) and

\vspace{-1mm}

%%%%%%%%%%%%%%%%%%%%%%%%%%%%%%%%%%%%%%
%%%%%%%%%%%%%%%%%%%%%%%%%%%%%%%%%%%%%%
%%%%%%%%%%%%%%%%%%%%%%%%%%%%%%%%%%%%%%
%%%%%%%%%%%%%%%%%%%%%%%%%%%%%%%%%%%%%%
%%%%%%%%%%%%%%%%%%%%%%%%%%%%%%%%%%%%%%
%%%%%%%%%%%%%%%%%%%%%%%%%%%%%%%%%%%%%%
%%%%%%%%%%%%%%%%%%%%%%%%%%%%%%%%%%%%%%
%%%%%%%%%%%%%%%%%%%%%%%%%%%%%%%%%%%%%%
%%%%%%%%%%%%%%%%%%%%%%%%%%%%%%%%%%%%%%
%%%%%%%%%%%%%%%%%%%%%%%%%%%%%%%%%%%%%%
\section{Part I of the spectrum: $1/\delta < \alpha(1)=\dim  \mu_\phi$}
\label{sec-lower1}

%%%%%%%%%%%%%%%%%%%%%%%%%%%%%%%%%%%%%%
%%%%%%%%%%%%%%%%%%%%%%%%%%%%%%%%%%%%%%
%%%%%%%%%%%%%%%%%%%%%%%%%%%%%%%%%%%%%%
%%%%%%%%%%%%%%%%%%%%%%%%%%%%%%%%%%%%%%
%%%%%%%%%%%%%%%%%%%%%%%%%%%%%%%%%%%%%%
%%%%%%%%%%%%%%%%%%%%%%%%%%%%%%%%%%%%%%
\subsection{Upper bound for  $ \dim  \mathcal{L}^\delta(x) $.}  
By (\ref{I}), we need only to show the following.
%%%%%%%%%%%%%%%%%%%%%%%%%%%%%%%%%%%%%%
\begin{proposition}\label{upper1}
For every $0<s\le \dim {\mu_\phi}$, for every $x\in \zu$,  we have
\begin{equation}
\label{UpperBoundIhn2} \dim    \big(  \mathcal{R}_{\leq s }(x)  \big) \le s.
  \end{equation}
\end{proposition}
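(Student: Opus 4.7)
The plan is to establish the upper bound by a direct covering argument, using only the definition of $R(x,y)$; the constraint $s \le \dim\mu_\phi$ will play no role in this particular bound (it matters only because Part I is where this is used).

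First, I would unwind the definition of $\mathcal{R}_{\leq s}(x)$. If $y$ satisfies $R(x,y)\le s$, then for every $\epsilon>0$ one has $\log\tau_r(x,y)/(-\log r)\le s+\epsilon$ for arbitrarily small $r$, i.e.\ there exist $r\to 0$ and integers $n=\tau_r(x,y)\le r^{-(s+\epsilon)}$ with $y\in B(T^n x,r)$. Discretizing by placing $r$ in the dyadic interval $[2^{-k-1},2^{-k})$, I can rephrase this as: for every threshold $k_0$, there exist $k\ge k_0$ and $1\le n\le 2^{(k+1)(s+\epsilon)}$ with $y\in B(T^n x,2^{-k})$.

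Therefore, for every integer $k_0$,
\begin{equation*}
\mathcal{R}_{\leq s}(x)\ \subset\ \bigcup_{k\ge k_0}\ \bigcup_{n=1}^{\lceil 2^{(k+1)(s+\epsilon)}\rceil} B\bigl(T^n x,\,2^{-k}\bigr).
\end{equation*}
This provides a natural cover whose diameters tend to zero as $k_0\to\infty$, so it is admissible for computing the $(s+2\epsilon)$-dimensional Hausdorff pre-measure. Summing the corresponding weights,
\begin{equation*}
\sum_{k\ge k_0}\ 2^{(k+1)(s+\epsilon)}\cdot (2\cdot 2^{-k})^{s+2\epsilon}\ \le\ C\sum_{k\ge k_0} 2^{-k\epsilon},
\end{equation*}
which is finite and tends to $0$ as $k_0\to\infty$. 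Hence $\mathcal{H}^{s+2\epsilon}(\mathcal{R}_{\leq s}(x))=0$, and letting $\epsilon\downarrow 0$ yields $\dim \mathcal{R}_{\leq s}(x)\le s$, as required.

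There is no serious obstacle here: the whole proof is a bookkeeping exercise converting a $\liminf$ condition on $\tau_r(x,y)$ into a natural limsup cover by balls centered on the orbit $\{T^n x\}$. The only point to watch is getting the direction of the inequality right, namely that $n\le r^{-(s+\epsilon)}$ produces \emph{many} balls of \emph{small} radius (not the other way around), which is precisely what is needed so that the $(s+2\epsilon)$-sum converges. The upper bound for $\dim\mathcal{L}^\delta(x)$ in Part~I then follows from \eqref{I} with $s=1/\delta$.
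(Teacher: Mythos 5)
Your proof is correct and follows essentially the same covering argument as the paper's: unwind the $\liminf$ in the definition of $R(x,y)$ into a limsup cover by balls centred on the orbit of $x$, then show the $(s+2\epsilon)$-Hausdorff premeasure vanishes. The only organizational difference is that you index the cover by dyadic scale $k$ (with $\sim 2^{(k+1)(s+\epsilon)}$ balls per scale), whereas the paper reindexes by orbit time and covers by the single family $\{B(T^k x,k^{-1/a})\}_{k\ge N}$; your bookkeeping quietly sidesteps the paper's unaddressed case where the hitting times $k_n$ stay bounded (equivalently $y\in\mathcal{O}^+(x)$, a countable and hence dimension-harmless set), and your remark that the hypothesis $s\le\dim\mu_\phi$ is unused here also matches the paper.
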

%%%%%%%%%%%%%%%%%%%%%%%%%%%%%%%%%%%%%%

%%%%%%%%%%%%%%%%%%%%%%%%%%%%%%%%%%%%%%
\begin{proof}
Notice that in the definition of $ R(x,y)$, one can
replace the limit process of $r\to 0$ by the sequence $2^{-n}$ with
$n\to \infty $. Then for $x\in\zu$ and  any  $a>s$,
%$$
%   \mathcal{A}_x(s) \subset \limsup_{n\to \infty}
%     \left\{y\,:\, \tau(x, y) \le |I_n(y)|^{-a}\right\}
%     \subset \limsup_{n\to \infty} \bigcup_{k=1}^{K(n)} I_n(T^k
%     x),
%$$
$$
 \mathcal{R}_{\leq s }(x)  \subset \limsup_{n\to \infty}
     \left\{y\,:\, \tau_{2^{-n}}(x, y) \le 2^{an}\right\}   .
$$
In other words, given $y\in  \mathcal{R}_{\leq s }(x)  $, there exists an  integer $1\leq k_n\leq 2^{an}$ such that  $y \in B(T^{k_n} x,2^{-n})$, for infinitely many $n$.  Assume that the sequence of integers $(k_n)$ tends to infinity. Using that  $2^{-n} \leq (k_n)^{-1/a}$ for such a couple of integers $(k_n,n)$, we have $y \in B(T^{k_n} x,(k_n)^{-1/a})$  for
infinitely many integers $k_n$. 
Hence
$$
 \mathcal{R}_{\leq s }(x)
     \subset \limsup_{n\to \infty}  B(T^k
     x, k^{-1/a}) .
\vspace{-1mm}
$$

For each integer $n$, we deduce that    the set of balls $\{B(T^k x, k^{-1/a})\}_{k\geq n}$ forms a
covering of $ \mathcal{R}_{\leq s }(x)  $ by   intervals of length
smaller than $n^{-1/a}$.  Let $\mathcal{H}^a_\epsi$ stand for the
$a$-Hausdorff measure obtained by using coverings by balls of
size less than $\epsi$. Using $\{B(T^k x,
k^{-1/a})\}_{k\geq n}$  as covering,  we see that for any $a'>a$,
$$\mathcal{H}^{a'}_{n^{-1/a}}  ( \mathcal{R}_{\leq s }(x)  )
\leq  \sum_{k\geq n}   |B(T^k x, k^{-1/a})|^{a'}  \leq 2^{a'/a}
\sum_{k\geq n}     k^{-a'/a}    \leq \xi' \, n ^{1-a'/a},$$ which
tends to 0 when $n$ tends to infinity. Here $\xi'$ is a universal
constant. We deduce that the $a'$-Hausdorff  measure of
$\mathcal{R}_{\leq s }(x)$ is necessarily 0. Thus $\dim 
\mathcal{R}_{\leq s }(x) \leq a'$.  Since this holds for any $a'>a$,
and then for any $a>s$,   \eqref{UpperBoundIhn2} follows.
\end{proof}

%%%%%%%%%%%%%%%%%%%%%%%%%%%%%%%%%%%%%%
%

%\begin{remark}
%Observe that  the upper bound $\dim  \big(
% \mathcal{R}_{\leq s }(x) \big)  \le s$ holds in fact for any $0\le s\le 1$. Nevertheless it is relevant for us  only when $
%s\leq \dim  \mu_\phi$, since the multifractal spectrum of $
%\mu_{\phi}$ becomes strictly concave when $s>\dim  \mu_\phi$.
%\end{remark}

\vspace{-1mm}

%%%%%%%%%%%%%%%%%%%%%%%%%%%%%%%%%%%%%%
%%%%%%%%%%%%%%%%%%%%%%%%%%%%%%%%%%%%%%
%%%%%%%%%%%%%%%%%%%%%%%%%%%%%%%%%%%%%%
%%%%%%%%%%%%%%%%%%%%%%%%%%%%%%%%%%%%%%
%%%%%%%%%%%%%%%%%%%%%%%%%%%%%%%%%%%%%%
%%%%%%%%%%%%%%%%%%%%%%%%%%%%%%%%%%%%%%
%%%%%%%%%%%%%%%%%%%%%%%%%%%%%%%%%%%%%%
%%%%%%%%%%%%%%%%%%%%%%%%%%%%%%%%%%%%%%
\subsection{Lower bound for $ \dim  \mathcal{L}^\delta(x) $.} %when $s<\dim _H \,\mu_\phi$.} 

Let   $(x_n)_{n\geq 1}$ be a sequence in $\zu$, and let  $(l_n)_{n\geq 1}$  be a positive decreasing sequence.
Consider the limsup sets of the form
$$\mathcal{L}_\zeta := \bigcap_{N\geq 1} \ \bigcup_{n\geq N} \ B(x_n,(l_n)^\zeta).$$
Provided that   $\mu_\phi(\mathcal{L}_{\zeta_0}) =1$  for some $\zeta_0>0$, the dimensions of $\mathcal{L}_\zeta $
can be bounded from below using the  heterogeneous
ubiquity  theorems developed in  \cite{BS}.  To apply such theorems,
some assumptions need to be checked for $\mu_\phi$. We refer   to
Definition 2 of \cite{BS} for the precise description of these
assumptions. We  explain now why these
assumptions are fulfilled in our framework.
From Theorem  1.11(2) of Baladi \cite{Baladi2000}, Theorem 7.1 of Philipp and Stout \cite{PHILIPPSTOUT},
we deduce the following properties for $\mu_\phi$.

%%%%%%%%%%%%%%%%%%%%%%%%%%%%%%%%%%%%%%%%%%%
\begin{theorem}
\label{thhypotheses} Assume that the   potential $\phi$ associated
with $\mu_\phi$ is H\"olderian.
 There exists a non-decreasing continuous function $\chi$
defined on $\mathbb{R}_+$ with the   properties:
\begin{itemize}
\item
 $\chi (0) =0$, $r\mapsto r^{-\chi(r)}$ is non-increasing near $0^+$,
\item
 $\lim_{r\to 0^+}
r^{-\chi(r)}=+\infty$, and $\forall \, \epsi>0$, $r\mapsto
r^{\epsi-\chi (r)}$ is non-decreasing near 0,
\end{itemize}
such that  for $\mu_\phi$-almost every $y \in [0,1]^d$, there
exists $ r(y)>0,$ such that 
\begin{eqnarray}
\label{scaling1}
\mbox{for all $0<r\leq r(y)$, } \ \ \  r^{\dim \mu_\phi+\chi(r)}\le  & \mu_\phi \big
(B(y,r) \big ) & \le r^{\dim \mu_\phi-\chi(r)}.
\end{eqnarray}
\end{theorem}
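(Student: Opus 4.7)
The plan is to reduce the statement to an almost sure quantitative estimate on Birkhoff sums, then to invoke the law of the iterated logarithm from Philipp--Stout \cite{PHILIPPSTOUT} (which applies thanks to the exponential decay of correlations for $\mu_\phi$). The output is an explicit ``slow'' function $\chi$ controlling the deviation of $\log\mu_\phi(B(y,r))/\log r$ from $\dim\mu_\phi$.

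First, I would pass from balls to basic intervals. Given $y\in\zu$ and $0<r<1$, let $n=n(r,y)$ be the unique integer such that $y$ belongs to a basic interval $I_n(y)\in\mathcal G_n$ of generation $n$ with $|I_n(y)|\leq r < |I_{n-1}(y)|$. By the covering construction of Section~\ref{coveringsection} and the bounded distortion property \eqref{distor}, the ball $B(y,r)$ is both contained in a bounded union of basic intervals from $\mathcal C_{n'}$ and contains at least one such interval, where $n'$ satisfies $|n'-n|\leq C_0$ for some universal constant $C_0$. Combined with the Gibbs property \eqref{GibbsProperty}, this yields
\begin{equation*}
\bigl|\log\mu_\phi(B(y,r)) - S_n\phi(y)\bigr|\leq C_1, \qquad \bigl|\log r + S_n\log|T'|(y)\bigr|\leq C_2,
\end{equation*}
where $C_1,C_2$ are constants depending only on the Markov map and the potential. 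Hence, up to additive constants, $\log\mu_\phi(B(y,r))/\log r$ equals the ratio $S_n\phi(y)/(-S_n\log|T'|(y))$.

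Next, I would apply the almost sure invariance principle / law of the iterated logarithm of Philipp--Stout (Theorem~7.1 of~\cite{PHILIPPSTOUT}, whose hypotheses are satisfied by the Hölder observables $\phi$ and $\log|T'|$ with respect to $\mu_\phi$, thanks to the exponential decay of correlations in Theorem~\ref{exponentialdecay}). Centering these observables, this gives a constant $\Lambda>0$ such that for $\mu_\phi$-a.e.\ $y$ and for every large enough $n$,
\begin{equation*}
\Bigl|S_n\phi(y) - n\textstyle\int\phi\,d\mu_\phi\Bigr|\leq \Lambda\sqrt{n\log\log n},\quad \Bigl|S_n\log|T'|(y) - n\textstyle\int\log|T'|\,d\mu_\phi\Bigr|\leq \Lambda\sqrt{n\log\log n}.
\end{equation*}
Since $-\int\log|T'|\,d\mu_\phi<0$, writing $\alpha_\phi:=\int\phi\,d\mu_\phi$ and $\lambda_\phi:=\int\log|T'|\,d\mu_\phi$, a short calculation (expanding $\frac{\alpha_\phi+O(\sqrt{n\log\log n}/n)}{-\lambda_\phi+O(\sqrt{n\log\log n}/n)}$ around $\dim\mu_\phi=\alpha_\phi/(-\lambda_\phi)$) gives
\begin{equation*}
\Bigl|\tfrac{\log\mu_\phi(B(y,r))}{\log r}-\dim\mu_\phi\Bigr|\leq C_3\sqrt{\tfrac{\log\log n}{n}}
\end{equation*}
for $n$ large. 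Using the second displayed inequality above, $n$ is comparable to $\log(1/r)/\lambda_\phi$ up to a $\sqrt{n\log\log n}$ term, so we obtain
\begin{equation*}
\Bigl|\tfrac{\log\mu_\phi(B(y,r))}{\log r}-\dim\mu_\phi\Bigr|\leq C_4\sqrt{\tfrac{\log\log\log(1/r)}{\log(1/r)}}
\end{equation*}
for $\mu_\phi$-a.e.\ $y$ and all $r\leq r(y)$.

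Finally, I would set
\begin{equation*}
\chi(r):=C_4\sqrt{\tfrac{\log\log\log(1/r)}{\log(1/r)}}\quad\text{for small }r,
\end{equation*}
extended continuously and monotonically on $\mathbb{R}_+$ with $\chi(0)=0$. A direct differentiation verifies that $\chi$ is non-decreasing near $0$, that $r\mapsto r^{-\chi(r)}$ is non-increasing near $0^+$ with $r^{-\chi(r)}\to+\infty$, and that for any $\varepsilon>0$, $r\mapsto r^{\varepsilon-\chi(r)}$ is non-decreasing near $0$ (because $\chi(r)=o(\varepsilon)$ and $\chi$ is slowly varying). The desired double inequality \eqref{scaling1} then follows by exponentiating the previous display. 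The main obstacle is the first step, establishing the relation between $\mu_\phi(B(y,r))$ and $\mu_\phi(I_n(y))$ uniformly in $y$; this is where the distortion bound \eqref{distor} and the Markov structure are essential. Everything else is bookkeeping around the LIL.
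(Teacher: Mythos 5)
The overall strategy is the same as the paper's: the paper proves Theorem~\ref{thhypotheses} by citing the almost sure invariance principle / law of the iterated logarithm for Gibbs measures (Baladi, Theorem~1.11(2); Philipp--Stout, Theorem~7.1) and stating that one may take $\chi(r)=\bigl(\log\log|\log r|/|\log r|\bigr)^{1/2}$, which is exactly your $\chi$ up to a harmless multiplicative constant. Your Steps~2--4 (LIL on the Birkhoff sums $S_n\phi$ and $S_n\log|T'|$, Taylor expansion of the ratio, conversion from $n$ to $\log(1/r)$) correctly reconstruct the computation that the paper leaves implicit.

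However, Step~1 as written contains a genuine gap. You assert that $|\log\mu_\phi(B(y,r))-S_n\phi(y)|\le C_1$ with a constant $C_1$ depending only on the map and the potential. This is false: two adjacent basic intervals of the same approximate length can have $\mu_\phi$-measures differing by a factor that is exponential in $n$ (already for the doubling map and a Bernoulli $(p,1-p)$ measure with $p\ne 1/2$, the cylinders $[1/2-2^{-n},1/2)$ and $[1/2,1/2+2^{-n})$ have measure ratio $(p/(1-p))^{n-2}$). Consequently, if $y$ sits close to the boundary of $I_n(y)$, the ball $B(y,r)$ spills into a neighbouring cylinder whose measure is not controlled by $S_n\phi(y)$, and the uniform additive bound fails. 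What is true is weaker and is only $\mu_\phi$-a.e.: one must first show by Borel--Cantelli that for a.e.~$y$ and all large $n$, ${\rm dist}(y,\partial I_n(y))\ge |I_n(y)|^{1+\varepsilon_n}$ with $\varepsilon_n\to0$ (one can take $\varepsilon_n\sim\log n/n$), and then compare $\mu_\phi(B(y,r))$ with $\mu_\phi(I_m(y))$ for the largest $m$ such that $B(y,r)\subset I_m(y)$; this yields an additive error of order $\varepsilon_n n=O(\log n)$, which is fortunately $o(\sqrt{n\log\log n})$ and so is absorbed into $\chi$. Alternatively one can invoke the quasi-Bernoulli comparison of Barral--Ben Nasr--Peyri\`ere (Theorem~5.1 of~\cite{BarralBennasrPeyriere}), as the paper itself does in the proof of Corollary~\ref{cor-fubini}. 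Once this a.e.\ comparison is in place, the rest of your argument goes through; you have already correctly identified this ball-versus-cylinder step as the ``main obstacle'', but the claimed uniform constant is not the right way to state it.
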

%%%%%%%%%%%%%%%%%%%%%%%%%%%%%%%%%%%%%%%%%%%

 Property \eqref{scaling1} shall be viewed as  an  illustration of the iterated logarithm law for invariant measures. By the theorems of  \cite{Baladi2000}  and  \cite{PHILIPPSTOUT}, one can take $\chi$
 equal to
\begin{equation}
\label{defchi} \chi(0)=0  \  \mbox{ and }  \ \ \chi: r\mapsto   
\Big({\frac{\log\log|\log (r)|}{|\log r|}}\Big)^{1/2} \ \mbox { if }r>0.
\end{equation}

 In the previous section, we proved the following:   for any $\delta$ such that $ 1/\delta> \dim \mu_\phi =\alpha(1)$,   for $\mu_\phi$-almost every $x\in \zu$,  $
\mu_{\phi}(\mathcal{L}^{\delta}(x))  = 1.$
Theorem \ref{thhypotheses} and the quasi-Bernoulli property of   $\mu_\phi$ and $\mu_q$ imply that the conditions of Definition 2 of \cite{BS} are fulfilled for $\mu_\phi$-a.e.  $x\in \zu$. We can then apply   the heterogeneous ubiquity Theorem  4  of  \cite{BS} to get the following lower bound.

  %%%%%%%%%%%%%%%%%%%%%%%%%%%%%%%%%%%%%%
\begin{theorem}
\label{ubiq} For any $\delta$ such that $ 1/\delta> \dim  \,
\mu_\phi  $, for $\mu_{\phi}$-a.e. $ x\in \zu$, we
have
 \begin{equation}
 \label{minor1}
 \mbox{for every $\zeta>1$, } \ \ \  \dim (\mathcal{L}^{\zeta \cdot \delta }(x))
\geq  ({\dim {\mu_{\phi}}})/{\zeta}.
\end{equation}
\end{theorem}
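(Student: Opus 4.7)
The plan is to invoke the heterogeneous ubiquity theorem (Theorem 4) of \cite{BS} for the limsup family of balls $\mathcal{B}(x) := (B(T^n x, n^{-\delta}))_{n \geq 1}$ equipped with the base measure $\mu_\phi$. The conclusion of that theorem is precisely that, once $\mu_\phi$-ubiquity holds for $\mathcal{B}(x)$, contracting the radii by a power $\zeta > 1$ yields a limsup set whose Hausdorff dimension is at least $\dim \mu_\phi / \zeta$.

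First I would select the full-measure set of centers $x$. By Proposition \ref{thm-full-measure} applied with $\psi = \phi$, together with $\alpha(1) = \dim \mu_\phi$, the assumption $1/\delta > \dim \mu_\phi$ gives $\mu_\phi(\mathcal{L}^{\delta}(x)) = 1$ for $\mu_\phi$-a.e. $x$. I would intersect this set with the (full-measure) set where the iterated-logarithm scaling \eqref{scaling1} of Theorem \ref{thhypotheses}, with the correction function $\chi$ of \eqref{defchi}, holds; call the intersection $\Omega$. For $x \in \Omega$, the system $\mathcal{B}(x)$ satisfies the two structural hypotheses of \cite[Definition 2]{BS}: the ubiquity condition $\mu_\phi(\limsup_n B(T^n x, n^{-\delta})) = 1$, and the two-sided local scaling of $\mu_\phi$ at $\mu_\phi$-a.e. point. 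The quasi-Bernoulli inequality \eqref{QBernoulliProperty}, shared by $\mu_\phi$ and by the auxiliary Gibbs measures $\mu_q$, supplies the self-similar multiplicative structure required in \cite{BS} to run the associated mass-transference argument.

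Once these hypotheses are verified, applying \cite[Theorem 4]{BS} with contraction parameter $\zeta > 1$, that is, replacing each ball $B(T^n x, n^{-\delta})$ by $B(T^n x, (n^{-\delta})^{\zeta}) = B(T^n x, n^{-\zeta\delta})$, yields the announced lower bound
\[
\dim \mathcal{L}^{\zeta \delta}(x) \;\geq\; \frac{\dim \mu_{\phi}}{\zeta}
\]
for every $\zeta > 1$ and every $x \in \Omega$. The main obstacle is not in the argument itself but in matching the orbital setup to the abstract framework of \cite{BS}: one must check that the slow error term $\chi(r) \asymp (\log\log|\log r|/|\log r|)^{1/2}$ does not spoil the ubiquity assumption, and that the reindexing of the family $\mathcal{B}(x)$ by its radii $(n^{-\delta})_{n}$ produces an admissible ubiquitous system in the sense of \cite[Definition 2]{BS}; both points are standard verifications given the Gibbs/quasi-Bernoulli structure at our disposal.
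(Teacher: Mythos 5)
Your proposal is correct and follows essentially the same route the paper takes: first establish that $\mu_\phi(\mathcal{L}^\delta(x))=1$ for $\mu_\phi$-a.e.\ $x$ via Proposition \ref{thm-full-measure}, then verify the hypotheses of Definition~2 of \cite{BS} using the iterated-logarithm scaling of Theorem \ref{thhypotheses} and the quasi-Bernoulli property, and finally invoke the heterogeneous ubiquity Theorem~4 of \cite{BS} with contraction parameter $\zeta$. The extra remarks you add about reindexing the family of balls by their radii and checking that the correction $\chi$ does not disturb ubiquity are exactly the ``standard verifications'' the paper leaves implicit.
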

%%%%%%%%%%%%%%%%%%%%%%%%%%%%%%%%%%%%%%
 
By considering an increasing  countable sequence $(\delta_n)$ tending to $\delta_0= 1/\dim  \mu_\phi$ and applying Theorem \ref{ubiq} to each $\delta_n$, we get immediately:
%%%%%%%%%%%%%%%%%%%%%%%%%%%%%%%%%%%%%%
\begin{corollary}
For $\mu_{\phi}$-almost every $ x$,  for every $\zeta >1 $,
$$
 \dim (\mathcal{L}^{\zeta \cdot \delta_0}(x)) \geq
 ({\dim {\mu_{\phi}}})/{\zeta}=  {1}/({ \zeta \cdot \delta_0 }).
$$
In other words, for every $\delta$ such that $1/\delta < \dim  \mu_\phi$, we have the lower bound
\[
 \dim (\mathcal{L}^{\delta}(x)) \geq
 {1}/{\delta}.
\]
\end{corollary}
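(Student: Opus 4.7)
The plan is to deduce this corollary from Theorem \ref{ubiq} by a straightforward limiting argument that pushes the lower bound to the boundary parameter $\delta_0 = 1/\dim\mu_\phi$, where Theorem \ref{ubiq} cannot be applied directly.

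First, I would fix a strictly increasing sequence of positive reals $(\delta_n)_{n\geq 1}$ with $\delta_n \nearrow \delta_0$. For each $n$, the condition $1/\delta_n > \dim\mu_\phi$ holds, so Theorem \ref{ubiq} produces a Borel set $E_n \subset \zu$ with $\mu_\phi(E_n)=1$ such that, for every $x \in E_n$ and every $\zeta > 1$,
\[
\dim \mathcal{L}^{\zeta \delta_n}(x) \;\geq\; \frac{\dim\mu_\phi}{\zeta}.
\]
I would then set $E := \bigcap_{n\geq 1} E_n$, which satisfies $\mu_\phi(E) = 1$ as a countable intersection of full-measure sets. It suffices to establish the conclusion of the corollary on $E$.

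Fix $x \in E$ and $\zeta > 1$. For each $n\geq 1$, define $\zeta_n := \zeta\,\delta_0/\delta_n$; since $\delta_n < \delta_0$, we have $\zeta_n > \zeta > 1$, and by construction $\zeta_n \delta_n = \zeta\,\delta_0$. Applying Theorem \ref{ubiq} on $E_n$ at parameters $(\delta_n, \zeta_n)$, we get
\[
\dim \mathcal{L}^{\zeta\,\delta_0}(x) \;=\; \dim \mathcal{L}^{\zeta_n \delta_n}(x) \;\geq\; \frac{\dim \mu_\phi}{\zeta_n} \;=\; \frac{\dim\mu_\phi}{\zeta}\cdot\frac{\delta_n}{\delta_0}.
\]
The left-hand side does not depend on $n$, so letting $n \to \infty$ and using $\delta_n/\delta_0 \to 1$, I obtain
\[
\dim \mathcal{L}^{\zeta\,\delta_0}(x) \;\geq\; \frac{\dim\mu_\phi}{\zeta} \;=\; \frac{1}{\zeta\,\delta_0},
\]
which is the first claim. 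The second claim follows by setting $\zeta := \delta/\delta_0 > 1$ whenever $\delta > \delta_0 = 1/\dim\mu_\phi$, i.e.\ whenever $1/\delta < \dim\mu_\phi$.

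There is no real obstacle here: the proof is just a diagonal/limit manipulation exploiting the freedom in the parameter $\zeta$ of Theorem \ref{ubiq}. The only thing to notice is that one cannot invoke Theorem \ref{ubiq} directly at $\delta_0$ (since the hypothesis $1/\delta > \dim\mu_\phi$ becomes an equality), so one must approach from inside the region by a countable sequence and compensate by increasing $\zeta$ to $\zeta_n$; the loss in the lower bound, $\delta_n/\delta_0$, vanishes in the limit.
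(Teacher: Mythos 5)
Your proof is correct and follows the same strategy the paper sketches: approximate the critical value $\delta_0=1/\dim\mu_\phi$ from below by a countable increasing sequence $\delta_n$, intersect the corresponding full-measure sets, and for each fixed $\zeta>1$ compensate the gap $\delta_n<\delta_0$ by inflating the free parameter to $\zeta_n=\zeta\delta_0/\delta_n>1$ so that $\zeta_n\delta_n=\zeta\delta_0$ stays constant. The paper only says "apply Theorem~\ref{ubiq} to each $\delta_n$" and calls the conclusion immediate; your writeup supplies the needed bookkeeping, in particular the observation that one cannot just keep $\zeta$ fixed (since $\mathcal{L}^{\zeta\delta_0}(x)\subset\mathcal{L}^{\zeta\delta_n}(x)$ the inequality at $\delta_n$ with the same $\zeta$ goes the wrong way), which is precisely why the reparametrization $\zeta\mapsto\zeta_n$ is the right move.
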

%%%%%%%%%%%%%%%%%%%%%%%%%%%%%%%%%%%%%%

\vspace{-1mm}

%%%%%%%%%%%%%%%%%%%%%%%%%%%%%%%%%%%%%%
%%%%%%%%%%%%%%%%%%%%%%%%%%%%%%%%%%%%%%
%%%%%%%%%%%%%%%%%%%%%%%%%%%%%%%%%%%%%%
%%%%%%%%%%%%%%%%%%%%%%%%%%%%%%%%%%%%%%
%%%%%%%%%%%%%%%%%%%%%%%%%%%%%%%%%%%%%%
%%%%%%%%%%%%%%%%%%%%%%%%%%%%%%%%%%%%%%
%%%%%%%%%%%%%%%%%%%%%%%%%%%%%%%%%%%%%%
%%%%%%%%%%%%%%%%%%%%%%%%%%%%%%%%%%%%%%
%%%%%%%%%%%%%%%%%%%%%%%%%%%%%%%%%%%%%%
\section{Part IV of the spectrum: $1/\delta > \alpha_+$}
\label{secpart4}

%We start by finding an upper bound for the dimension of
%$\mathcal{F}^{\delta}(x)$ in terms of the multifractal spectrum of
%$\mu_\phi$.
%%%%%%%%%%%%%%%%%%%%%%%%%%%%%%%%%%%%%%
\begin{proposition}\label{III-u}
 Let $s \ge 0$. For $\mu_\phi$-almost every  $x$,
\begin{equation}\label{EarlyHitSet1}  \mathcal{R}_{\ge s}(x)  \subset    \mathcal{E}_{\ge s}  . \end{equation}
 Moreover, for any Gibbs measure $\mu_{\psi}$ on $[0,1]$, 
$$\mbox{  for
$\mu_\phi$-almost every  $x\in \zu$,   }  \ \  \mathcal{R}_{\ge s}(x)   \stackrel{\mu_{\psi}}{ =}    \mathcal{E}_{\ge s}    ,$$
where the  equality means that the two sets differ from a set of $\mu_{\psi}$-measure zero.
\end{proposition}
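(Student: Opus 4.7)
The plan is to handle the two assertions separately. For the first inclusion $\mathcal{R}_{\geq s}(x)\subset\mathcal{E}_{\geq s}$, I argue by contrapositive: for $\mu_\phi$-a.e.\ $x$, every $y$ with $\underline{d}_{\mu_\phi}(y)<s$ satisfies $R(x,y)<s$. The engine is the big-hitting Lemma \ref{Big-hitting}, which converts ``large $\mu_\phi$-mass of a basic interval'' into ``short hitting time of the orbit''.

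First, for each pair of rationals $h,\epsilon>0$, I apply Lemma \ref{Big-hitting} to the family of all basic intervals $C\in\mathcal{C}_n$ with $\mu_\phi(C)\geq|C|^{h-\epsilon}$. The bound $\mu_\phi(\mathcal{C}_{n,N,h})\leq 2^{-n}$ is summable, so Borel--Cantelli produces a $\mu_\phi$-full measure set $\Omega_{h,\epsilon}$ on which, for $n$ large, every such $C$ satisfies $\tau(x,C)\leq|C|^{-h}$. Intersecting over all rational pairs yields a full-measure set $\Omega$. Given $x\in\Omega$ and $y$ with $\underline{d}_{\mu_\phi}(y)<s$, I pick rationals $\underline{d}_{\mu_\phi}(y)<h-\epsilon<h<s$. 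Infinitely often $\mu_\phi(B(y,2^{-n}))\geq 2^{-n(h-\epsilon)}$; since $B(y,2^{-n})$ is covered by boundedly many intervals of $\mathcal{C}_n$ (by \eqref{covering}), at least one such interval $C_n$ satisfies $\mu_\phi(C_n)\gtrsim 2^{-n(h-\epsilon)}$, and comparing with $|C_n|\asymp 2^{-n}$ yields $\mu_\phi(C_n)\geq|C_n|^{h-\epsilon/2}$ for $n$ large. Applying the conclusion defining $\Omega$ to the pair $(h,\epsilon/2)$, I obtain an integer $k\leq L^{h}2^{nh}$ with $T^kx\in C_n\subset B(y,(L+1)2^{-n})$; this forces $\tau_{(L+1)2^{-n}}(x,y)\leq L^{h}2^{nh}$ and hence $R(x,y)\leq h<s$.

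For the equality statement, I use Corollary \ref{cor-R}: for $\mu_\phi\times\mu_\psi$-a.e.\ $(x,y)$,
\[
R(x,y)\;=\;d_{\mu_\phi}(y)\;=\;\alpha_\psi\;:=\;\frac{\int(-\phi)\,d\mu_\psi}{\int\log|T'|\,d\mu_\psi}.
\]
By Fubini, for $\mu_\phi$-a.e.\ $x$ there exists a set $F_x\subset\zu$ with $\mu_\psi(F_x)=1$ on which both $R(x,y)=\alpha_\psi$ and $\underline{d}_{\mu_\phi}(y)=\alpha_\psi$ hold. On $F_x$, the conditions $R(x,y)\geq s$ and $\underline{d}_{\mu_\phi}(y)\geq s$ are each equivalent to the single scalar inequality $\alpha_\psi\geq s$, so $\mathcal{R}_{\geq s}(x)$ and $\mathcal{E}_{\geq s}$ coincide outside the $\mu_\psi$-null set $\zu\setminus F_x$.

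The main technical obstacle is the quantitative step that upgrades ``$y$ has small lower local dimension'' into ``some basic interval of generation $n$ near $y$ satisfies the measure hypothesis of Lemma \ref{Big-hitting}'': it rests on the distortion control \eqref{covering1} and on the elementary fact that $B(y,2^{-n})$ meets only $O(1)$ intervals of $\mathcal{C}_n$, which lets one transfer a lower bound on $\mu_\phi(B(y,2^{-n}))$ to one on $\mu_\phi(C_n)$ in the right power of $|C_n|$. Once this transfer is done, the rest is routine Borel--Cantelli bookkeeping together with an application of Fubini to Corollary \ref{cor-R}.
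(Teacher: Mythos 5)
Your argument is correct and is structurally the same as the paper's: the first inclusion is established via Lemma \ref{Big-hitting} plus Borel--Cantelli on a countable family of parameters, and the measure-theoretic equality via Corollary \ref{cor-R} (the paper invokes the equivalent Corollary \ref{cor-fubini}). The only cosmetic difference is that you phrase the inclusion contrapositively and work with balls $B(y,2^{-n})$ before transferring to basic intervals of $\mathcal{C}_n$ via \eqref{covering}, whereas the paper sets up the argument directly in terms of the basic interval $I_n(y)$ containing $y$ (through the auxiliary sets $\mathcal{R}_{n,s,\epsi}(x)$ and $\mathcal{E}_{n,s,\epsi}$); both versions bury the same bounded-distortion bookkeeping in the $\epsi$-slack, so nothing essential distinguishes them.
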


\begin{remark}\label{remark-depend}
  The full $\mu_\phi$-measure set  satisfying the first assertion of
  Proposition \ref{III-u} depends on $s$. %However,
\end{remark}
%%%%%%%%%%%%%%%%%%%%%%%%%%%%%%%%%%%%%%

%%%%%%%%%%%%%%%%%%%%%%%%%%%%%%%%%%%%%%
\begin{proof} The case $s=0$ is obvious,  we assume that $s>0$.
For any integer $n\geq 1$, let $I_n(y)$ be the basic interval in
$\C_n $ containing $y$. Observe that a priori the generation of $I_n(y)$ is  {\em not} n. For any real number $\epsi>0$, we introduce
the sets
 \begin{eqnarray*}
  \mathcal{R}_{n, s,\e}(x) =  \{y: \tau(x, I_n(y)) \ge |I_n(y)|^{s-\e}\}  
  , \ \   \mathcal{E}_{n, s,\e}  = \{y: \mu_\phi(I_n(y)) \le |I_n(y)|^{s-2\e} \}.
     \end{eqnarray*}
 By definition of $ R(x,y) $ and $
\underline{d}_{\mu_\phi}  (y) $, %and recalling the distorsion property of the basic intervals,
we have
$$
   \mathcal{R}_{\ge s}(x) = \bigcap_{\e>0} \liminf_{n \to \infty}  \mathcal{R}_{n, s,\e}(x)  \  \ \mbox{ and } \
 \
   \mathcal{E}_{\ge s}   = \bigcap_{\e>0} \liminf_{n \to \infty}  \mathcal{E}_{n, s,\e}  .
$$
In order to prove \eqref{EarlyHitSet1}, it is sufficient to prove that for $\mu_\phi$-almost every $x$, there
exists some integer $n(x)$ such that
\begin{equation}
\label{inclusion}
    \forall \ n\geq n(x), \ \ \ \ \     \mathcal{R}_{n,s,\e}(x) \subset   \mathcal{E}_{n,
    s,\e}.
\end{equation}
Notice that $ \mathcal{E}_{n,s,\e} ^c$ is the union of basic
intervals $C$ in $\mathcal{C}_n$ such that $\mu_\phi(C)
>|C|^{s-2\e}$. Let $\mathcal{D}_{n,s, \e}:=\{C_1, \cdots, C_N\}$
be the set of  these basic intervals. Using Lemma
\ref{Big-hitting} to the   basic intervals
$\mathcal{D}_{n, s,\e}$ and to $h=  s - \epsi $, we  see that for $n$   larger than some  $n_{s,\epsi}$,
$$
   P_n:=    \mu_\phi \Big( \Big\{ x: \exists \,  C \in \mathcal{D}_{n,s, \e} \ \mbox{\rm such that }\
     \tau(x, C) \ge  |C|^{s-\e}\Big\} \Big) \leq 2^{-   n}.
$$

The
sum over $n\geq n_{s,\epsi}$ of the $P_n$'s is  finite.
Applying the
Borel-Cantelli lemma,   the following holds for $\mu_\phi$-a.e.\ $x$:   there
exists an integer $n(x)$ such that 
$$\forall \, n\geq n(x),   \ \ \forall \, C \in \mathcal{D}_{n, s,\e}, \ \ \tau(x, C)<
|C|^{s-\e}.$$
Hence, if   $n\geq n(x)$, then    every $C \in \mathcal{D}_{n, s,\e}$ is included in $
\mathcal{R}_{n,s,\e}(x)^c$.  This yields that  $
\mathcal{E}_{n,s,\e}^c \subset  \mathcal{R}_{n,s,\e}(x)^c$, which is
clearly equivalent to \eqref{inclusion}. Then the first assertion
\eqref{EarlyHitSet1} of Proposition \ref{III-u} follows.

\mk

To prove the second assertion, using the ergodicity of $\mu_{\psi}$,  it suffices to show that
$$
\mbox{ for
$\mu_\phi$-almost every  $x$,  } \ \ \mu_{\psi}\Big(\Big\{y\in [0,1]:    \underline{d}_{\mu_\phi}(y)\ge s \ \mbox{ and } \    R(x, y)<s
\Big\}\Big)=0.
$$
This last  statement  is directly deduced from  Corollary \ref{cor-fubini}.
\end{proof}\medbreak
%%%%%%%%%%%%%%%%%%%%%%%%%%%%%%%%%%%%%%

We are now ready to prove some of the statements of Theorem \ref{main-thm1}.

%%%%%%%%%%%%%%%%%%%%%%%%%%%%%%%%%%%%%%
%%%%%%%%%%%%%%%%%%%%%%%%%%%%%%%%%%%%%%
\begin{proof}   $[$Part IV of the spectrum: Theorem  \ref{main-thm1}, 4.$]$
 By
Remark \ref{e+} and Proposition \ref{III-u}, for each $s> \alpha_+ $
for $\mu_\phi$-almost every $x\in\zu$, we have
\[
   \mathcal{R}_{\geq s}(x)= \Big\{y \in [0,1]: \, R(x,y) \geq s \Big\} = \emptyset,
\]
i.e. there is no point with hitting times larger than $s>\alpha_+$.
Then, applying  formula (\ref{F}) and Lemma \ref{lem-orbit-set}, we
deduce that when  $1/\delta > \alpha_{+}$, for
$\mu_\phi$-almost every $x\in\zu$, $\mathcal{F}^\delta(x)=\emptyset$
 and thus $\Lkap=\zu$. But as mentioned in Remark
\ref{remark-depend}, the full $\mu_\phi$-measure set depends on
$\delta$. To solve this problem, i.e. to get $\mathcal{F}^\delta(x)
= \emptyset$ for every $\delta$ satisfying $1/\delta >\alpha_+$, we
take a sequence $(\delta_n)_{n\geq 1}$ such that $(1/\delta_n) $ is
dense in $]\alpha_+, \infty[ $. By taking intersection of countable
full $\mup$-measure sets, we obtain that for $\mu_\phi$-almost every
$x\in\zu$, for all $n$, $\mathcal{F}^{\delta_n}(x)=\emptyset$ and
$\mathcal{L}^{\delta_n}(x)=\zu$.  Finally, the case of an  arbitrary
$\delta$  such that $1/\delta >\alpha_+$ is obtained by using the
monotonicity of the  sets $ \mathcal{F}^{\delta }(x)$ and
$\mathcal{L}^{\delta}(x) $ with respect to $\delta$.
\end{proof} 
%%%%%%%%%%%%%%%%%%%%%%%%%%%%%%%%%%%%%%
%%%%%%%%%%%%%%%%%%%%%%%%%%%%%%%%%%%%%%

%%%%%%%%%%%%%%%%%%%%%%%%%%%%%%%%%%%%%%
%%%%%%%%%%%%%%%%%%%%%%%%%%%%%%%%%%%%%%
%%%%%%%%%%%%%%%%%%%%%%%%%%%%%%%%%%%%%%
%%%%%%%%%%%%%%%%%%%%%%%%%%%%%%%%%%%%%%
%%%%%%%%%%%%%%%%%%%%%%%%%%%%%%%%%%%%%%
%%%%%%%%%%%%%%%%%%%%%%%%%%%%%%%%%%%%%%
%%%%%%%%%%%%%%%%%%%%%%%%%%%%%%%%%%%%%%
%%%%%%%%%%%%%%%%%%%%%%%%%%%%%%%%%%%%%%
%%%%%%%%%%%%%%%%%%%%%%%%%%%%%%%%%%%%%%
%%%%%%%%%%%%%%%%%%%%%%%%%%%%%%%%%%%%%%
\section{  Part III of the spectrum: $\alpha_{\max} < 1/\delta \leq \alpha_+$}
\label{secpart3}

In this  short section, we gather the previous results  to obtain
Part III of the spectrum and item 3. of Theorem   \ref{main-thm1}.
   We adopt the notations of   Section \ref{sechitt}. Let $\delta $  be such that  $\alpha_{\max} < 1/\delta  \leq \alpha_+$,  and consider the unique real number $ q(1/\delta)$. Then the associated   Gibbs measure $\mu_{q(1/\delta)} $ is supported on the level set  
$ \mathcal{E}_{\mu_\phi}(1/\delta) ,$
which has Hausdorff dimension $D_{\mu_\phi}  (1/\delta)$.

Further, we    apply  the second part of  Proposition \ref{III-u} to the
  measure $\mu_{\psi}=\mu_{q(1/\delta)}$. Then for  $\mu_{\phi} $-almost every $x$,  the measure
$\mu_{q(1/\delta)}$ is also supported on the set $ \mathcal{R}_{\geq
1/\delta}(x) .$ In particular, we conclude that  $ \dim  \,  \mathcal{R}_{\geq 1/\delta }(x)  \geq   \dim  \, \mu_{q(1/\delta )} $.

Now,  consider a  countable sequence   $(\delta_n)_{n\geq 1}$ such
that $1/\delta_n$ is dense in the interval $[\alpha_{\max} ,
\alpha_+]$. The above argument applies to each $\delta_n$. Taking a countable  intersection of   full $\mup$-measure
sets, we find a set of full  $\mup$-measure of points $x$ such that for all $n\geq 1$, $\mu_{q(1/\delta_n )}$ is
also supported on the set $ \mathcal{R}_{\geq 1/\delta_n}(x) .$

\sk

Let us fix  $\delta_0 $ such that  $\alpha_{\max} < 1/\delta_0  \leq \alpha_+$, and consider a subsequence  $(\delta_{\varphi(n)})_{n\geq 1}$  decreasing to $\delta_0$. By  (\ref{F}),  for every integer $n$,
\[
   \dim ( \mathcal{F}^{\delta_0}) \geq \dim  \mathcal{R}_{\geq 1/\delta_{\varphi(n)}}(x)  \geq   \dim   (\mu_{q(1/\delta_{\varphi(n)} )})  = D_{\mu_\phi}( {1}/{\delta_{\varphi(n)}}).
\]
Using the continuity of $D_{\mu_\phi}$ on its support, we see that for $\mu_\phi$-almost every $x\in \zu$,
\[
   \dim ( \mathcal{F}^{\delta_0}) \geq \dim (\mu_{q(1/\delta_0 )}) = D_{\mu_\phi}( {1}/{\delta_0}).
\]

Conversely,  by choosing an increasing subsequence $(\delta_{\varphi(n)})_{n\geq 1}$ converging to $\delta_0$, by (\ref{F}) and Proposition \ref{III-u}, we
have for $\mu_{\phi} $-almost every $x$,
\[
   \dim ( \mathcal{F}^{\delta_0}) \leq \inf_{n}  \dim \big( \mathcal{E}_{\geq {1}/{\delta_{\varphi(n)}}} \big) = \inf_{n}   D_{\mu_\phi}({1}/{\delta_{\varphi(n)}}) = D_{\mu_\phi}({1}/{\delta_0}).
\]

  This completes the proofs for the Part III and for  item 3. of Theorem \ref{main-thm1}.

%%%%%%%%%%%%%%%%%%%%%%%%%%%%%%%%%%%%%%
%%%%%%%%%%%%%%%%%%%%%%%%%%%%%%%%%%%%%%
%%%%%%%%%%%%%%%%%%%%%%%%%%%%%%%%%%%%%%
%%%%%%%%%%%%%%%%%%%%%%%%%%%%%%%%%%%%%%
%%%%%%%%%%%%%%%%%%%%%%%%%%%%%%%%%%%%%%
%%%%%%%%%%%%%%%%%%%%%%%%%%%%%%%%%%%%%%
%%%%%%%%%%%%%%%%%%%%%%%%%%%%%%%%%%%%%%
%%%%%%%%%%%%%%%%%%%%%%%%%%%%%%%%%%%%%%
%%%%%%%%%%%%%%%%%%%%%%%%%%%%%%%%%%%%%%
%%%%%%%%%%%%%%%%%%%%%%%%%%%%%%%%%%%%%%
 
\section{Part II  of the spectrum:  $\dim \mu_\phi< 1/\delta \leq \alpha_{\max}$}
\label{secpart2}

%%%%%%%%%%%%%%%%%%%%%%%%%%%%%%%%%%%%%%
%%%%%%%%%%%%%%%%%%%%%%%%%%%%%%%%%%%%%%
%%%%%%%%%%%%%%%%%%%%%%%%%%%%%%%%%%%%%%
%%%%%%%%%%%%%%%%%%%%%%%%%%%%%%%%%%%%%%
%%%%%%%%%%%%%%%%%%%%%%%%%%%%%%%%%%%%%%
%%%%%%%%%%%%%%%%%%%%%%%%%%%%%%%%%%%%%%
%%%%%%%%%%%%%%%%%%%%%%%%%%%%%%%%%%%%%%
%%%%%%%%%%%%%%%%%%%%%%%%%%%%%%%%%%%%%%
%%%%%%%%%%%%%%%%%%%%%%%%%%%%%%%%%%%%%%
%%%%%%%%%%%%%%%%%%%%%%%%%%%%%%%%%%%%%%

%%%%%%%%%%%%%%%%%%%%%%%%%%%%%%%%%%%%%%
\begin{proposition}
\label{propfinal1}
 If $\dim {\mu_\phi}<s<\alpha_{\max}$ then for $\mu_\phi$-almost every $x$ we have
\begin{equation}\label{LowerBoundIhn1} \dim   \mathcal{R}_{  \le  s} (x) \ge D_{\mu_\phi}(s).
  \end{equation}
\end{proposition}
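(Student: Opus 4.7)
The plan is to construct, inside $\mathcal{R}_{\leq s}(x)$, a Borel set carrying a measure of Hausdorff dimension exactly $D_{\mu_\phi}(s)$. Since the map $q\mapsto\alpha(q)$ is continuous and strictly decreasing on $\mathbb{R}$, with $\alpha(1)=\dim\mu_\phi$ and $\alpha(0)=\alpha_{\max}$, the hypothesis $\dim\mu_\phi<s<\alpha_{\max}$ produces a unique parameter $q=q(s)\in(0,1)$ for which $\alpha(q)=s$. The associated Gibbs measure $\mu_q$ defined by \eqref{defmuq} then satisfies
\[
\dim\mu_q \;=\; D_{\mu_\phi}(\alpha(q)) \;=\; D_{\mu_\phi}(s),
\]
by the discussion following Theorem \ref{th_recall}.

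Next I would apply Corollary \ref{cor-R} with the second Gibbs measure chosen to be $\mu_\psi=\mu_q$. Using the definition of $\alpha(q)$ in \eqref{defalphaq}, this corollary gives that for $\mu_\phi\times\mu_q$-almost every $(x,y)$,
\[
R(x,y) \;=\; d_{\mu_\phi}(y) \;=\; \frac{\int_{\zu}(-\phi)\,d\mu_q}{\int_{\zu}\log|T'|\,d\mu_q} \;=\; \alpha(q) \;=\; s.
\]
A Fubini argument then yields a $\mu_\phi$-conull set of points $x$ for which the slice $\{y:R(x,y)=s\}$ has full $\mu_q$-measure. For any such $x$, this slice is contained in $\mathcal{R}_{\leq s}(x)$, so $\mu_q(\mathcal{R}_{\leq s}(x))=1$.

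To conclude I would invoke the very definition of $\dim\mu_q$ as the infimum of Hausdorff dimensions of Borel sets of positive $\mu_q$-measure; any Borel set of full $\mu_q$-measure must therefore have Hausdorff dimension at least $\dim\mu_q$. Applied to $\mathcal{R}_{\leq s}(x)$, this gives $\dim\mathcal{R}_{\leq s}(x) \geq \dim\mu_q = D_{\mu_\phi}(s)$, which is precisely \eqref{LowerBoundIhn1}. I do not expect a serious obstacle here: the real work has already been carried out in Section~\ref{sec:Hitting} (via Ornstein--Weiss, exponential decay of correlations and Galatolo's theorem, all leading to Corollary \ref{cor-R}) and in the multifractal formalism identifying $\dim\mu_q$ with $D_{\mu_\phi}(s)$. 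The only mild subtlety is that the $\mu_\phi$-conull set of $x$'s depends on $s$; this is harmless for the proposition as stated, and uniformization in $s$ can be deferred to the assembly of Part II of Theorem \ref{main-thm1}, where it is handled by a countable dense sequence together with the continuity of $D_{\mu_\phi}$, exactly in the spirit of Part III.
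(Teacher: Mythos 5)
Your proposal is correct and follows essentially the same route as the paper: choose $q=q(s)\in(0,1)$ with $\alpha(q)=s$, note $\dim\mu_q=D_{\mu_\phi}(s)$, apply Corollary \ref{cor-R} to $\mu_\phi$ and $\mu_q$, and conclude via the definition of $\dim\mu_q$. You simply spell out the Fubini step and the final dimension inequality a bit more explicitly than the paper does.
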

%%%%%%%%%%%%%%%%%%%%%%%%%%%%%%%%%%%%%%

%%%%%%%%%%%%%%%%%%%%%%%%%%%%%%%%%%%%%%
\begin{proof}
For $\dim {\mu_\phi}<s<\alpha_{\max}$, there exists a real number
${q_s}>0$ such that 
$${\int (-\phi)  \, d\mu_{{q_s}} \over \int \log |T'| \, d\mu_{{q_s}}}=s.
$$
By the
Gibbs property of $\mu_{\phi} $ and the ergodicity of $\mu_{q_s} $,
 the measure $\mu_{q_s}$ is supported on  $\mathcal{E}_{\mup}(s)$. Then by Corollary \ref{cor-R} applied to $\mup$ and $\mu_{q_s}$, for $\mu_\phi$-a.e. $x$ we have
\begin{equation*}\dim   \, \mathcal{R}_{  \le  s} (x)  \ge \dim {\mu_{q_s}}=D_{\mu_\phi}(s).
  \end{equation*}
\vskip -8pt
\end{proof} 
%%%%%%%%%%%%%%%%%%%%%%%%%%%%%%%%%%%%%%

We finish by bounding from above the spectrum $ \dim 
\mathcal{L}^\delta(x) $.
% The proof follows the same lines as that of
%Theorem 6.2 of \cite{FST}, though there are many technical
%differences (due to the choice of Markov maps and to the fact that
%we work on the whole interval $\zu$).

%%%%%%%%%%%%%%%%%%%%%%%%%%%%%%%%%%%%%%
\begin{proposition}
\label{propfinal2}
If $\dim {\mu_\phi}<s<\alpha_{max}$ then for $\mu_\phi$-almost every $x$ we
have
\begin{equation}
\label{UpperBoundIhn1} \dim  \mathcal{R}_{\leq s}(x)  \le D_{\mu_\phi}(s).
  \end{equation}
\end{proposition}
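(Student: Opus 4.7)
The strategy is to split $\mathcal R_{\le s}(x)$ according to the local dimension of $\mu_\phi$ at the target $y$, using Proposition~\ref{propmaj} on the ``low-local-dimension'' piece and the Small Hitting Lemma (Lemma~\ref{Small hitting}) on the ``high-local-dimension'' piece. Fix $\epsilon>0$ small so that $s+\epsilon<\alpha_{\max}$ and write
\[
\mathcal R_{\le s}(x) \,=\, \bigl(\mathcal R_{\le s}(x)\cap\mathcal E_{\le s+\epsilon}\bigr) \,\cup\, \bigl(\mathcal R_{\le s}(x)\cap\mathcal E_{>s+\epsilon}\bigr).
\]
The first summand is contained in $\mathcal E_{\le s+\epsilon}$, hence has dimension at most $D_{\mu_\phi}(s+\epsilon)$ by Proposition~\ref{propmaj}; continuity of the spectrum and a countable sequence $\epsilon\to 0$ dispose of it. The rest of the argument bounds the dimension of the second summand by $D_{\mu_\phi}(s)+O(\epsilon)$.

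\textbf{Covering via Small Hitting.} If $y$ lies in the second summand, then $R(x,y)\le s$ produces infinitely many $n$ with $\tau(x,I_n(y))\le|I_n(y)|^{-(s+\epsilon/4)}$, while $\underline d_{\mu_\phi}(y)>s+\epsilon$ gives $\mu_\phi(I_n(y))\le|I_n(y)|^{s+\epsilon/2}$ for all $n$ large. Discretize $[s+\epsilon/2,\alpha_+]$ with step $\epsilon/8$ (a finite grid, since $\alpha_+<\infty$ for a Markov map), and for each grid value $\alpha$ set
\[
\mathcal D_n^{(\alpha)}(x) \,:=\, \bigl\{C\in\mathcal C_n \,:\, \mu_\phi(C)\in[2^{-(\alpha+\epsilon/8)n},2^{-\alpha n}],\ \tau(x,C)\le 2^{(s+\epsilon/4)n}\bigr\}.
\]
Every such $y$ belongs to some $C\in\mathcal D_n^{(\alpha)}(x)$ for infinitely many $n$ and some grid $\alpha$, so $\bigcup_{n\ge N}\bigcup_\alpha\mathcal D_n^{(\alpha)}(x)$ covers the second summand. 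The Gibbs property gives the standard large-deviation count $\#\{C\in\mathcal C_n:\mu_\phi(C)\in[2^{-(\alpha+\epsilon/8)n},2^{-\alpha n}]\}\le 2^{(D_{\mu_\phi}(\alpha)+\epsilon/8)n}$, which serves as the parameter $b$ in Lemma~\ref{Small hitting}. With $a=s+\epsilon/4$ and $c=\max\bigl(\epsilon,\,D_{\mu_\phi}(\alpha)-\alpha+s+\epsilon/2\bigr)$, the hypothesis $\eta_{\rm lem}=\alpha-s-\epsilon/4>b-c$ is satisfied thanks to $\alpha\ge s+\epsilon/2$, and the lemma yields $\mu_\phi\bigl(|\mathcal D_n^{(\alpha)}(x)|>2^{cn}\bigr)\le 2^{-n}$.

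\textbf{Concavity and conclusion.} Borel--Cantelli, applied over $n$ and the finite $\alpha$-grid, provides a $\mu_\phi$-conull set of $x$ on which $|\mathcal D_n^{(\alpha)}(x)|\le 2^{cn}$ holds for every grid $\alpha$ and all $n$ large enough. By Theorem~\ref{th_recall} the derivative of the spectrum at $\alpha=\dim\mu_\phi$ equals $q(\dim\mu_\phi)=1$, so concavity of $D_{\mu_\phi}$ makes $\alpha\mapsto D_{\mu_\phi}(\alpha)-\alpha$ decreasing on $[\dim\mu_\phi,\alpha_+]$. The hypothesis $s>\dim\mu_\phi$ then gives $\sup_{\alpha\ge s}(D_{\mu_\phi}(\alpha)-\alpha)=D_{\mu_\phi}(s)-s$, whence $c\le D_{\mu_\phi}(s)+\epsilon/2$ uniformly on the grid. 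Estimating the $a'$-Hausdorff premeasure of the covering by intervals of length $\le L\cdot 2^{-n}$ produces
\[
\mathcal H^{a'}_{L\cdot 2^{-N}}\bigl(\mathcal R_{\le s}(x)\cap\mathcal E_{>s+\epsilon}\bigr) \,\le\, \sum_{n\ge N}(\#\text{grid})\cdot 2^{(D_{\mu_\phi}(s)+\epsilon/2)n}\cdot L^{a'}2^{-a'n},
\]
which tends to $0$ for $a'>D_{\mu_\phi}(s)+\epsilon/2$; letting $\epsilon\to 0$ through a countable sequence and intersecting the resulting full-measure sets gives $\dim\mathcal R_{\le s}(x)\le D_{\mu_\phi}(s)$ for $\mu_\phi$-a.e.\ $x$.

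The main obstacle is assembling the Small Hitting bounds uniformly across the $\alpha$-grid within a single $\mu_\phi$-conull set of $x$, and invoking the large-deviation count $\#\{C\in\mathcal C_n:\mu_\phi(C)\approx 2^{-\alpha n}\}\le 2^{(D_{\mu_\phi}(\alpha)+o(1))n}$ in a uniform-in-$\alpha$ manner. Both facts are standard consequences of the Gibbs property of $\mu_\phi$, but the borderline case where $\alpha$ is close to $s+\epsilon/2$ (and so the parameters $a$, $b$, $c$, $\eta_{\rm lem}$ of Lemma~\ref{Small hitting} are tight) requires care in choosing the grid step.
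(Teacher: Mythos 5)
Your proof follows the same overall strategy as the paper's: split $\mathcal R_{\le s}(x)$ according to the lower local dimension of $\mu_\phi$, dispose of the low-dimension piece via Proposition~\ref{propmaj}, discretize the high-dimension piece into a finite grid of local-dimension buckets, apply the Small Hitting Lemma bucket-by-bucket with Borel--Cantelli, and close via concavity of $D_{\mu_\phi}$. Your unified choice $c=\max\bigl(\epsilon,\,D_{\mu_\phi}(\alpha)-\alpha+s+\epsilon/2\bigr)$, combined with the observation that $\alpha\mapsto D_{\mu_\phi}(\alpha)-\alpha$ is decreasing on $[\dim\mu_\phi,\alpha_+]$ because $D'_{\mu_\phi}(\dim\mu_\phi)=q(\dim\mu_\phi)=1$ and $D_{\mu_\phi}$ is concave, is a tidy way to handle all buckets at once; the paper instead spells out three explicit cases ($h_i''<\alpha_{\max}$, $\alpha_{\max}\in[h_i',h_i'']$, $h_i'>\alpha_{\max}$). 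That streamlining is a genuine, if minor, improvement in presentation. (You should still flag that near $\alpha_{\max}$ the cardinality bound exceeds $2^n$ only by a bounded factor $L$, which, as the paper notes, is harmless in Lemma~\ref{Small hitting}.)

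There is, however, a real gap in your covering step. You assert that $R(x,y)\le s$ produces infinitely many $n$ with $\tau(x,I_n(y))\le|I_n(y)|^{-(s+\epsilon/4)}$, and then conclude that $y$ lies in some $C\in\mathcal D_n^{(\alpha)}(x)$, where membership in $\mathcal D_n^{(\alpha)}(x)$ requires the \emph{same} basic interval $C$ to satisfy both the bucket condition $\mu_\phi(C)\in[2^{-(\alpha+\epsilon/8)n},2^{-\alpha n}]$ and the hit condition $\tau(x,C)\le 2^{(s+\epsilon/4)n}$. But the definition \eqref{R} of $R(x,y)$ is in terms of the hitting time of the \emph{ball} $B(y,2^{-n})$, not of the basic interval $I_n(y)$. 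When $y$ is close to the boundary of $I_n(y)$, the orbit may first enter $B(y,2^{-n})$ through the neighboring basic interval, so one cannot deduce $\tau(x,I_n(y))\le 2^{(s+\epsilon/4)n}$. The interval that is hit, $I_n(T^p x)$, and the interval controlling the local dimension of $\mu_\phi$ at $y$, $I_n(y)$, may differ, and adjacent basic intervals of the same generation can have wildly different $\mu_\phi$-measures (this is the usual phenomenon for Gibbs cylinders), so the bucket condition does not transfer. The paper works around exactly this point by covering with the $2^{-n}$-dilations $\{y: d(y,C)\le 2^{-n}\}$ of hit intervals, i.e.\ via the sets $\widetilde{\mathcal Y}_{n,a}(x)$ together with $\mathcal Y_n(h_i',h_i'')$, and counts only intervals in $\mathcal C_{n,a,h_i',h_i''}(x)=\mathcal C_{n,a}(x)\cap\mathcal C_n(h_i',h_i'')$. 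You need to rewrite the covering step in this form (or otherwise reconcile the hit interval with the bucket interval); as written, the passage from $R(x,y)\le s$ to $I_n(y)\in\mathcal D_n^{(\alpha)}(x)$ is not justified.
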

%%%%%%%%%%%%%%%%%%%%%%%%%%%%%%%%%%%%%%

%%%%%%%%%%%%%%%%%%%%%%%%%%%%%%%%%%%%%%
\begin{proof}
Fix $s\in (\dim  \mup, \alpha_{\max})$, and let us decompose $\mathcal{R}_{\leq s}(x)$ into
$$
    \mathcal{R}_{\leq s}(x) =  \left( \mathcal{R}_{\leq s}(x)  \cap \mathcal{E}_{\leq s}
     \right) \
      \bigcup  \ \left(\mathcal{R}_{\leq s}(x) \cap  \mathcal{E}_{>s}  \right).
$$
Since $s$ lies in the increasing part of the spectrum, by
Proposition \ref{propmaj},  we have the upper bound $\dim 
\mathcal{E}_{\leq s}  \le D_{\mu_\phi}(s)$. Thus,
to obtain \eqref{UpperBoundIhn1}, it suffices to prove that
$$
\dim  \left( \mathcal{R}_{\leq s}(x) \cap \mathcal{E}_{>s} \right) \le D_{\mu_\phi}(s).
$$

%Recalling the distorsion property of the basic intervals and Section
%\ref{coveringsection},
Recall that  $\C_n $ forms a covering of $\zu$ by basic intervals of size $\sim 2^{-n}$, these intervals having disjoint interiors.
%all satisfying
%\begin{equation}
%\label{distor4}
%L^{-1}2^{-n} \leq |C_{n,i}|\leq L 2^{-n}.
%\end{equation}
%We denote by $\mathcal{C}_n$ the set of these intervals, for every $n\geq 1$.

Let $0< h'<h'' $. We define the subsets   $\mathcal{C}_n(h',h'')$   of  $\mathcal{C}_n$ and $\mathcal{Y}_n(h',h'')$ of $\zu$
\begin{eqnarray*}
\mathcal{C}_n(h',h'')  & =  & \{ C\in \mathcal{C}_n:  |C|^{h''} \le {\mu_\phi}(C) \le
 |C|^{h'} \}   \ \  \subset \mathcal{C}_n,\\ 
\mathcal{Y}_n(h',h'') &  = &  \{ y\in \zu: \exists \, C\in \mathcal{C}_n (h',h'') \mbox{ such that } y\in C \}  \ \ \subset\zu.
\end{eqnarray*}

%%%%%%%%%%%%%%%%%%%%%%%%%%%%
\begin{lemma}
%Let $0<h'<h''$. 
For every $\epsi>0$, there exists an integer $n_{h',h'',\e}$ large enough so that as soon as $n\geq n_{h',h'',\e}$,
\begin{eqnarray}\label{cardinality1}
  {\rm Card} \, \mathcal{C}_n
(h', h'') &  \le &  2^{n  (D_{\mu_\phi}(h'')+\epsi) }
   \quad {\rm if } \ \ \  h''< \alpha_{\max},  \\
   \label{cardinality2}
   \ \ \ \  {\rm Card} \, \mathcal{C}_n
(h', h'')  & \le  & 2^{n  (D_{\mu_\phi}(h') +\epsi)}
   \,  \quad {\rm if } \ \ \  h' > \alpha_{\max}.
\end{eqnarray}
\end{lemma}
%%%%%%%%%%%%%%%%%%%%%%%%%%%%
These properties  follow
again  from  standard large deviations properties (see \cite{BMP}).
%%%%%%%%%%%%%%%%%%%%%%%%%%%%

\mk

%There exists a positive number $q$ such that $ D_{\mu_\phi}(h_2) = \eta_\phi(q)+ h_2
%q$. Then
%$$
%  2^{-qh_2n} {\rm Card} \, \mathcal{G}_n
%(h_1, h_2)
%  \le \sum_{C \in \mathcal{G}_n
%(h_1, h_2) } \mu_\phi(C)^q
%     \le 2^{n \eta_\phi(q)}.
%$$
%
%Let
%$$
%    \mathcal{A}_x(s) = \left\{y\,:\, R(x,y)  \le s \right\}.
%$$

%

%Assume $h_2< \alpha_{\max}$ (the other case may be similarly proved). We are located in the increasing part of the spectrum $D_{\mu_\phi}$.

%$$ \mathcal{H}(h', h'') =
% \left\{y\,:\,
% h' \le \underline{d}_{\mu_\phi}(y) \le h''
%  \right\} \subset  \left\{y\,:\,
% |I_n(y)|^{h''+\zeta} \le {\mu_\phi}(I_n(y)) \le
% |I_n(y)|^{h'-\zeta}, i.o.
%  \right\}.
%$$

Let $\zeta>0$ be a positive real number that we will soon choose in a suitable manner.
Set $h'_1=s $ and $h''_1=s+{1\over 2}\zeta $. It is possible to cover the  interval $[s+{1\over
2}\zeta, \alpha_+ ]$  by a finite number of open intervals
 $\{(h'_i, h''_i)\}_{2\leq i\leq \ell} $   with length less than $\zeta$.   We have the inclusion
\begin{equation}
\label{decomp1}
\mathcal{E}_{>s}  \subset   \bigcup_{N=1}^{+\infty} \ \ \ \bigcap_{n\geq N} \ \    \bigcup_{i=1}^{\ell} \  \mathcal{Y}_n(h'_i, h''_i)  .
\end{equation}
Recall that $ I  _n(y)$ is the unique basic interval contained in  $\mathcal{C}_n$ containing $y$.
The embedding property (\ref{decomp1}) emphasizes that when
$\underline{d}_{\mup}(y)>s$ for a point $y\in \zu$, then
necessarily $\mup(I_n(y)) < |I_n(y)|^s$ for {\it every} integer $n$
large enough, not only for an infinite number of integers.

We introduce the subset   $ {\mathcal{C}}_{n,a}(x) $   of  $\mathcal{C}_n$
$$ {\mathcal{C}}_{ n,a}(x):=\left\{C\in \mathcal{C}_n: \ \tau(x,C) < 2^{an} \right\}.$$

By the definition of $\mathcal{R}_{\leq s}(x)$, for any real number $a>s$,
$\mathcal{R}_{\leq s}(x)  \subset \{y\in \zu: R(x,y)
<a\}$. The distorsion property \eqref{distor}  
guarantees that $I_n(y)$ tends to zero very regularly when $n$ tends
to infinity. Hence,  if $y \in \mathcal{R}_{\leq s}(x)$, there is an
infinite number of integers $n$ such that $ \tau_{2^{-n}}(x,y) \leq
2^{an}$, which means that $T^p x  \in B(y,2^{-n}) $ for some $p\leq
2^{an}$.

% Equivalently,  if $n$ is one  integer such that $I_m(y) \in
%\mathcal{C}_n$, there exists $C\in {\mathcal{C}}_{ n ,a}(x)   $ such
%that  $d(y,C) <  L |C|$ (the problem may come from the fact that   $
%\tau_{|I_m(y)|}(x,y) \leq |I_m(y)| ^{-a}$ means that $T^ p x $ is
%close to $y$ but not necessarily in $I_m(y)$).

Denote by $d(y, C)$ the distance from  $y$ to the set $C$.
We introduce the sets
$$
\widetilde { \mathcal{Y}}_{n,a}(x) = \{ y\in \zu: \exists \, C\in
{\mathcal{C}}_{ n,a}(x)  \mbox{ such that } d(y,C) \leq  2^{-n} \}.
$$
 Since $T^p x  \in B(y,2^{-n}) $ implies that $d(y, I  _n(T^p x)) \leq
2^{-n}$, we have
 \begin{equation}
\label{decomp2}
\mathcal{R}_{\leq s}(x)   \subset \bigcap_{N=1}^{+\infty}   \ \ \bigcup_{n\geq N} \  \widetilde { \mathcal{Y}}_{n,a}(x).
\end{equation}
 Thus combining \eqref{decomp1} and \eqref{decomp2}, we get that $\displaystyle \mathcal{R}_{\leq s}(x)  \cap \mathcal{E}_{>s}$ is included in
\begin{eqnarray*}
\bigcap_{N=1}^{+\infty}   \ \ \bigcup_{n\geq N} \   \left(    \widetilde { \mathcal{Y}}_{n,a}(x) \cap
\bigcup_{i=1}^{\ell}\mathcal{Y}_n(h_i',
h_i'')\right) 
 \subset  \bigcup_{i=1}^{\ell}  \ \ \bigcap_{N=1}^{+\infty}    \ \ \bigcup_{n\geq N} \
\left( \widetilde { \mathcal{Y}}_{n,a}(x) \cap \mathcal{Y}_n(h_i', h_i'')\right).
\end{eqnarray*}
The above inversion of $\cap$ and $\cup$ follows from the fact that there is a finite number of intervals $[h'_i, h''_i]$.
% and that \eqref{decomp1} holds for every $n$ large enough (for every $y\in \mathcal{E}_{>s} $).
Thus,  we need only to show that for all $1\leq i \leq \ell $,
$$
\forall \, \epsi>0, \ \ \ \dim   \big( \limsup_{n\to \infty}  \big( \widetilde { \mathcal{Y}}_{n,a}(x) \cap \mathcal{Y}_n(h_i',
h_i'')\big) \big) \le  D_{\mu_\phi}(s)+ \epsi .
$$

%%%%%%%%%%%%%%%%%%%%%%%%%%%%
Let $\mathcal{C}_{n,a, h_i',h_i''}(x)$ be the subset of the basic
intervals of  $\mathcal{C}_n$  belonging  to  both $\mathcal{C}_{ n,a}(x) $ and $
\mathcal{C}_n(h_i', h_i'')$.  
\begin{lemma}
\label{lemfinal}
For every $a\in (s, \alpha_{\max})$, for every $\epsi>0$, for each $2\leq i
\leq \ell $,
\begin{eqnarray}\label{sum-proba}
   \sum_n \mu_\phi \big(\{x:  {\rm Card}\,  \mathcal{C}_{n,a, h_i',h_i''}(x)> 2^{n (D_{\mu_\phi} (a)+\epsi)}\} \big)<\infty.
\end{eqnarray}
\end{lemma}
%%%%%%%%%%%%%%%%%%%%%%%%%%%%

 Observe that in Lemma \ref{lemfinal}, we do not consider the first interval $[h'_1,h''_1]$.  For this interval, \eqref{sum-proba}  follows from \eqref{cardinality1}, if   $\zeta$ is small enough. Indeed, if $\zeta<\!\!< 1$,     $h''_1= s+\zeta/2$ is very close to $s$, so that $D_{\mup}(h''_1)$ is close to $D_{\mup}(s)$.

Let us assume for a while that Lemma \ref{lemfinal} holds true. Then,   the Borel-Cantelli lemma yields that for $\mu_\phi$-almost every $x$, there exists an integer $n(x)$ such that 
$$\mbox{for $n\geq n(x)$,  } \ \ {\rm Card} \,   \mathcal{C}_{n,a, h_i',h_i''}(x)  \leq 2^{n (D_{\mu_\phi} (a)+\epsi)}.$$

In order to obtain a covering of the set   $\limsup_{n\to \infty}  \big( \widetilde { \mathcal{Y}}_{n,a}(x) \cap \mathcal{Y}_n(h_i',
h_i'')\big) $, by construction one   considers, for any $N\geq 1$,  the union
$$\bigcup_{n\ge N}  \ \    \bigcup_{C\in \mathcal{C}_{n,a, h_i',h_i''}(x)}  \ \{y\in \zu: d(y,C)\leq 2^{-n}\}.$$
Using this family of coverings, if $N\ge n(x)$, then for any $
\epsi>0$,  the $({D_{\mu_\phi}(a)+ 2\epsi})$-Hausdorff measure of the
above limsup set is bounded by
\begin{eqnarray*}
  \sum_{n\ge N} \,  \sum_{C\in  \mathcal{C}_{n,a, h_i',h_i''}(x)}  \!   \!   \!  
((L+2)\cdot 2^{-n}
    )^{D_{\mu_\phi}(a)+ 2 \epsi}  \!   &   \leq  &  \!    \Theta'  \ \sum_{n\ge N} 2^{-n (D_{\mu_\phi}(a)+ 2\epsi)} \cdot 2^{n( D_{\mu_\phi}(a) +\epsi)}\\
 \!  &    \leq &  \!    \Theta' \  \sum_{n\ge N} 2^{-n  \epsi}
<\infty, \vspace{-1mm}
\end{eqnarray*}
where $\Theta'$ is some constant depending on $L$, $a$, $\mup$ and
$\epsi$. Hence, letting $N$ tend to infinity, we see that the
$({D_{\mu_\phi}(a)+ 2\epsi})$-Hausdorff measure of  the limsup set
$\limsup_{n\to \infty}  \big( \widetilde { \mathcal{Y}}_{n,a}(x)
\cap \mathcal{Y}_n(h_i', h_i'')\big) $ is necessarily 0. This
implies that
$$
\dim  \Big( \limsup_{n\to \infty}  \big( \widetilde { \mathcal{Y}}_{n,a}(x) \cap \mathcal{Y}_n(h_i',
h_i'')\big) \Big) \le D_{\mu_\phi}(a) + 2\epsi.$$
We finish the proof of Proposition \ref{propfinal2}
 by letting
first $\e \downarrow 0$  and then $a \downarrow s$.

\sk

%%%%%%%%%%%%%%%%%%%%%%%%%%%%
%%%%%%%%%%%%%%%%%%%%%%%%%%%%
%%%%%%%%%%%%%%%%%%%%%%%%%%%%
%%%%%%%%%%%%%%%%%%%%%%%%%%%%
%%%%%%%%%%%%%%%%%%%%%%%%%%%%

It remains us to prove  Lemma \ref{lemfinal}. Let $a \in (s, \alpha_{\max})$. It is enough to prove Lemma
\ref{lemfinal} for $a$ close to $s$. Hence we suppose that $a<h'_2$.
%$a\notin [h'_2,h''_2]$.

We assume that the intervals $[h'_i,h''_i]$ are chosen so that,
except for at most one of them, either $h'_i > \alpha_{\max}$ or
$h''_i <  \alpha_{\max}$. In other words, we suppose that there is
only one integer $i\in \{2,\ldots, l\}$ such that $\alpha_{\max} \in
(h'_i,h''_i)$.  We   use     two key properties:
\begin{itemize}

 \vspace{-1mm}\item
The multifractal spectrum $D_{\mup}$ is real-analytic and concave on
$]\alpha_-,\alpha_+[$.

 \vspace{-1mm}
\item
For every    $h\geq a >s> \dim  \mup$, the derivative of
$D_{\mup}$ at $s$ is strictly less than 1, and the derivative
$(D_{\mup})'(s)$ is decreasing. Hence,  there is  a real number
$0<\xi_a=(D_{\mup})'(a)<1$ such that  for every $h$ in every
interval $[h'_i,h''_i]$ ($i\geq 2$),
\begin{equation*}
\mbox{for every $h \geq a$, }  \ \ \ (D_{\mup})'(h)\leq \xi_a.
\end{equation*}
\end{itemize}
 
 \vspace{-5pt}

 We distinguish three cases.

\sk

$\bullet$ {\bf  If $h_i''< \alpha_{\max} $:} Take
$b=D_{\mu_\phi}(h_i'')+\epsi$, $  c=D_{\mu_\phi}(a)+\epsi$, and $
\eta=h_i'-a$. Then on the one hand, by (\ref{cardinality1}), for $n$
large enough there are at most $2^{bn}$ basic intervals $C$ in
$\mathcal{C}_n(h'_i,h''_i) =  \mathcal{C}_n(a+\eta,h''_i) $.
%(i.e. of length between $L^{-1} 2^{-n}$
%and $L 2^{-n}$) with a $\mup$-measure belonging to $[2^{-h''_in} , 2^{-h'_in} ]=[2^{-h''_in} ,  2^{-(a+\eta)n}]$.
 On the other hand, by the mean value
theorem and the fact that $D_{\mu_\phi}(\cdot)$ is increasing on $(\alpha_-,
\alpha_{\max})$,
\begin{eqnarray*}
  b-c =D_{\mu_\phi}(h''_i)- D_{\mu_\phi}(a) <  \xi_a (h''_i-a) = \xi_a   (h'_i-a)+ \xi_a (h''_i-h'_i).
\end{eqnarray*}
%\begin{eqnarray*}
%  b-c =D_{\mu_\phi}(h''_i)- D_{\mu_\phi}(a) <  D_{\mu_\phi}'(a) (h''_i-a) = D_{\mu_\phi}'(a)   (h'_i-a)+ D_{\mu_\phi}'(a) (h''_i-h'_i).
%\end{eqnarray*}
Since $ \xi_a<1$ and $h''_i-h'_i<\zeta$, we can choose $\zeta$
small enough such that $$b-c<h'_i-a=\eta.$$ This choice of $\zeta$
can be uniform, i.e. valid for every index $i$ such that $h_i''<
\alpha_{\max} $.

By Lemma \ref{Small
hitting},  for sufficiently large $n$,
\begin{equation}
\label{final}
\mu_{\phi} \left( \left\{x: \,\begin{cases}  \ \ \tau(x,C) \le  2^{an} \text{ for } 2^{cn}
 \text{ distinct  }   \\ \text{  \ \ intervals  $C$ among the}\ 2^{bn} \text{
intervals}    \end{cases} \right\} \right)\le 2^{-  n}.
\end{equation}
This is equivalent to say that
\begin{align*}
\mu_\phi \big( \{x:  {\rm Card}  \, \mathcal{C}_{n,a, h_i',h_i''}(x)  > 2^{n(
D_{\mu_\phi}(a) + \epsi)}  \} \big) \le 2^{ - n}.
\end{align*}
Then (\ref{sum-proba}) follows.

\mk

$\bullet$ {\bf If $i$ is the unique integer such that $h_i'\leq
\alpha_{\max} \leq h_i'' $:} This occurs for one and only one
interval $[h'_i,h''_i]$. Recall that the cardinality of
$\mathcal{C}_n$ is less than $L \cdot 2^{n}$. Take $b=1$,
$c=D_{\mu_\phi}(a)$, and $ \eta=h_i'-a$. Then
\begin{eqnarray*}
  b-c  = 1- D_{\mu_\phi}(a) <  \xi_a(\alpha_{\max}-a) =  \xi_a(h'_i-a)+ \xi_a(\alpha_{\max}-h'_i).
\end{eqnarray*}
Since $D_{\mu_\phi}'(a)<1$ and $\alpha _{\max}-h'_i\le h''_i-h'_i<\zeta$, we can
choose $\zeta$ small enough   that $$b-c<h'_i-a=\eta.$$ By Lemma \ref{Small hitting} and applying the same arguments as above,  for
  large $n$, we have
$$
\mu_{\phi} \left( \left\{x: \,\begin{cases}  \ \ \tau(x,C) \le  2^{an} \text{ for } 2^{cn}
 \text{ distinct  }   \\ \text{  \ \ intervals  $C$ among  }\ 2^{bn} \text{
intervals of $\mathcal{C}_n$}    \end{cases} \right\} \right)\le 2^{-  n}.
$$
It is not difficult to prove that we can replace $2^{bn}$ by $L\cdot 2^{bn}$,  
since constants do not infer in the proofs of Lemma \ref{Small hitting}. In other words,
\begin{align*}
\mu_\phi \Big( \{x:  {\rm Card}  \, \mathcal{C}_{n,a, h_i',h_i''}(x)  > 2^{n
D_{\mu_\phi}(a)  }  \} \big) \le 2^{-  n},
\end{align*}
 and (\ref{sum-proba}) is proved.

\mk

$\bullet$ {\bf If  $ \alpha_{\max}<h_i'$:} Take $b=D_{\mu_\phi}(h_i')+\epsi$, $c=D_{\mu_\phi}(a)+\epsi$, and $
\eta=h_i'-a$. On the one hand, by (\ref{cardinality2}),  for $n$ large enough  there
are at most $2^{bn}$ basic intervals  in  $\mathcal{C}_n(h'_i,h''_i) =  \mathcal{C}_n(a+\eta,h''_i) $.
On the other hand,
\begin{eqnarray*}
  b-c = D_{\mu_\phi}(h'_i)- D_{\mu_\phi}(a) <  \xi_a (h'_i-a)<(h'_i-a).
\end{eqnarray*}
 Thus  by Lemma \ref{Small hitting},  for
sufficiently large $n$, \eqref{final} follows, and (\ref{sum-proba}) is proved.
\end{proof} 
%%%%%%%%%%%%%%%%%%%%%%%%%%%%%%%%%%%%%%

%%%%%%%%%%%%%%%%%%%%%%%%%%%%%%%%%%%%%%
%%%%%%%%%%%%%%%%%%%%%%%%%%%%%%%%%%%%%%
%%%%%%%%%%%%%%%%%%%%%%%%%%%%%%%%%%%%%%
%%%%%%%%%%%%%%%%%%%%%%%%%%%%%%%%%%%%%%
%%%%%%%%%%%%%%%%%%%%%%%%%%%%%%%%%%%%%%
%%%%%%%%%%%%%%%%%%%%%%%%%%%%%%%%%%%%%%
%%%%%%%%%%%%%%%%%%%%%%%%%%%%%%%%%%%%%%
%%%%%%%%%%%%%%%%%%%%%%%%%%%%%%%%%%%%%%
\subsection{Conclusion.}
Combining Propositions  \ref{propfinal1} and  \ref{propfinal2}, we
have that for every $s\in (\dim  {\mup},\alpha_{\max})$,  
\begin{equation}
\mbox{for
$\mu_\phi$-almost every $x$,
  } \ \ \dim _H  \mathcal{R}_{\leq s} (x)= D_{\mup}(s).
\end{equation}

Then by Lemma \ref{lem-inclusion}, we have for every $\delta$ such
that $\dim  \mu_\phi< 1/\delta \leq \alpha_{\max}$, 
\begin{equation}\label{finfin}
\mbox{for
$\mu_\phi$-almost every $x$,
  } \ \ \dim _H  \Lkap= D_{\mup}(1/\delta).
\end{equation}

As we did in proving Part III and Part IV, by  noticing the
monotonicity of sets $\Lkap$ with respect to $\delta$ and applying
\eqref{finfin} to a dense countable set, we can obtain that for
$\mup$-almost every $x\in \zu$ for every $\delta$, (\ref{finfin})
holds. This concludes the proof.

%%%%%%%%%%%%%%%%%%%%%%%%%%%%%%%%%%%%%%
%%%%%%%%%%%%%%%%%%%%%%%%%%%%%%%%%%%%%%
%%%%%%%%%%%%%%%%%%%%%%%%%%%%%%%%%%%%%%
%%%%%%%%%%%%%%%%%%%%%%%%%%%%%%%%%%%%%%
%%%%%%%%%%%%%%%%%%%%%%%%%%%%%%%%%%%%%%
%%%%%%%%%%%%%%%%%%%%%%%%%%%%%%%%%%%%%%


\begin{thebibliography}{99}

\bibitem{Baladi2000} V. Baladi. Positive transfer operators and decay of
correlations.  \emph{Advanced Series in Nonlinear Dynamics, 16}. World
Scientific Publishing Co., Inc., River Edge, NJ, 2000.


\bibitem{BarralBennasrPeyriere} J. Barral, F. Ben Nasr and J. Peyri\`ere.  {Comparing
multifractal formalisms: The neighboring boxes conditions}. \emph{Asian
J. Math.} {\bf 7}, (2003),149--165.


\bibitem{BS} J. Barral and S. Seuret. Heterogeneous ubiquitous systems in $\mathbb R\sp d$ and Hausdorff dimension.
  \emph{Bull. Braz. Math. Soc. (N.S.)}  {\bf 38}(3) (2007),  467--515.


\bibitem{BS2} J. Barral and S. Seuret. Ubiquity and large intersections properties under digit frequencies constraints.
 \emph{Math. Proc. Cambridge Philos. Soc.} {\bf145} (3) (2008), 527--548.


\bibitem{BPS} L. Barreira, Y. Pesin and J. Schmeling. On a general concept of multifractality:
multifractal spectra for dimensions, entropies, and Lyapunov
exponents. multifractal rigidity. \emph{Chaos} {\bf 7} (1997), 27--38.


\bibitem{BV1}
V. Beresnevich and S.  Velani. A Mass Transference Principle and the DuffinÐSchaeffer conjecture for Hausdorff measures.
\emph{Ann. Maths (2)} {\bf 164}(3) (2006),  971--992.

\bibitem{BESIC}  {A. S. Besicovitch}. Sets of fractional dimension (IV): on rational approximation to real numbers. {\em J. London Math. Soc.} {\bf 9} (1934), 126--131.


\bibitem{Bowen1975} R. Bowen. Equilibrium states and the ergodic
theory of Anosov diffeomorphisms. Springer-Verlag, Berlin, 1975.


\bibitem{BMP} G. Brown, G. Michon and J. Peyri\`ere.  On the multifractal analysis of measures. \emph{J.
Stat. Phys.}  {\bf 66} (1992), 775--790.


\bibitem{BUGEAUD} Y. Bugeaud.  Approximation by algebraic integers and Hausdorff dimension. {\em J. London Math. Soc. } {\bf 65} (2002), 547--559.


\bibitem{Bu} Y. Bugeaud. A note on inhomogeneous diophantine approximation.
\emph{Glasg. Math. J.} \textbf{45} (2003), 105--110.

\bibitem{BUGEAUD2} Y.  Bugeaud, S. Harrap, S. Kristensen  and S. Velani. On shrinking targets for $\mathbb{Z}^m$-actions on the torii. \emph{Mathematika}  {\bf 56}  (2010), 193--202.


\bibitem{JWS} J.W.S. Cassels. An Introduction to Diophantine Approximation. \emph{Cambridge Tracts in Mathematics and Mathematical Physics.} %, No. 45.} 
Cambridge University Press, New York, 1957.


\bibitem{CLP} P. Collet, J. Lebowitz and A. Porzio. The dimension
spectrum of some dynamical systems. \emph{J. Stat. Phys.} \textbf{47}
(1987), 609--644.

\bibitem{DODVEL} {M. M. Dodson, M. V, Meli\'an and D. Pestana, S. L.
V\'elani}.  Patterson measure and Ubiquity. {\em Ann. Acad. Sci. Fenn. Ser. A I Math.} {\bf 20} (1995), 37--60.


\bibitem{EKL} M. Einsiedler, A. Katok and E. Lindenstrauss.
Invariant measures and the set of exceptions to Littlewood's conjecture. \emph{Ann. of Math. (2)} {\bf164} (2) (2006), 513--560.


\bibitem{FALC} K. J. Falconer.  Fractal geometry. Mathematical foundations and applications. Second edition. John Wiley \& Sons, Inc., Hoboken, NJ, 2003.


\bibitem{FST} A.-H. Fan, J. Schmeling and S. Troubetzkoy, Dynamical Diophantine approximation.  Preprint, 2009.

\bibitem{Ga} S. Galatolo. Dimension and hitting time in rapidly mixing
systems. \emph{Math. Res. Lett.} {\bf 14}(5)  (2007), 797--805.

\bibitem{HV} R. Hill and S. L. Velani. {Ergodic theory of shrinking targets}. \emph{Invent. math.} {\bf 119}, (1995), 175--198.


\bibitem{HV2} R. Hill and S. L. Velani. The shrinking target problem for matrix transformations of tori.  \emph{J. London Math. Soc. (2)}  {\bf 60}(2)  (1999),  381--398.

\bibitem{JARNIK} {V. Jarnik}. Diophantischen {A}pproximationen und  {H}ausdorffsches {M}ass. {\em  Mat. Sbornik } {\bf 36} (1929), 371--381.

\bibitem{KIM}  D. H.  Kim.  The shrinking target property of irrational rotations. \emph{Nonlinearity} {\bf 20}(7) (2007), 1637--1643.

\bibitem{KM} D. Kleinbock and G. A. Margulis. Flows on homogeneous spaces and Diophantine
approximation on manifolds. \emph{Ann. Math. (2)} {\bf 148} (1998), 339--360.


\bibitem{KM2}  D. Kleinbock, E. Lindenstrauss and B. Weiss. On fractal measures and Diophantine approximation. \emph{Selecta Math. (N.S.).} {\bf 10} (2004), 479--523.

\bibitem{Liverani-Saussol-Vaienti1998} C. Liverani, B. Saussol and S. Vaienti. Conformal
measure and decay of correlation for covering weighted systems.
\emph{Ergod. Th. \& Dynam. Sys.} {\bf 18}(6) (1998), 1399--1420.

\bibitem{OrnsteinWeiss} D. Ornstein and B. Weiss. Entropy and data
compression schemes. \emph{IEEE Trans. Inform. Theory} {\bf 39}(1) (1993), 78--83.

\bibitem{PesinWeiss1997Chaos} Y. Pesin and H. Weiss.
The multifractal analysis of Gibbs measures: motivation,
mathematical foundation, and examples.  \emph{Chaos}  {\bf 7}(1)  (1997), 89--106.

%\bibitem{PesinWeiss1997} Y. Pesin and H. Weiss, \emph{A multifractal analysis of equilibrium measures
%for conformal expanding maps and Moran-like geometric
%constructions}, J. Statist. Phys.,  {\bf 86}  (1997),  no. 1-2,
%233--275.

\bibitem{Parry-Pollicott1990} W. Parry and M. Pollicott. Zeta functions and the periodic
orbit structure of hyperbolic dynamics. \emph{Ast\'erisque} No. {\bf 187-188} (1990).


\bibitem{PHILIPPSTOUT} W. Philipp and W. Stout.  Almost Sure Invariance
Principles for Partial Sums of Weakly Dependent Random Variables.
 \emph{Mem. Amer. Math. Soc. 2} {\bf 161}, 1975.

\bibitem{Rand} D. A. Rand. The singularity spectrum $f(\alpha)$ for
cookie-cutters. \emph{Ergd. Th. Dyn. Syst.} {\bf 9}(3) (1989), 527--541.

\bibitem{Ruelle} D. Ruelle. {Thermodynamic formalism. The mathematical
structures of equilibrium statistical mechanics. Second edition}.
{\it Cambridge Mathematical Library}. Cambridge University Press,
Cambridge, 2004.

\bibitem{SchTro} J.~Schmeling and S.~Troubetzkoy.
Inhomogeneous Diophantine approximation and angular recurrence
properties of the billiard flow in certain polygons. \emph{Math. Sbornik} \textbf{194} (2003), 295--309.

\bibitem{Simpelaere} D. Simpelaere. Dimension spectrum of Axiom A diffeomorphisms.
II. Gibbs measures. \emph{J. Statist. Phys.} {\bf 76}(5-6) (1994), 1359--1375.

\bibitem{Walters1978} P. Walters. Invariant Measures and Equilibrium States for Some Mappings which Expand
Distances. \emph{Trans.  Amer.  Math.  Soc.} {\bf 236} (1978), 121--153.

\end{thebibliography}
\end{document}